\providecommand{\U}[1]{\protect\rule{.1in}{.1in}}
\newtheorem{theorem}{Theorem}[section]
\newtheorem{corollary}[theorem]{Corollary}
\newtheorem{lemma}[theorem]{Lemma}
\newtheorem{proposition}[theorem]{Proposition}
\newtheorem{definition}{Definition}[section]
\newtheorem{remark}{Remark}[section]
\newtheorem{example}{Example}
\theoremstyle{definition}
\theoremstyle{remark}
\numberwithin{equation}{section}
\let\pdfoutput=\undefined\fi
\begin{document}
\pagestyle{myheadings}

\begin{center}
{\Large \textbf{Tensor decompositions and tensor equations over quaternion algebra}}\footnote{This research was supported by the grants from the National Natural
Science Foundation of China (11571220).
\par
{}* Corresponding author.
\par  Email address: hzh19871126@126.com (Zhuo-Heng He); cnavasca@uab.edu (C. Navasca); wqw@t.shu.edu.cn (Q.W. Wang);}

\bigskip

{ \textbf{Zhuo-Heng He$^{a,c}$, Carmeliza Navasca$^{b}$, Qing-Wen Wang$^{c,*}$}}

{\small
\vspace{0.25cm}
 $a.$ Department of Mathematics and Statistics, Auburn University, AL 36849-5310, USA\\

 $b.$ Department of Mathematics, University of Alabama at Birmingham, Birmingham, AL 35294, USA\\

 $c.$ Department of Mathematics, Shanghai University, Shanghai 200444, P. R. China}

\end{center}

\begin{quotation}
\noindent\textbf{Abstract:} In this paper, we investigate and discuss in detail the structures of quaternion tensor SVD, quaternion tensor rank decomposition, and $\eta$-Hermitian quaternion tensor decomposition with the isomorphic group structures and Einstein product. Then we give the expression of the Moore-Penrose inverse of a quaternion tensor by using the quaternion tensor SVD. Moreover, we consider a generalized Sylvester quaternion tensor equation. We give some necessary and sufficient conditions for the existence of a solution to the generalized Sylvester quaternion tensor equation in terms of the Moore-Penrose inverses of the quaternion tensors. We also present the expression of the general solution to this tensor equation when it is solvable. As applications of this generalized Sylvester quaternion tensor equation, we derive some necessary and sufficient conditions for the existences of $\eta$-Hermitian solutions to some quaternion tensor equations. We also provide some numerical examples to illustrate our results.
\newline\noindent\textbf{Keywords:} Tensor decomposition; Tensor equation; Quaternion; Moore-Penrose inverse; Solution; \newline%
\noindent\textbf{2010 AMS Subject Classifications:\ }{\small 15A69, 11R52, 15A18, 15A09}\newline
\end{quotation}

\section{\textbf{Introduction}}

Decompositions of of higher-order tensors have found huge applications in signal processing (\cite{comon01}, \cite{lieven01}, \cite{lieven02}, \cite{lieven03}, \cite{lieven04}, \cite{limlh01}, \cite{muti01}, \cite{navasca02}, \cite{Sidiropoulos01}),  data mining (\cite{nliu001}, \cite{bsavas01}, \cite{jsun01}), genomic signals (\cite{orly04}, \cite{alter01}, \cite{alter02}, \cite{alter03}), computer vision (\cite{shashua01}, \cite{VASILESCU01}, \cite{VASILESCU02}), higher-order statistics (\cite{comon03}, \cite{comon04}, \cite{lieven05}), pattern recognition (\cite{YDKim01}, \cite{bsavas02}), chemometrics (\cite{comon02}, \cite{chemical02}), graph analysis \cite{kolda01}, numerical linear algebra (\cite{lieven06}, \cite{lieven07}, \cite{kolda02}, \cite{kolda03}, \cite{tzhang01}), aerospace engineering \cite{Doostan}, and elsewhere. Kolda and Bader \cite{kolda04} provided an overview of the theoretical developments and applications of tensor decompositions in 2009. There have been many papers discussing tensor decompositions and other fields of tensor theory (\cite{navasca}-\cite{changkungching03}, \cite{comon05}, \cite{lieven06},\cite{weiym01}, \cite{liwen01}, \cite{limlh02}, \cite{limlh03}, \cite{qi01}-\cite{qi05}, \cite{shaojiayu01}-\cite{shaojiayu04}, \cite{yangyuning02}, \cite{yangyuning01}).

Tensor equations are found to be useful in engineering and science. For instance, tensor equations can be used to model some problems in continuum physics and engineering, isotropic and anisotropic elasticity (\cite{lai01}). There have been some papers using different approaches to investigate tensor equation over fields (\cite{navasca}, \cite{chenzhen01}, \cite{weiym02}, \cite{Sun}).

The concept of quaternions was introduced by W.R. Hamilton in 1843 \cite{hamilton}. Quaternion algebra, which is an associative and noncommutative division algebra over the real number field, can be used to computer science, quantum physics, signal and color image processing, and so on (\cite{S. De Leo}, \cite{Took1}, \cite{Took3}). Quaternions are the generalizations of real numbers and complex numbers.   Quaternion matrix decompositions have always been at the heart of color image processing and signal processing (\cite{N. LE Bihan}, \cite{quaternionapp01}).

In contrast to tensor decomposition and tensor equation over conventional algebra, the tensor decomposition and tensor equation over quaternion algebra are at present far from fully developed. Due to the noncommutativity, one can not directly extend various results on real numbers or complex numbers to quaternions. Hence, the research on tensor decomposition and tensor equation over quaternion algebra have not been more fruitful so far than those over fields. Motivated by the wide application of tensor decomposition, tensor equation and quaternion algebra and in order to improve the theoretical development
of the quaternion tensor theory, we in this paper consider some quaternion tensor decompositions and quaternion tensor equations.

The purpose of this paper is threefold. Firstly, we present the structures of quaternion tensor SVD, quaternion tensor rank decomposition, and decomposition for $\eta$-Hermitian quaternion tensor (see Definition \ref{30defi32}) with the isomorphic group structures and Einstein product. We investigate and discuss in detail the structures of these decompositions. Secondly, we define the Moore-Penrose inverse of even order quaternion tensors with the Einstein product. We give the expression of the Moore-Penrose inverse of an even order tensor by using the quaternion tensor SVD. Finally, we consider some quaternion tensor equations. We give some  necessary and sufficient conditions for the existence of a solution to the following generalized Sylvester quaternion tensor equation
 \begin{align}\label{tensorequ01}
\mathcal{A}*_{N}\mathcal{X}*_{M}\mathcal{B}+\mathcal{C}*_{N}\mathcal{Y}*_{M}\mathcal{D}=\mathcal{E},
\end{align}
in terms of the Moore-Penrose inverses of the coefficient quaternion tensors, where the operation $*_{N}$ is the Einstein product (see Definition \ref{03defi21}), and $\mathcal{A},~\mathcal{B},~\mathcal{C},~\mathcal{D}$ and $\mathcal{E}$ are given quaternion tensors with suitable order. We also present the expression of the general solution to the quaternion tensor equation (\ref{tensorequ01}) when it is solvable. As applications of (\ref{tensorequ01}), we provide some solvability conditions and general $\eta$-Hermitian solutions to some quaternion tensor equations.

\section{\textbf{Preliminaries}}

An order $N$ tensor $\mathcal{A}=(a_{i_{1}\cdots i_{N}})_{1\leq i_{j}\leq I_{j}}~(j=1,\ldots,N)$ is a multidimensional array with $I_{1}I_{2}\cdots I_{N}$ entries.
Let $\mathbb{R}^{I_{1}\times \cdots\times I_{N}},\mathbb{C}^{I_{1}\times \cdots\times I_{N}}$ and $\mathbb{H}^{I_{1}\times \cdots\times I_{N}}$ stand, respectively, for the sets of the order $N$ dimension $I_{1}\times \cdots\times I_{N}$ tensors over the real number field $\mathbb{R}$, the complex number field $\mathbb{C}$ and the real quaternion algebra
\[
\mathbb{H}%
=\big\{q_{0}+q_{1}\mathbf{i}+q_{2}\mathbf{j}+q_{3}\mathbf{k}\big|~\mathbf{i}^{2}=\mathbf{j}^{2}=\mathbf{k}^{2}=\mathbf{ijk}=-1,
q_{0},q_{1},q_{2},q_{3}\in\mathbb{R}\big\}.
\]
It is well known that the quaternion algebra is an associative and noncommutative division algebra. For more definitions and properties of quaternions, we refer the reader to the recent book \cite{rodman} and the survey paper \cite{F. Zhang}. The symbol $\bar{q}$ stands for the conjugate transpose of a quaternion $q=q_{0}+q_{1}\mathbf{i}+q_{2}\mathbf{j}+q_{3}\mathbf{k},q_{0},q_{1},q_{2},q_{3}\in\mathbb{R}$. Clearly, $\bar{q}=q_{0}-q_{1}\mathbf{i}-q_{2}\mathbf{j}-q_{3}\mathbf{k}.$

For a quaternion tensor $\mathcal{A}=(a_{i_{1}\cdots i_{N}j_{1}\cdots j_{M}})\in\mathbb{H}^{I_{1}\times \cdots \times I_{N}\times J_{1}\times \cdots \times J_{M}}$, let
$\mathcal{B}=(b_{i_{1}\cdots i_{M}j_{1}\cdots j_{N}})\in\mathbb{H}^{J_{1}\times \cdots \times J_{M}\times I_{1}\times \cdots \times I_{N}}$ be the conjugate transpose of $\mathcal{A}$, where $b_{i_{1}\cdots i_{M}j_{1}\cdots j_{N}}=\bar{a}_{i_{1}\cdots i_{N}j_{1}\cdots j_{M}}$. The tensor $\mathcal{B}$ is denoted by $\mathcal{A}^{*}$. A ``square'' tensor $\mathcal{A}\in\mathbb{H}^{I_{1}\times \cdots \times I_{N}\times I_{1}\times \cdots \times I_{N}}$ is said to be Hermitian if $\mathcal{A}=\mathcal{A}^{*}$. A ``square'' tensor $\mathcal{D}=(d_{i_{1}\cdots i_{N}i_{1}\cdots i_{N}})\in\mathbb{H}^{I_{1}\times \cdots \times I_{N}\times I_{1}\times \cdots \times I_{N}}$ is called a diagonal tensor if all its entries are zero except for $d_{i_{1}\cdots i_{N}i_{1}\cdots i_{N}}$. If all the diagonal entries $d_{i_{1}\cdots i_{N}i_{1}\cdots i_{N}}=1$, then $\mathcal{D}$ is a unit tensor, denoted by $\mathcal{I}$. The zero tensor with suitable order is denoted by $0$.

The definition of Einstein product is a contracted product which has been used in continuum mechanics \cite{lai01}. At first, we give the definition of Einstein product.

\begin{definition} [Einstein product] \cite{einstein}\label{03defi21}
For $\mathcal{A}\in\mathbb{H}^{I_{1}\times \cdots \times I_{N}\times J_{1}\times \cdots \times J_{N}}$ and $\mathcal{B}\in\mathbb{H}^{J_{1}\times \cdots \times J_{N}\times K_{1}\times \cdots \times K_{M}}$, the Einstein product of tensors $\mathcal{A}$ and $\mathcal{B}$ is defined by the operation $*_{N}$ via
\begin{align}
(\mathcal{A}*_{N}\mathcal{B})_{i_{1}\cdots i_{N}k_{1}\cdots k_{M}}=\sum_{j_{1}\cdots j_{N}}a_{i_{1}\cdots i_{N}j_{1}\cdots j_{N}}b_{j_{1}\cdots j_{N}k_{1}\cdots k_{M}},
\end{align}
where $\mathcal{A}*_{N}\mathcal{B}\in\mathbb{H}^{I_{1}\times \cdots \times I_{N}\times K_{1}\times \cdots \times K_{M}}$. The associative law of this tensor product holds.
\end{definition}

It is easy to infer the following results.
\begin{proposition}
Let $\mathcal{A}\in\mathbb{H}^{I_{1}\times \cdots \times I_{N}\times J_{1}\times \cdots \times J_{N}}$ and $\mathcal{B}\in\mathbb{H}^{J_{1}\times \cdots \times J_{N}\times K_{1}\times \cdots \times K_{M}}$. Then\\
$(1)$ $(\mathcal{A}*_{N}\mathcal{B})^{*}=\mathcal{B}^{*}*_{N}\mathcal{A}^{*}$;\\
$(2)$ $\mathcal{I}_{N}*_{N}\mathcal{B}=\mathcal{B}$ and $\mathcal{B}*_{M}\mathcal{I}_{M}=\mathcal{B}$, where unit tensors
$\mathcal{I}_{N}\in\mathbb{H}^{J_{1}\times \cdots \times J_{N}\times J_{1}\times \cdots \times J_{N}}$ and
$\mathcal{I}_{M}\in\mathbb{H}^{K_{1}\times \cdots \times K_{N}\times K_{1}\times \cdots \times K_{N}}$.
\end{proposition}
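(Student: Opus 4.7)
The plan is to verify each identity by a direct index computation from the Einstein-product definition, being careful with the order of factors because $\mathbb{H}$ is noncommutative.

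For part $(1)$, I would first write the $(k_{1}\cdots k_{M}i_{1}\cdots i_{N})$-entry of $(\mathcal{A}*_{N}\mathcal{B})^{*}$ as the quaternion conjugate of $(\mathcal{A}*_{N}\mathcal{B})_{i_{1}\cdots i_{N}k_{1}\cdots k_{M}}$, which by Definition \ref{03defi21} equals
\[
\overline{\sum_{j_{1}\cdots j_{N}} a_{i_{1}\cdots i_{N}j_{1}\cdots j_{N}}\, b_{j_{1}\cdots j_{N}k_{1}\cdots k_{M}}}.
\]
The key step is to push the conjugation inside the sum and apply the anti-involution property $\overline{pq}=\bar q\,\bar p$ on $\mathbb{H}$, which reverses the order of the two factors. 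After doing this, the resulting sum is precisely
\[
\sum_{j_{1}\cdots j_{N}} \bar b_{j_{1}\cdots j_{N}k_{1}\cdots k_{M}}\, \bar a_{i_{1}\cdots i_{N}j_{1}\cdots j_{N}},
\]
which, upon recognizing that $\bar b_{j_{1}\cdots j_{N}k_{1}\cdots k_{M}}$ is the $(k_{1}\cdots k_{M}j_{1}\cdots j_{N})$-entry of $\mathcal{B}^{*}$ and $\bar a_{i_{1}\cdots i_{N}j_{1}\cdots j_{N}}$ is the $(j_{1}\cdots j_{N}i_{1}\cdots i_{N})$-entry of $\mathcal{A}^{*}$, is the $(k_{1}\cdots k_{M}i_{1}\cdots i_{N})$-entry of $\mathcal{B}^{*}*_{N}\mathcal{A}^{*}$. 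The main obstacle (really the only subtle point) is to apply the conjugation anti-homomorphism correctly, since on $\mathbb{H}$ the order of the quaternion factors must be reversed; this is precisely what forces the reversal in the tensor product identity.

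For part $(2)$, the argument is purely combinatorial. Writing the entries of $\mathcal{I}_{N}$ as $\delta_{i_{1}j_{1}}\cdots\delta_{i_{N}j_{N}}$, the definition of $*_{N}$ gives
\[
(\mathcal{I}_{N}*_{N}\mathcal{B})_{i_{1}\cdots i_{N}k_{1}\cdots k_{M}}
=\sum_{j_{1}\cdots j_{N}}\delta_{i_{1}j_{1}}\cdots\delta_{i_{N}j_{N}}\, b_{j_{1}\cdots j_{N}k_{1}\cdots k_{M}}
=b_{i_{1}\cdots i_{N}k_{1}\cdots k_{M}},
\]
so $\mathcal{I}_{N}*_{N}\mathcal{B}=\mathcal{B}$. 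The identity $\mathcal{B}*_{M}\mathcal{I}_{M}=\mathcal{B}$ follows by the mirror computation, collapsing the $K$-indices instead. There is no noncommutativity issue here because each Kronecker factor is real and may be commuted past the quaternion entry $b_{j_{1}\cdots j_{N}k_{1}\cdots k_{M}}$.

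Altogether, the proof reduces to two unpackings of the Einstein-product definition; the only point requiring attention is the order reversal in $(1)$ caused by the anti-involution on $\mathbb{H}$. I expect the write-up to be short and essentially formal once the index bookkeeping is set up cleanly.
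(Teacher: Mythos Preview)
Your proposal is correct; the paper itself omits the proof entirely, stating only ``It is easy to infer the following results,'' so your direct entrywise verification via the Einstein-product definition---with the order reversal in $(1)$ coming from the anti-involution $\overline{pq}=\bar q\,\bar p$ on $\mathbb{H}$ and the Kronecker-delta collapse in $(2)$---is exactly the routine computation the authors had in mind.
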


In 2013, M. Brazell, N. Li, C. Navasca, C. Tamon \cite{navasca} gave a transformation $f$ between the tensor and matrix over real number field. We can give the same definition of the transformation $f$ between the tensor and matrix over quaternion algebra.

\begin{definition} [transformation]\cite{navasca}\label{21deff}  Define the transformation $f:\mathbb{T}_{I_{1},I_{2},\cdots,I_{N},J_{1},J_{2},\cdots,J_{N}}(\mathbb{H})
\rightarrow\mathbb{M}_{I_{1}\cdot I_{2}\cdots I_{N-1}\cdot I_{N},J_{1}\cdot J_{2}\cdots J_{N-1}\cdot J_{N}}(\mathbb{H})$
 with $f(\mathcal{A})=A$ defined component-wise as
\begin{equation}
(\mathcal{A})_{i_{1}i_{2}\cdots i_{n}j_{1}j_{2}\cdots j_{n}}
\xrightarrow{f}(A)_{[i_{1}+\sum_{k=2}^{N}(i_{k}-1)\prod_{s=1}^{k-1}I_{s}]
[j_{1}+\sum_{k=2}^{N}(j_{k}-1)\prod_{s=1}^{k-1}J_{s}]},
\label{eq:transformation}
\end{equation}
where $\mathcal{A}\in \mathbb{T}_{I_{1},I_{2},\ldots,I_{N},J_{1},J_{2},\ldots,J_{N}}(\mathbb{H})$ and $A\in\mathbb{M}_{I_{1}\cdot I_{2} \cdots I_{N}, J_{1} \cdot J_{2} \cdots J_{N}}(\mathbb{H})$.
That is, the element is one-to-one correspondence between the quaternion matrix $A$ and the quaternion tensor $\mathcal{A}.$
\end{definition}

\begin{example}
Consider the $2\times 2\times 2\times 3$-quaternion tensor $\mathcal{A}$ defined by
\begin{align*}
a_{1111}=\mathbf{i},~a_{1112}=\mathbf{j},~a_{1113}=-\mathbf{i}+\mathbf{j},~a_{1121}=\mathbf{k},~a_{1122}=1+\mathbf{i},~a_{1123}=2\mathbf{j}+\mathbf{k},
\end{align*}
\begin{align*}
a_{1211}=\mathbf{i}+\mathbf{k},~a_{1212}=2\mathbf{i}+\mathbf{k},~a_{1213}=3\mathbf{k},~a_{1221}=\mathbf{j}-\mathbf{k},~a_{1222}=1+\mathbf{k},
~a_{1223}=3\mathbf{i}-2\mathbf{j},
\end{align*}
\begin{align*}
a_{2111}=\mathbf{k},~a_{2112}=3\mathbf{j},~a_{2113}=2-\mathbf{j},~a_{2121}=\mathbf{j}-\mathbf{k},~a_{2122}=1+\mathbf{j}+\mathbf{k},
~a_{2123}=\mathbf{j}+2\mathbf{k},
\end{align*}
\begin{align*}
a_{2211}=1+2\mathbf{i}+3\mathbf{j}-\mathbf{k},~a_{2212}=1+\mathbf{j}+\mathbf{k},~a_{2213}=2\mathbf{k},~a_{2221}=\mathbf{k},~a_{2222}=-\mathbf{i},
~a_{2223}=3+\mathbf{j}.
\end{align*}
Then
\begin{align*}
f(\mathcal{A})&=
\begin{pmatrix}
a_{1111}&a_{1121}&a_{1112}&a_{1122}&a_{1113}&a_{1123}\\
a_{2111}&a_{2121}&a_{2112}&a_{2122}&a_{2113}&a_{2123}\\
a_{1211}&a_{1221}&a_{1212}&a_{1222}&a_{1213}&a_{1223}\\
a_{2211}&a_{2221}&a_{2212}&a_{2222}&a_{2213}&a_{2223}
\end{pmatrix}
\\&=
\begin{pmatrix}
\mathbf{i}&\mathbf{k}&\mathbf{j}&1+\mathbf{i}&-\mathbf{i}+\mathbf{j}&2\mathbf{j}+\mathbf{k}\\
\mathbf{k}&\mathbf{j}-\mathbf{k}&3\mathbf{j}&1+\mathbf{j}+\mathbf{k}&2-\mathbf{j}&\mathbf{j}+2\mathbf{k}\\
\mathbf{i}+\mathbf{k}&\mathbf{j}-\mathbf{k}&2\mathbf{i}+\mathbf{k}&1+\mathbf{k}&3\mathbf{k}&3\mathbf{i}-2\mathbf{j}\\
1+2\mathbf{i}+3\mathbf{j}-\mathbf{k}&\mathbf{k}&1+\mathbf{j}+\mathbf{k}&-\mathbf{i}&2\mathbf{k}&3+\mathbf{j}
\end{pmatrix}.
\end{align*}
\end{example}

We now consider the properties of the map $f$ defined in Definition \ref{21deff}. M. Brazell, N. Li, C. Navasca, C. Tamon \cite{navasca} discussed several consequential results from the group structure and gave a proof of fourth order tensors over the real number field. By the similar method, we can give the following properties of the map $f$ over quaternion algebra.

\begin{lemma}\label{03lemma22}
 Let $\mathcal{A}\in\mathbb{H}^{I_{1}\times I_{2}\times\cdots I_{N}\times J_{1}\times J_{2}\times\cdots\times J_{N}}$,
$\mathcal{B}\in\mathbb{H}^{J_{1}\times J_{2}\times\cdots\times J_{N}\times L_{1}\times L_{2}\times\cdots\times L_{N}}$
and $f$ be the map defined in (\ref{eq:transformation}).
 Then the following properties hold:\\
\quad \quad \hspace{1cm} 1. The map $f$ is a bijection.
  Moreover, there exists a bijective inverse map $f^{-1}$:
  \begin{align}
  \mathbb{M}_{I_{1}\cdot I_{2}\cdots I_{N-1}\cdot I_{N},J_{1}\cdot J_{2}\cdots J_{N-1}\cdot J_{N}}(\mathbb{H})
  \xrightarrow{f^{-1}}\mathbb{T}_{I_{1},I_{2},\cdots,I_{N},J_{1},J_{2},\cdots,J_{N}}(\mathbb{H})
  \end{align}
2. The map satisfies $f(\mathcal{A}*_{N}\mathcal{B})=f(\mathcal{A})\cdot f(\mathcal{B})$,
  where $\cdot$ refers to the usual matrix multiplication.
\end{lemma}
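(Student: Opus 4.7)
The plan is to reduce both claims to a single combinatorial fact: for any positive integers $I_1,\ldots,I_N$, the map
\[
\varphi_I:(i_1,\ldots,i_N)\longmapsto i_1+\sum_{k=2}^{N}(i_k-1)\prod_{s=1}^{k-1}I_s
\]
is a bijection between $\{1,\ldots,I_1\}\times\cdots\times\{1,\ldots,I_N\}$ and $\{1,\ldots,\prod_{k=1}^N I_k\}$. This is the standard mixed-radix indexing. Once this is established, every other assertion in the lemma becomes book-keeping, and the proof closely parallels the real-valued argument of Brazell--Li--Navasca--Tamon; I would simply verify that nothing in their reasoning uses commutativity of the scalars.

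For Part 1, I would prove bijectivity of $\varphi_I$ by induction on $N$ (equivalently, by the uniqueness of mixed-radix expansions): if two tuples give the same value, reducing modulo $I_1$ forces $i_1=i'_1$, and subtracting and dividing by $I_1$ reduces to the analogous problem with $N-1$ factors. Both the domain and the codomain have $\prod_k I_k$ elements, so injectivity yields bijectivity. Applying $\varphi_I$ to the ``row'' multi-index and the analogous $\varphi_J$ to the ``column'' multi-index defines $f$ entry-wise; the componentwise inverse map $f^{-1}$ is obtained by inverting these mixed-radix expansions.

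For Part 2, I would compute both sides entry-wise and match them using the bijection from Part 1. By definition,
\[
(\mathcal{A}*_{N}\mathcal{B})_{i_{1}\cdots i_{N}l_{1}\cdots l_{N}}=\sum_{j_{1},\ldots,j_{N}}a_{i_{1}\cdots i_{N}j_{1}\cdots j_{N}}\,b_{j_{1}\cdots j_{N}l_{1}\cdots l_{N}},
\]
so under $f$ this becomes the $(\varphi_I(i_1,\ldots,i_N),\varphi_L(l_1,\ldots,l_N))$ entry of $f(\mathcal{A}*_N\mathcal{B})$. On the other side, the $(I,L)$ entry of $f(\mathcal{A})\cdot f(\mathcal{B})$ is $\sum_{J=1}^{J_1\cdots J_N}[f(\mathcal{A})]_{IJ}\,[f(\mathcal{B})]_{JL}$, and the bijection $\varphi_J$ lets me reindex this sum as one over $(j_1,\ldots,j_N)$, with $[f(\mathcal{A})]_{I\varphi_J(\mathbf{j})}=a_{i_1\cdots i_N j_1\cdots j_N}$ and $[f(\mathcal{B})]_{\varphi_J(\mathbf{j})L}=b_{j_1\cdots j_N l_1\cdots l_N}$. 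Equality then holds term by term.

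There is essentially no mathematical obstacle; the only point that requires extra vigilance compared with the real-valued case is the noncommutativity of $\mathbb{H}$. I would explicitly note that the Einstein-product sum places the $\mathcal{A}$-entry to the left of the $\mathcal{B}$-entry, and the matrix-product sum does the same, so the factor order is preserved when the bijection is applied and noncommutativity causes no difficulty. This is the one line where the proof is not a verbatim copy of the $\mathbb{R}$ argument.
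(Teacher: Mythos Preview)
Your proposal is correct and follows essentially the same route as the paper: establish that the mixed-radix index maps are bijections, then compare both sides of $f(\mathcal{A}*_N\mathcal{B})=f(\mathcal{A})\cdot f(\mathcal{B})$ entry-wise by reindexing the matrix-product sum over $J$ as a sum over tuples $(j_1,\ldots,j_N)$. Your treatment of Part~1 is in fact more complete than the paper's (which essentially asserts bijectivity), and your explicit remark that the $\mathcal{A}$-entry appears to the left of the $\mathcal{B}$-entry on both sides is a worthwhile addition that the paper omits.
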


\begin{proof}
Here we denote $\langle N\rangle=\{1,2,\cdots,N\}$ and its cardinality as $|N|$.\\
(1) By the definition $f$,
we can define three maps $h_{1},h_{2},$ and $h_{3}$, that is, $h_{1}:\langle I_{1}\rangle\times  \langle I_{2}\rangle\times\cdots\times \langle I_{N}\rangle
\rightarrow \langle I_{1}I_{2}\cdots I_{N}\rangle$ by
\begin{align}
h_{1}(i_{1},i_{2},\cdots,i_{N})=i_{1}+\sum_{k=2}^{N}(i_{k}-1)\prod_{s=1}^{k-1}I_{s},
\end{align} $h_{2}:\langle J_{1}\rangle\times \langle J_{2}\rangle\times\cdots\times \langle J_{N}\rangle
\rightarrow \langle J_{1}J_{2}\cdots J_{N}\rangle$ by
\begin{align}
h_{2}(j_{1},j_{2},\cdots,j_{N})=j_{1}+\sum_{k=2}^{N}(j_{k}-1)\prod_{s=1}^{k-1}J_{s},
\end{align}
$h_{3}: \langle L_{1}\rangle\times \langle L_{2}\rangle\times\cdots\times \langle L_{N}\rangle
\rightarrow \langle L_{1}L_{2}\cdots L_{N}\rangle$ by
\begin{align}
h_{3}(l_{1},l_{2},\cdots,l_{N})=l_{1}+\sum_{k=2}^{N}(l_{k}-1)\prod_{s=1}^{k-1}L_{s}.
\end{align}
Clearly, the maps $h_{1},h_{2},$ and $h_{3}$ are bijections since $f$ is a bijection.\\
(2) Since $f$ is a bijection,
for some $1\leq i\leq I_{1}I_{2}\cdots I_{N}, 1\leq l\leq L_{1}L_{2}\cdots L_{N},$
there exist some unique indices $i_{1},i_{2},\cdots,i_{N},l_{1},l_{2},\cdots,l_{N}$
for $1\leq i_{1}\leq I_{1}, 1\leq i_{2}\leq I_{2},\cdots, 1\leq i_{N}\leq I_{N},
1\leq l_{1}\leq L_{1}, 1\leq l_{2}\leq L_{2},\cdots, 1\leq l_{N}\leq L_{N}$
such that
\begin{align}
i_{1}+\sum_{k=2}^{N}(i_{k}-1)\prod_{s=1}^{k-1}I_{s}=i
\end{align}
and
\begin{align}
l_{1}+\sum_{k=2}^{N}(l_{k}-1)\prod_{s=1}^{k-1}L_{s}=l.
\end{align} Hence, we have that
\begin{equation*}
[f(\mathcal{A}*_{N}\mathcal{B})]_{il}
=(\mathcal{A}*_{N}\mathcal{B})_{i_{1}i_{2}\cdots i_{N}l_{1}l_{2}\cdots l_{N}}
=\sum_{j_{1}j_{2}\cdots j_{N}}a_{i_{1}i_{2}\cdots i_{N}j_{1}j_{2}\cdots j_{N}}
b_{j_{1}j_{2}\cdots j_{N}l_{1}l_{2}\cdots l_{N}},
\end{equation*}
\begin{equation*}
[f(\mathcal{A})\cdot f(\mathcal{B})]_{il}=\sum_{j=1}^{|J_{1}J_{2}\cdots J_{N}|}[f(\mathcal{A})]_{ij}[f(\mathcal{B})]_{jl}.
\end{equation*}
For every $1\leq j\leq J_{1}J_{2}\cdots J_{N}$, there exist some unique $j_{1},j_{2},\cdots,j_{N}$
such that
\begin{align}
j_{1}+\sum_{k=2}^{N}(j_{k}-1)\prod_{s=1}^{k-1}J_{s}=j.
\end{align} Then, we have that
\begin{equation*}
\sum_{j_{1}j_{2}\cdots j_{N}}a_{i_{1}i_{2}\cdots i_{N}j_{1}j_{2}\cdots j_{N}}b_{j_{1}j_{2}\cdots j_{N}l_{1}l_{2}\cdots l_{N}}
=\sum_{j=1}^{|J_{1}J_{2}\cdots J_{N}|}[f(\mathcal{A})]_{ij}[f(\mathcal{B})]_{jl}.
\end{equation*}Hence, $f(\mathcal{A}*_{N}\mathcal{B})=f(\mathcal{A})\cdot f(\mathcal{B})$.

\end{proof}

It follows from Lemma \ref{03lemma22} that the Einstein product can be defined through the transformation:
\begin{equation}
\mathcal{A}*_{N}\mathcal{B}=f^{-1}[f(\mathcal{A}*_{N}\mathcal{B})]=f^{-1}[f(\mathcal{A})\cdot f(\mathcal{B})].
\end{equation}
Consequently, the inverse map $f^{-1}$ satisfies
\begin{equation}\label{21equ211}
f^{-1}(A\cdot B)=f^{-1}(A)*_{N}f^{-1}(B),
\end{equation}where $A$ and $B$ are quaternion matrices with appropriate sizes.

The following lemma appeared in \cite{navasca} over the real number field. Using similar methods, we can extend it to the quaternion algebra.
\begin{lemma}\label{21lemma23}
\cite{navasca} Suppose $(\mathbb{M},\cdot)$ is a group. Let $f:\mathbb{T}\rightarrow \mathbb{M}$
be any bijection. Then we can define a group structure on $\mathbb{T}$ by defining
\begin{equation*}
\mathcal{A}*_{N}\mathcal{B}=f^{-1}[f(\mathcal{A})\cdot f(\mathcal{B})]
\end{equation*}
for all $\mathcal{A},\mathcal{B}\in\mathbb{T}$. Moreover, the mapping $f$ is an isomorphism.
\end{lemma}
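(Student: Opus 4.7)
The plan is a standard transport-of-structure argument: use the bijection $f$ to pull the group operation of $(\mathbb{M},\cdot)$ back to $\mathbb{T}$, verify the four group axioms on $\mathbb{T}$, and then read off that $f$ is a homomorphism by construction. Since $f$ is already a bijection, it will automatically be an isomorphism.

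First I would check that $*_N$ is well-defined and closed on $\mathbb{T}$: for any $\mathcal{A},\mathcal{B}\in\mathbb{T}$, both $f(\mathcal{A})$ and $f(\mathcal{B})$ lie in $\mathbb{M}$, so $f(\mathcal{A})\cdot f(\mathcal{B})\in\mathbb{M}$, and $f^{-1}$ sends it back into $\mathbb{T}$. For associativity, given $\mathcal{A},\mathcal{B},\mathcal{C}\in\mathbb{T}$, I would compute
\[
(\mathcal{A}*_N\mathcal{B})*_N\mathcal{C}
= f^{-1}\bigl[f(f^{-1}[f(\mathcal{A})\cdot f(\mathcal{B})])\cdot f(\mathcal{C})\bigr]
= f^{-1}\bigl[(f(\mathcal{A})\cdot f(\mathcal{B}))\cdot f(\mathcal{C})\bigr],
\]
using $f\circ f^{-1}=\mathrm{id}_{\mathbb{M}}$, and similarly $\mathcal{A}*_N(\mathcal{B}*_N\mathcal{C})=f^{-1}[f(\mathcal{A})\cdot(f(\mathcal{B})\cdot f(\mathcal{C}))]$. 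Associativity of $\cdot$ in $\mathbb{M}$ then forces the two expressions to coincide.

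Next I would produce the identity and inverses. Let $e$ be the identity of $\mathbb{M}$ and set $\mathcal{I}_{\mathbb{T}}:=f^{-1}(e)\in\mathbb{T}$. For any $\mathcal{A}\in\mathbb{T}$ the computation
\[
\mathcal{A}*_N\mathcal{I}_{\mathbb{T}}
= f^{-1}[f(\mathcal{A})\cdot e]
= f^{-1}[f(\mathcal{A})]
= \mathcal{A},
\]
and symmetrically $\mathcal{I}_{\mathbb{T}}*_N\mathcal{A}=\mathcal{A}$, shows $\mathcal{I}_{\mathbb{T}}$ is a two-sided identity. For each $\mathcal{A}\in\mathbb{T}$, let $g$ be the inverse of $f(\mathcal{A})$ in $\mathbb{M}$ and set $\mathcal{A}^{-1}:=f^{-1}(g)$; then
\[
\mathcal{A}*_N\mathcal{A}^{-1} = f^{-1}[f(\mathcal{A})\cdot g] = f^{-1}(e) = \mathcal{I}_{\mathbb{T}},
\]
and likewise on the other side. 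Hence $(\mathbb{T},*_N)$ is a group.

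Finally, the isomorphism claim is immediate from the definition: applying $f$ to both sides of $\mathcal{A}*_N\mathcal{B}=f^{-1}[f(\mathcal{A})\cdot f(\mathcal{B})]$ gives $f(\mathcal{A}*_N\mathcal{B})=f(\mathcal{A})\cdot f(\mathcal{B})$, so $f$ is a homomorphism, and it was assumed bijective, hence an isomorphism. There is no real technical obstacle here; the only thing to watch is notational discipline in the associativity calculation, where one must repeatedly apply $f\circ f^{-1}=\mathrm{id}_{\mathbb{M}}$ to collapse nested expressions before invoking associativity in $\mathbb{M}$.
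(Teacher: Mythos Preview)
Your proof is correct and is exactly the standard transport-of-structure argument one expects here. The paper itself does not give a proof of this lemma at all; it simply quotes the result from \cite{navasca}, so there is nothing to compare against beyond noting that your verification of closure, associativity, identity, inverses, and the homomorphism property is complete and accurate.
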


\section{\textbf{Decompositions for quaternion tensors via isomorphic group structures}}

In this section, we investigate some decompositions for quaternion tensors via isomorphic group structures. This section is organized as follows:

\begin{itemize}
 \item \S 3.1.  Give the SVD for a quaternion tensor.
 \item \S 3.2.   Give the rank decompositions for quaternion tensors.
 \item \S 3.2.  Give a decomposition for an $\eta$-Hermitian quaternion tensor.
\end{itemize}

\subsection{\textbf{SVD for quaternion tensors}}

In this section, we consider the singular value decomposition for quaternion tensors via isomorphic group structures. The SVD for quaternion matrices was given in \cite{F. Zhang}.
\begin{lemma} \cite{F. Zhang} (Quaternion matrix SVD)\label{15lemma31}
Let $A\in \mathbb{H}^{m\times n}$ be of rank $r$. Then there exist unitary quaternion matrices $U\in \mathbb{H}^{m\times m}$ and $V\in \mathbb{H}^{n\times n}$ such that
\begin{align}
A=U\begin{pmatrix}D_{r}&0\\0&0\end{pmatrix}V^{*},
\end{align}where $D_{r}=diag(d_{1},\ldots,d_{r})$ and the $d_{i} ~ (i=1,\ldots,r)$ are real positive singular values of $A$.
\end{lemma}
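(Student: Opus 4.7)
The plan is to reduce this to the spectral theorem for Hermitian quaternion matrices, mirroring the classical complex SVD argument but being careful about the non-commutativity of $\mathbb{H}$.

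First I would form $A^{*}A \in \mathbb{H}^{n\times n}$, which is Hermitian and positive semi-definite (for any $x\in\mathbb{H}^{n}$, $x^{*}(A^{*}A)x = (Ax)^{*}(Ax) \geq 0$, with conjugation acting as in the definition in \S 2). Invoking the spectral theorem for Hermitian quaternion matrices (a standard result; see \cite{F. Zhang} or \cite{rodman}), there is a unitary $V\in\mathbb{H}^{n\times n}$ and real non-negative scalars $\lambda_{1}\geq\cdots\geq\lambda_{n}\geq 0$ such that $V^{*}(A^{*}A)V = \mathrm{diag}(\lambda_{1},\ldots,\lambda_{n})$. Because the quaternionic rank of $A^{*}A$ equals the rank $r$ of $A$, exactly the first $r$ of these scalars are strictly positive; set $d_{i}=\sqrt{\lambda_{i}}$ and $D_{r}=\mathrm{diag}(d_{1},\ldots,d_{r})$.

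Next I would split $V=[V_{1}\ V_{2}]$ with $V_{1}\in\mathbb{H}^{n\times r}$ carrying the positive-eigenvalue columns, and define
\[
U_{1} = A V_{1} D_{r}^{-1} \in \mathbb{H}^{m\times r}.
\]
A direct computation using $V_{1}^{*}A^{*}AV_{1}=D_{r}^{2}$ and the reality (hence centrality) of the entries of $D_{r}^{-1}$ gives $U_{1}^{*}U_{1}=D_{r}^{-1}V_{1}^{*}A^{*}AV_{1}D_{r}^{-1}=I_{r}$, so $U_{1}$ has orthonormal columns in the quaternion sense. I would also check that $AV_{2}=0$: from $V_{2}^{*}A^{*}AV_{2}=0$ one obtains $(AV_{2})^{*}(AV_{2})=0$, and positive semi-definiteness of the entrywise inner product forces $AV_{2}=0$. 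Then I would extend $U_{1}$ to a full unitary $U=[U_{1}\ U_{2}]\in\mathbb{H}^{m\times m}$ by completing an orthonormal basis (this uses a quaternion Gram–Schmidt, which works because $\mathbb{H}$ is a division algebra so normalization $u/\|u\|$ by a real positive scalar is legal).

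Finally, I would verify the factorization by computing $U\begin{pmatrix}D_{r}&0\\0&0\end{pmatrix}V^{*} = U_{1}D_{r}V_{1}^{*} = AV_{1}V_{1}^{*}$, and observing that $V_{1}V_{1}^{*}+V_{2}V_{2}^{*}=I_{n}$ together with $AV_{2}=0$ yields $AV_{1}V_{1}^{*}=A$. The positivity of the $d_{i}$ is automatic from their definition as square roots of positive reals. The main obstacle is the quaternion spectral theorem itself—specifically confirming that eigenvalues of a Hermitian quaternion matrix are real and that orthonormal eigenbases exist; once this is in hand, every other step is a routine translation of the classical proof, taking care only that scalars on the right of quaternion vectors must be real (as they are here, since $D_{r}$ and $D_{r}^{-1}$ are real diagonal).
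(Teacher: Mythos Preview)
Your argument is correct and follows the standard route to the SVD via the spectral theorem for $A^{*}A$; every step you outline goes through over $\mathbb{H}$ for the reasons you state (the diagonal scalars are real, hence central, and Gram--Schmidt works in a division algebra). Note, however, that the paper does not actually prove this lemma: it is quoted from \cite{F. Zhang} as a known result and used as a black box to obtain the tensor SVD in Theorem~\ref{23theorem32}. So there is no ``paper's own proof'' to compare against; your proposal supplies a valid proof where the paper simply cites one.
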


Now we give the definition of the unitary quaternion tensor.

\begin{definition}
[unitary quaternion tensor] A tensor $\mathcal{U}\in\mathbb{H}^{I_{1}\times \cdots \times I_{N}\times I_{1}\times \cdots \times I_{N}}$ is unitary if
$\mathcal{U}*_{N}\mathcal{U}^{*}=\mathcal{U}^{*}*_{N}\mathcal{U}=\mathcal{I}.$
\end{definition}

We give quaternion tensor SVD.
\begin{theorem}(Quaternion tensor SVD)\label{23theorem32}
Let $\mathcal{A}\in\mathbb{H}^{I_{1}\times \cdots \times I_{N}\times J_{1}\times \cdots \times J_{N}}$ with $r=\mbox{rank} (f(\mathcal{A}))$, where $f$ is the transformation in (\ref{eq:transformation}).  Then there exist
 unitary tensors $\mathcal{U}\in\mathbb{H}^{I_{1}\times \cdots \times I_{N}\times I_{1}\times \cdots \times I_{N}}$ and $\mathcal{V}\in\mathbb{H}^{J_{1}\times \cdots \times J_{N}\times J_{1}\times \cdots \times J_{N}}$ such that
\begin{align}
\mathcal{A}=\mathcal{U}*_{N}\mathcal{B}*_{N}\mathcal{V}^{*},
\end{align}where the tensor $\mathcal{B}\in\mathbb{R}^{I_{1}\times \cdots \times I_{N}\times J_{1}\times \cdots \times J_{N}}$ has the following structure
\begin{align}\label{23equ033}
\mathcal{B}_{i_{1}\cdots  i_{N}j_{1}\cdots j_{N}}=
 \left\{\begin{array}{c}
 d_{i}, ~\mbox{if}~(i_{1}\cdots  i_{N}j_{1}\cdots j_{N})=(p_{1}^{i}\cdots  p_{N}^{i}q_{1}^{i}\cdots q_{N}^{i}),\\
 0, \mbox{otherwise},
 \end{array}
  \right.
\end{align}
$d_{i}~ (i=1,\ldots,r)$ are the positive singular values of the quaternion matrix $f(\mathcal{A})$ and
\begin{align}\label{23equ034}
p_{N}^{i}=\left[\frac{i-1}{\prod_{s=1}^{N-1}I_{s}}\right]+1,
~
p_{N-1}^{i}=\left[\frac{i-1-(p_{N}^{i}-1)\prod_{s=1}^{N-1}I_{s}}{\prod_{s=1}^{N-2}I_{s}}\right]+1,
\end{align}
\begin{align}
\vdots
\end{align}
\begin{align}
p_{t}^{i}=\left[\frac{i-1-\sum_{k=t+1}^{N}(p_{k}^{i}-1)\prod_{s=1}^{k-1}I_{s}}{\prod_{s=1}^{t-1}I_{s}}\right]+1,
\end{align}
\begin{align}
\vdots
\end{align}
\begin{align}
p_{2}^{i}=\left[\frac{i-1-\sum_{k=3}^{N}(p_{k}^{i}-1)\prod_{s=1}^{k-1}I_{s}}{I_{1}}\right]+1,
~
p_{1}^{i}=i-\sum_{k=2}^{N}(p_{k}^{i}-1)\prod_{s=1}^{k-1}I_{s},
\end{align}
\begin{align}
q_{N}^{i}=\left[\frac{i-1}{\prod_{s=1}^{N-1}J_{s}}\right]+1,
~
q_{N-1}^{i}=\left[\frac{i-1-(q_{N}^{i}-1)\prod_{s=1}^{N-1}J_{s}}{\prod_{s=1}^{N-2}J_{s}}\right]+1,
\end{align}
\begin{align}
\vdots
\end{align}
\begin{align}
q_{t}^{i}=\left[\frac{i-1-\sum_{k=t+1}^{N}(q_{k}^{i}-1)\prod_{s=1}^{k-1}J_{s}}{\prod_{s=1}^{t-1}J_{s}}\right]+1,
\end{align}
\begin{align}
\vdots
\end{align}
\begin{align}\label{23equ313}
q_{2}^{i}=\left[\frac{i-1-\sum_{k=3}^{N}(q_{k}^{i}-1)\prod_{s=1}^{k-1}J_{s}}{J_{1}}\right]+1,
~
q_{1}^{i}=i-\sum_{k=2}^{N}(q_{k}^{i}-1)\prod_{s=1}^{k-1}J_{s},
\end{align}
and $\left[ a \right]$ is the largest integer less than or equal to the real number $a$.
\end{theorem}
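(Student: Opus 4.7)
The plan is to transport the quaternion matrix SVD of Lemma~\ref{15lemma31} back through the isomorphism $f$ of Lemma~\ref{21lemma23}. Since $f$ intertwines the Einstein product with ordinary matrix multiplication (Lemma~\ref{03lemma22}), any multiplicative factorization of $f(\mathcal{A})$ translates, via $f^{-1}$, to a $*_{N}$-factorization of $\mathcal{A}$, reducing the theorem to the matrix case plus some index bookkeeping.

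Concretely, I would set $A=f(\mathcal{A})$, which by hypothesis has rank $r$. Lemma~\ref{15lemma31} supplies unitary quaternion matrices $U,V$ and a rectangular diagonal matrix $D=\begin{pmatrix}D_{r}&0\\0&0\end{pmatrix}$ with positive entries $d_{1},\dots,d_{r}$ on its diagonal such that $A=UDV^{*}$. I then define $\mathcal{U}=f^{-1}(U)$, $\mathcal{V}=f^{-1}(V)$, and $\mathcal{B}=f^{-1}(D)$. Two applications of (\ref{21equ211}) give
\begin{equation*}
\mathcal{A}=f^{-1}(U\cdot D\cdot V^{*})=\mathcal{U}*_{N}\mathcal{B}*_{N}f^{-1}(V^{*}).
\end{equation*}
A short comparison of the row/column index maps shows $f(\mathcal{A}^{*})=f(\mathcal{A})^{*}$ for every tensor, so $f^{-1}(V^{*})=\mathcal{V}^{*}$. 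Combining this with $f(\mathcal{I})=I$ (immediate from the definition of $f$ applied to the unit tensor) and $UU^{*}=U^{*}U=I$ yields $\mathcal{U}*_{N}\mathcal{U}^{*}=\mathcal{U}^{*}*_{N}\mathcal{U}=\mathcal{I}$, and similarly for $\mathcal{V}$; thus both are unitary tensors.

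The remaining, and only genuinely technical, step is to identify $\mathcal{B}=f^{-1}(D)$ explicitly. The matrix $D$ carries the value $d_{i}$ at position $(i,i)$ for $1\le i\le r$ and zeros elsewhere, so under $f^{-1}$ this position is sent to the tensor index $(p_{1}^{i},\dots,p_{N}^{i},q_{1}^{i},\dots,q_{N}^{i})$, where these indices are obtained by inverting the linear-indexing maps
\begin{equation*}
p_{1}^{i}+\sum_{k=2}^{N}(p_{k}^{i}-1)\prod_{s=1}^{k-1}I_{s}=i,\qquad q_{1}^{i}+\sum_{k=2}^{N}(q_{k}^{i}-1)\prod_{s=1}^{k-1}J_{s}=i.
\end{equation*}
These are mixed-radix representations of $i-1$ with digit bases $I_{1},\dots,I_{N}$ and $J_{1},\dots,J_{N}$, and the formulas (\ref{23equ034})--(\ref{23equ313}) are precisely the standard top-down extraction of such digits using the floor function. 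The main obstacle, and what I would grind through carefully, is verifying by descending induction on $t$ that the recursively defined $p_{t}^{i},q_{t}^{i}$ lie in the correct ranges $1\le p_{t}^{i}\le I_{t}$, $1\le q_{t}^{i}\le J_{t}$, and that they actually satisfy the two identities above. Once this is checked, (\ref{23equ033}) follows and the proof is complete.
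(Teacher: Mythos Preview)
Your proposal is correct and follows essentially the same route as the paper: apply the quaternion matrix SVD (Lemma~\ref{15lemma31}) to $f(\mathcal{A})$, pull the factorization back through $f^{-1}$ using Lemma~\ref{03lemma22}/(\ref{21equ211}), verify unitarity via $f^{-1}(I)=\mathcal{I}$, and then invert the index map $h_{1},h_{2}$ to locate the $d_{i}$'s in $\mathcal{B}$. The paper carries out the last step exactly as you describe---it derives $p_{N}^{i}$ first by a floor-and-contradiction argument (your ``top-down mixed-radix extraction'') and then iterates downward, so your plan matches the paper's proof in both strategy and detail.
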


\begin{proof}
Let $A=f(\mathcal{A})$. It follows from Lemma \ref{15lemma31} that
\begin{align}
A=U\begin{pmatrix}D_{r}&0\\0&0\end{pmatrix}V^{*},
\end{align}where $U$ and $V$ are unitary quaternion matrices, $D_{r}=\mbox{diag} (d_{1},\ldots,d_{r})$ and $d_{i}~ (i=1,\ldots,r)$ are the positive singular values of $A$. From the property of $f$ in (\ref{21equ211}) and Lemma \ref{21lemma23}, we have
\begin{align}
f^{-1}(A)=f^{-1}\left(U\begin{pmatrix}D_{r}&0\\0&0\end{pmatrix}V^{*} \right)=f^{-1}(U)*_{N}\left(f^{-1}\begin{pmatrix}D_{r}&0\\0&0\end{pmatrix}\right)*_{N}f^{-1}(V^{*})
\end{align}
\begin{align}
\Longrightarrow
\mathcal{A}=\mathcal{U}*_{N}\mathcal{B}*_{N}\mathcal{V}^{*},
\end{align}
where $\mathcal{U}=f^{-1}(U)$, $\mathcal{V}^{*}=f^{-1}(V^{*})$ and $\mathcal{B}=f^{-1}\begin{pmatrix}D_{r}&0\\0&0\end{pmatrix}$ is a real tensor whose nonzero entries are $d_{i}$. Note that
\begin{align}
\mathcal{U}*_{N}\mathcal{U}^{*}=f^{-1}(U)*_{N}f^{-1}(U)^{*}=f^{-1}(U)*_{N}f^{-1}(U^{*})=f^{-1}(UU^{*})=f^{-1}(I)=\mathcal{I},
\end{align}
\begin{align}
\mathcal{U}^{*}*_{N}\mathcal{U}=f^{-1}(U)^{*}*_{N}f^{-1}(U)=f^{-1}(U^{*})*_{N}f^{-1}(U)=f^{-1}(U^{*}U)=f^{-1}(I)=\mathcal{I}.
\end{align}
Similarly, we can prove that
\begin{align}
\mathcal{V}*_{N}\mathcal{V}^{*}=\mathcal{V}^{*}*_{N}\mathcal{V}=\mathcal{I}.
\end{align}
Hence, $\mathcal{U}$ and $\mathcal{V}$ are unitary quaternion tensors.

Now we consider the structure of the tensor $\mathcal{B}\in\mathbb{R}^{I_{1}\times \cdots \times I_{N}\times J_{1}\times \cdots \times J_{N}}$. Note that
\begin{align}
\mathcal{B}=f^{-1}\begin{pmatrix}D_{r}&0\\0&0\end{pmatrix}=f^{-1}
\bordermatrix{
~& & & & &\Pi_{s=1}^{N}J_{s}-r \cr
~&d_{1}&0&\cdots&0&0\cr
~&0&d_{2}&\cdots&0&0\cr
~&\vdots&\vdots&\ddots&\vdots&\vdots\cr
~&0&0&\cdots&d_{r}&0\cr
\Pi_{s=1}^{N}I_{s}-r&0&0&\cdots&0&0
}.
\end{align} The subscript of $d_{i}$ in the diagonal matrix $\begin{pmatrix}D_{r}&0\\0&0\end{pmatrix}$ is $ii$. We want to find the subscript of $d_{i}$ in the tensor $\mathcal{B}.$ Assume that the subscript of $d_{i}$ in the tensor $\mathcal{B}$ is
$(p_{1}^{i}\cdots  p_{N}^{i}q_{1}^{i}\cdots q_{N}^{i})$. From the Definition (\ref{21deff}), we obtain that
\begin{align}\label{21equ321}
p_{1}^{i}+\sum_{k=2}^{N}(p_{k}^{i}-1)\prod_{s=1}^{k-1}I_{s}=i,
\end{align}
and
\begin{align}
q_{1}^{i}+\sum_{k=2}^{N}(q_{k}^{i}-1)\prod_{s=1}^{k-1}J_{s}=i.
\end{align}It follows from equation (\ref{21equ321}) that
\begin{align}
(p_{N}^{i}-1)\prod_{s=1}^{N-1}I_{s}=&i-p_{1}^{i}-(p_{2}^{i}-1)I_{1}-(p_{3}^{i}-1)I_{1}I_{2}-\cdots-(p_{N-1}^{i}-1)\prod_{s=1}^{N-2}I_{s}
\nonumber\\ \leq &i-1,
\end{align}
i.e.,
\begin{align}
p_{N}^{i}\leq \frac{i-1}{\prod_{s=1}^{N-1}I_{s}}+1.
\end{align}
Since $p_{N}^{i}$ is a positive integer, we see that
\begin{align}
p_{N}^{i}\leq \left[\frac{i-1}{\prod_{s=1}^{N-1}I_{s}}\right]+1.
\end{align}
We now want to prove that $p_{N}^{i}=\left[\frac{i-1}{\prod_{s=1}^{N-1}I_{s}}\right]+1.$ Suppose $p_{N}^{i}\leq\left[\frac{i-1}{\prod_{s=1}^{N-1}I_{s}}\right].$ Then we have
\begin{align}
i&=p_{1}^{i}+\sum_{k=2}^{N}(p_{k}^{i}-1)\prod_{s=1}^{k-1}I_{s}\nonumber\\
&\leq I_{1}+(I_{2}-1)I_{1}+(I_{3}-1)I_{1}I_{2}+\cdots+(I_{N-1}-1)\prod_{s=1}^{N-2}I_{s}+
\left(\left[\frac{i-1}{\prod_{s=1}^{N-1}I_{s}}\right]-1\right)\prod_{s=1}^{N-1}I_{s}\nonumber
\\&= \left[\frac{i-1}{\prod_{s=1}^{N-1}I_{s}}\right] \prod_{s=1}^{N-1}I_{s}\leq i-1.
\end{align}This is a contradiction. Hence, $p_{N}^{i}=\left[\frac{i-1}{\prod_{s=1}^{N-1}I_{s}}\right]+1.$ Substituting $p_{N}^{i}$ into (\ref{21equ321}) yields
\begin{align}
p_{1}^{i}+\sum_{k=2}^{N-1}(p_{k}^{i}-1)\prod_{s=1}^{k-1}I_{s}=i-(p_{N}^{i}-1)\prod_{s=1}^{N-1}I_{s}.
\end{align}
Hence, we can obtain the expression of $p_{N-1}^{i}$ by using the same method. Similarly, we can give the expressions of $p_{1}^{i},\ldots,p_{N-2}^{i},$ and $q_{1}^{i},\ldots,q_{N}^{i}.$

\end{proof}

\begin{remark}
 Theorem \ref{23theorem32} contains the existing generalization of the SVD for a $2\times 2\times 2\times 2$-real tensor $\mathcal{A}$ \cite{navasca} as a special case. Moreover, we present the positions of the positive singular values of the quaternion matrix $f(\mathcal{A})$ in the real tensor $\mathcal{B}$.
\end{remark}

\begin{example}\label{27example2}
Consider the SVD for the $2\times 2\times 3\times 2$-quaternion tensor $\mathcal{A}$ defined by
\begin{align*}
a_{1111}=0,~a_{1121}=0,~a_{1131}=\frac{\sqrt{2}}{2}\mathbf{j},~a_{1112}=\frac{\sqrt{2}}{2}\mathbf{j},~a_{1122}=0,~a_{1132}=0,
\end{align*}
\begin{align*}
a_{2111}=-\mathbf{i},~a_{2121}=-\mathbf{i},~a_{2131}=0,~a_{2112}=0,~a_{2122}=0,~a_{2132}=0,
\end{align*}
\begin{align*}
a_{1211}=\mathbf{j},~a_{1221}=-\mathbf{j},~a_{1231}=-\mathbf{j},~a_{1212}=\mathbf{j},~a_{1222}=0,~a_{1232}=0,
\end{align*}
\begin{align*}
a_{2211}=\frac{\sqrt{2}}{2}(-1+\mathbf{i}+\mathbf{j}-\mathbf{k}),~a_{2221}=\frac{\sqrt{2}}{2}(1-\mathbf{i}-\mathbf{j}+\mathbf{k}),
~a_{2231}=\frac{\sqrt{2}}{2}(-1+\mathbf{i}+\mathbf{j}-\mathbf{k}),
\end{align*}
\begin{align*}
a_{2212}=\frac{\sqrt{2}}{2}(1-\mathbf{i}-\mathbf{j}+\mathbf{k}),
~a_{2222}=-1-\mathbf{i}+\mathbf{j}-\mathbf{k},~a_{2232}=-1-\mathbf{i}+\mathbf{j}-\mathbf{k}.
\end{align*}
Then we have
\begin{align}
\mathcal{A}=\mathcal{U}*_{2}\mathcal{B}*_{2}\mathcal{V}^{*},
\end{align}where $\mathcal{U}\in \mathbb{H}^{2\times 2\times 2\times 2}$ and $\mathcal{V}\in \mathbb{H}^{3\times 2\times 3\times 2}$ are unitary quaternion tensors, $\mathcal{B}\in \mathbb{R}^{2\times 2\times 3\times 2}$ is a real tensor and
\begin{align*}
\mathcal{B}_{i_{1}i_{2}j_{1}j_{2}}=
 \left\{\begin{array}{c}
 1, ~\mbox{if}~(i_{1}i_{2}j_{1}j_{2})=(1111),\\
 \sqrt{2}, ~\mbox{if}~(i_{1}i_{2}j_{1}j_{2})=(2121),\\
 2, ~\mbox{if}~(i_{1}i_{2}j_{1}j_{2})=(1231),\\
 4, ~\mbox{if}~(i_{1}i_{2}j_{1}j_{2})=(2212),\\
 0,~\mbox{otherwise},
 \end{array}
  \right.
\end{align*}
\begin{align*}
\mathcal{U}_{1111}=\mathbf{i},~\mathcal{U}_{1121}=0,
~\mathcal{U}_{1112}=0,~\mathcal{U}_{1122}=0,
\end{align*}
\begin{align*}
\mathcal{U}_{2111}=0,~\mathcal{U}_{2121}=\mathbf{j},
~\mathcal{U}_{2112}=0,~\mathcal{U}_{2122}=0,
\end{align*}
\begin{align*}
\mathcal{U}_{1211}=0,~\mathcal{U}_{1221}=0,
~\mathcal{U}_{1212}=\mathbf{k},~\mathcal{U}_{1222}=0,
\end{align*}
\begin{align*}
\mathcal{U}_{2211}=0,~\mathcal{U}_{2221}=0,
~
\mathcal{U}_{2212}=0,
~\mathcal{U}_{2222}=\frac{\sqrt{2}}{2}+\frac{\sqrt{2}}{2}\mathbf{j},
\end{align*}
\begin{align*}
\mathcal{V}^{*}_{1111}=0,~\mathcal{V}^{*}_{1121}=0,
~\mathcal{V}^{*}_{1131}=-\frac{\sqrt{2}}{2}\mathbf{k},~\mathcal{V}^{*}_{1112}=-\frac{\sqrt{2}}{2}\mathbf{k},
~\mathcal{V}^{*}_{1122}=0,~\mathcal{V}^{*}_{1132}=0,
\end{align*}
\begin{align*}
\mathcal{V}^{*}_{2111}=-\frac{\sqrt{2}}{2}\mathbf{k},,~\mathcal{V}^{*}_{2121}=-\frac{\sqrt{2}}{2}\mathbf{k},,
~\mathcal{V}^{*}_{2131}=0,~\mathcal{V}^{*}_{2112}=0,
~\mathcal{V}^{*}_{2122}=0,~\mathcal{V}^{*}_{2132}=0,
\end{align*}
\begin{align*}
\mathcal{V}^{*}_{3111}=\frac{1}{2}\mathbf{i},~\mathcal{V}^{*}_{3121}=-\frac{1}{2}\mathbf{i},
~\mathcal{V}^{*}_{3131}=-\frac{1}{2}\mathbf{i},~\mathcal{V}^{*}_{3112}=\frac{1}{2}\mathbf{i},
~\mathcal{V}^{*}_{3122}=0,~\mathcal{V}^{*}_{3132}=0,
\end{align*}
\begin{align*}
\mathcal{V}^{*}_{1211}=\frac{1}{4}(\mathbf{i}+\mathbf{j}),~\mathcal{V}^{*}_{1221}=-\frac{1}{4}(\mathbf{i}+\mathbf{j}),
~\mathcal{V}^{*}_{1231}=\frac{1}{4}(\mathbf{i}+\mathbf{j}),~\mathcal{V}^{*}_{1212}=-\frac{1}{4}(\mathbf{i}+\mathbf{j}),
\end{align*}
\begin{align*}
~\mathcal{V}^{*}_{1222}=\frac{\sqrt{2}}{4}(\mathbf{j}-\mathbf{k}),~\mathcal{V}^{*}_{1232}=\frac{\sqrt{2}}{4}(\mathbf{j}-\mathbf{k}),
\mathcal{V}^{*}_{2211}=\frac{1}{4}(1-\mathbf{k}),~\mathcal{V}^{*}_{2221}=\frac{1}{4}(-1+\mathbf{k}),
\end{align*}
\begin{align*}
~\mathcal{V}^{*}_{2231}=\frac{1}{4}(1-\mathbf{k}),~\mathcal{V}^{*}_{2212}=\frac{1}{4}(-1+\mathbf{k}),
~\mathcal{V}^{*}_{2222}=\frac{\sqrt{2}}{4}(\mathbf{j}+\mathbf{k}),~\mathcal{V}^{*}_{2232}=\frac{\sqrt{2}}{4}(\mathbf{j}+\mathbf{k}),
\end{align*}
\begin{align*}
\mathcal{V}^{*}_{3211}=0,~\mathcal{V}^{*}_{3221}=0,
~\mathcal{V}^{*}_{3231}=0,~\mathcal{V}^{*}_{3212}=0,
~\mathcal{V}^{*}_{3222}=\frac{\sqrt{2}}{4}+\frac{\sqrt{6}}{4}\mathbf{i},~\mathcal{V}^{*}_{3232}=-\frac{\sqrt{2}}{4}+\frac{\sqrt{6}}{4}\mathbf{i}.
\end{align*}
\end{example}

\begin{remark}
Example \ref{27example2} shows that the real tensor $\mathcal{B}$ does not necessarily satisfy the following condition
 \begin{align*}
\mathcal{B}_{i_{1}\cdots  i_{N}j_{1}\cdots j_{N}}=
 0~~\mbox{if}~~(i_{1}\cdots  i_{N})\neq(j_{1}\cdots j_{N}).
\end{align*}
\end{remark}

\subsection{\textbf{Rank decompositions for quaternion tensors }}
In this section, we consider the rank decompositions for quaternion tensors.  The rank decomposition for a quaternion matrix is given in the following lemma.

\begin{lemma} \cite{rodman} (Quaternion matrix rank decomposition)\label{15lemma33}
Let $A\in \mathbb{H}^{m\times n}$ be of rank $r$. Then there exist invertible quaternion matrices $P\in \mathbb{H}^{m\times m}$ and $Q\in \mathbb{H}^{n\times n}$ such that
\begin{align}
A=P\begin{pmatrix}I_{r}&0\\0&0\end{pmatrix}Q.
\end{align}
\end{lemma}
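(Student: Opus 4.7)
The plan is to derive this rank decomposition directly from the quaternion matrix SVD already stated as Lemma \ref{15lemma31}. By that lemma, there exist unitary $U \in \mathbb{H}^{m \times m}$ and $V \in \mathbb{H}^{n \times n}$ and positive reals $d_1, \ldots, d_r$ with
\[
A = U \begin{pmatrix} D_{r} & 0 \\ 0 & 0 \end{pmatrix} V^{*}, \qquad D_{r} = \mbox{diag}(d_{1}, \ldots, d_{r}).
\]

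The key algebraic step I would use is the block identity
\[
\begin{pmatrix} D_{r} & 0 \\ 0 & 0 \end{pmatrix}
= \begin{pmatrix} D_{r} & 0 \\ 0 & I_{m-r} \end{pmatrix}
\begin{pmatrix} I_{r} & 0 \\ 0 & 0 \end{pmatrix},
\]
which holds over any ring by direct block multiplication. The first factor on the right-hand side is block diagonal with $D_{r}$ (invertible since every $d_{i} > 0$) and the identity block, and hence is invertible in $\mathbb{H}^{m \times m}$, because a block diagonal matrix over a division algebra is invertible exactly when each diagonal block is invertible. Setting
\[
P := U \begin{pmatrix} D_{r} & 0 \\ 0 & I_{m-r} \end{pmatrix}, \qquad Q := V^{*},
\]
I get that $P$ is invertible (a product of two invertibles) and $Q$ is invertible (a unitary quaternion matrix is invertible with inverse $V$). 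Substituting back yields $A = P \begin{pmatrix} I_{r} & 0 \\ 0 & 0 \end{pmatrix} Q$, which is the desired decomposition.

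I do not foresee a serious obstacle here, since the noncommutativity of $\mathbb{H}$ has already been absorbed into Lemma \ref{15lemma31}; the remaining checks (block identity, block-diagonal inversion) are formal and valid over any associative ring, respectively over any division algebra. If one instead prefers a proof that does not invoke the SVD, the alternative I would pursue is a direct Gauss--Jordan reduction of $A$ over $\mathbb{H}$: any nonzero pivot entry is two-sided invertible because $\mathbb{H}$ is a division algebra, so elementary row and column operations reduce $A$ to $\bigl(\begin{smallmatrix} I_{r} & 0 \\ 0 & 0 \end{smallmatrix}\bigr)$, and $P$, $Q$ are recovered as products of the corresponding elementary quaternion matrices. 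Either route delivers the claim.
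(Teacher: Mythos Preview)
Your proposal is correct. The paper does not actually supply its own proof of this lemma: it is stated with a citation to Rodman's book \cite{rodman} and used as a known result, so there is no in-paper argument to compare against. Your derivation from the quaternion SVD (Lemma~\ref{15lemma31}) via the factorization $\bigl(\begin{smallmatrix} D_r & 0 \\ 0 & 0 \end{smallmatrix}\bigr) = \bigl(\begin{smallmatrix} D_r & 0 \\ 0 & I_{m-r} \end{smallmatrix}\bigr)\bigl(\begin{smallmatrix} I_r & 0 \\ 0 & 0 \end{smallmatrix}\bigr)$ is a clean and entirely valid route, and your alternative Gauss--Jordan argument is the more standard textbook proof (and is closer in spirit to how such results are typically established in references like \cite{rodman}, since it does not presuppose the deeper SVD).
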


We give the definition of tensor inverse.
\begin{definition}
[inverse of an even order tensor] A tensor $\mathcal{X}\in\mathbb{H}^{I_{1}\times \cdots \times I_{N}\times I_{1}\times \cdots \times I_{N}}$ is called the inverse of
$\mathcal{A}\in\mathbb{H}^{I_{1}\times \cdots \times I_{N}\times I_{1}\times \cdots \times I_{N}}$ if it satisfies $\mathcal{A}*_{N}\mathcal{X}=\mathcal{X}*_{N}\mathcal{A}=\mathcal{I}.$ It is denoted by $\mathcal{A}^{-1}.$
\end{definition}

Now we present the quaternion tensor rank decomposition.

\begin{theorem}(Quaternion tensor rank decomposition)
Let $\mathcal{A}\in\mathbb{H}^{I_{1}\times \cdots \times I_{N}\times J_{1}\times \cdots \times J_{N}}$ with $r=\mbox{rank} (f(\mathcal{A}))$, where $f$ is the transformation in (\ref{eq:transformation}).  Then there exist invertible quaternion tensors $\mathcal{P}\in\mathbb{H}^{I_{1}\times \cdots \times I_{N}\times I_{1}\times \cdots \times I_{N}}$ and $\mathcal{Q}\in\mathbb{H}^{J_{1}\times \cdots \times J_{N}\times J_{1}\times \cdots \times J_{N}}$ such that
\begin{align}
\mathcal{A}=\mathcal{P}*_{N}\mathcal{B}*_{N}\mathcal{Q}
\end{align}where $\mathcal{B}\in\mathbb{R}^{I_{1}\times \cdots \times I_{N}\times J_{1}\times \cdots \times J_{N}}$ has the following structure
\begin{align}
\mathcal{B}_{i_{1}\cdots  i_{N}j_{1}\cdots j_{N}}=
 \left\{\begin{array}{c}
 1, ~\mbox{if}~(i_{1}\cdots  i_{N}j_{1}\cdots j_{N})=(p_{1}^{i}\cdots  p_{N}^{i}q_{1}^{i}\cdots q_{N}^{i}),\\
 0, \mbox{otherwise},
 \end{array}
  \right.
\end{align}
and
\begin{align}
i\in\{1,\ldots,r\},~p_{N}^{i}=\left[\frac{i-1}{\prod_{s=1}^{N-1}I_{s}}\right]+1,
~
p_{N-1}^{i}=\left[\frac{i-1-(p_{N}^{i}-1)\prod_{s=1}^{N-1}I_{s}}{\prod_{s=1}^{N-2}I_{s}}\right]+1,
\end{align}
\begin{align}
\vdots
\end{align}
\begin{align}
p_{t}^{i}=\left[\frac{i-1-\sum_{k=t+1}^{N}(p_{k}^{i}-1)\prod_{s=1}^{k-1}I_{s}}{\prod_{s=1}^{t-1}I_{s}}\right]+1,
\end{align}
\begin{align}
\vdots
\end{align}
\begin{align}
p_{2}^{i}=\left[\frac{i-1-\sum_{k=3}^{N}(p_{k}^{i}-1)\prod_{s=1}^{k-1}I_{s}}{I_{1}}\right]+1,
~
p_{1}^{i}=i-\sum_{k=2}^{N}(p_{k}^{i}-1)\prod_{s=1}^{k-1}I_{s}.
\end{align}

\end{theorem}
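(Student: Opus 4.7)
The plan is to mirror the strategy used in the proof of Theorem~\ref{23theorem32}, transferring the quaternion \emph{matrix} rank decomposition (Lemma~\ref{15lemma33}) to the tensor setting via the isomorphism $f$ of Lemma~\ref{03lemma22} and Lemma~\ref{21lemma23}. Concretely, first I would set $A = f(\mathcal{A})$ and note that $\mathrm{rank}(A) = r$. Applying Lemma~\ref{15lemma33}, I get invertible $P\in\mathbb{H}^{(\prod I_s)\times(\prod I_s)}$ and $Q\in\mathbb{H}^{(\prod J_s)\times(\prod J_s)}$ with $A = P\begin{pmatrix}I_r & 0\\ 0 & 0\end{pmatrix}Q$. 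Define $\mathcal{P} := f^{-1}(P)$, $\mathcal{Q} := f^{-1}(Q)$ and $\mathcal{B} := f^{-1}\begin{pmatrix}I_r & 0\\ 0 & 0\end{pmatrix}$; then using (\ref{21equ211}) termwise gives $\mathcal{A} = \mathcal{P} *_N \mathcal{B} *_N \mathcal{Q}$.

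Next I would verify invertibility of $\mathcal{P}$ and $\mathcal{Q}$. Since $P$ is invertible with inverse $P^{-1}$ satisfying $PP^{-1} = P^{-1}P = I$, applying $f^{-1}$ to these identities and using (\ref{21equ211}) yields $\mathcal{P} *_N f^{-1}(P^{-1}) = f^{-1}(P^{-1}) *_N \mathcal{P} = f^{-1}(I) = \mathcal{I}$, so $\mathcal{P}^{-1} = f^{-1}(P^{-1})$ exists; the same argument handles $\mathcal{Q}$. This is the direct analogue of the unitarity verification in the proof of Theorem~\ref{23theorem32}.

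The remaining step is to describe the entries of $\mathcal{B}$. The matrix $\begin{pmatrix}I_r & 0\\ 0 & 0\end{pmatrix}$ has a $1$ exactly at position $(i,i)$ for $1 \le i \le r$ and $0$'s elsewhere. Under $f^{-1}$, the row index $i$ unfolds to a multi-index $(p_1^i,\ldots,p_N^i)$ and the column index $i$ unfolds to $(q_1^i,\ldots,q_N^i)$, determined by the relations
\begin{align}
p_1^i + \sum_{k=2}^{N}(p_k^i - 1)\prod_{s=1}^{k-1}I_s = i, \qquad q_1^i + \sum_{k=2}^{N}(q_k^i - 1)\prod_{s=1}^{k-1}J_s = i.
\end{align}
The closed-form expressions for $p_t^i$ (and analogously $q_t^i$) in the theorem statement then follow by exactly the inductive floor-function argument carried out in the proof of Theorem~\ref{23theorem32}: solve for $p_N^i$ first by bounding $p_N^i \le \lfloor (i-1)/\prod_{s=1}^{N-1}I_s\rfloor + 1$ and ruling out strict inequality by a contradiction using the maximal admissible values of $p_1^i,\ldots,p_{N-1}^i$, then substitute back and iterate downward.

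The main obstacle will not be any conceptual difficulty — the algebraic and invertibility parts are essentially formal consequences of the isomorphism — but rather the careful bookkeeping for the positional indices of the nonzero entries of $\mathcal{B}$. Since this bookkeeping is identical to that already handled in Theorem~\ref{23theorem32}, I would simply invoke that argument rather than reproduce it, only noting that here each nonzero entry equals $1$ instead of a singular value $d_i$. This makes the rank decomposition proof essentially a corollary of the SVD proof's index analysis combined with Lemma~\ref{15lemma33} in place of Lemma~\ref{15lemma31}.
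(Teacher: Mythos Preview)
Your proposal is correct and follows essentially the same approach as the paper: apply Lemma~\ref{15lemma33} to $A=f(\mathcal{A})$, pull back through $f^{-1}$ using (\ref{21equ211}) to obtain $\mathcal{A}=\mathcal{P}*_{N}\mathcal{B}*_{N}\mathcal{Q}$, verify invertibility of $\mathcal{P},\mathcal{Q}$ via $f^{-1}(PP^{-1})=f^{-1}(P^{-1}P)=\mathcal{I}$, and then refer to the index computation of Theorem~\ref{23theorem32} for the positions of the nonzero entries of $\mathcal{B}$. The paper's proof does exactly this, including the final appeal to the method of Theorem~\ref{23theorem32} rather than reproducing the floor-function argument.
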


\begin{proof}
Let $A=f(\mathcal{A})$ and $r=\mbox{rank} (f(\mathcal{A}))$. It follows from Lemma \ref{15lemma33} that
\begin{align}
A=P\begin{pmatrix}I_{r}&0\\0&0\end{pmatrix}Q,
\end{align}where $P$ and $Q$ are invertible quaternion matrices. From the property of $f$ in (\ref{21equ211}) and Lemma \ref{21lemma23}, we see that
\begin{align}
f^{-1}(A)=f^{-1}\left(P\begin{pmatrix}I_{r}&0\\0&0\end{pmatrix}Q\right)=f^{-1}(P)*_{N}\left(f^{-1}\begin{pmatrix}I_{r}&0\\0&0\end{pmatrix}\right)*_{N}f^{-1}(Q)
\end{align}
\begin{align}
\Longrightarrow
\mathcal{A}=\mathcal{P}*_{N}\mathcal{B}*_{N}\mathcal{Q},
\end{align}
where $\mathcal{P}=f^{-1}(P)$, $\mathcal{Q}=f^{-1}(Q)$ and $\mathcal{B}=f^{-1}\begin{pmatrix}I_{r}&0\\0&0\end{pmatrix}$ is a real tensor whose nonzero entries are 1. Note that
\begin{align}
\mathcal{I}=f^{-1}(I)=f^{-1}(PP^{-1})=f^{-1}(P)*_{N}f^{-1}(P^{-1})=\mathcal{P}*_{N}f^{-1}(P^{-1}),
\end{align}
\begin{align}
\mathcal{I}=f^{-1}(I)=f^{-1}(P^{-1}P)=f^{-1}(P^{-1})*_{N}f^{-1}(P)=f^{-1}(P^{-1})*_{N}\mathcal{P}.
\end{align}Hence, $\mathcal{P}$ is an invertible quaternion tensor.
Similarly, we can prove that $\mathcal{Q}$ is also an invertible quaternion tensor. We can give the subscript of $1$ in the tensor $\mathcal{B}$ by using the method that in Theorem \ref{23theorem32}.
\end{proof}

\subsection{\textbf{Decomposition for an $\eta$-Hermitian quaternion tensor}}
In this section, we discuss the decomposition for an $\eta$-Hermitian quaternion tensor. Let us turn back to the definition of $\eta$-Hermitian matrix. $\eta$-Hermitian matrix was first proposed in \cite{Took4}, and further discussed in \cite{hewangamc2017}-\cite{FZhang3}.
\begin{definition}[$\eta$-Hermitian matrix] \cite{Took4}
For $\eta\in\{\mathbf{i},\mathbf{j},\mathbf{k}\}$, a square real quaternion matrix $A$ is said to be $\eta$-Hermitian if $A=A^{\eta*},$ where $A^{\eta*}=-\eta A^{*}\eta$.
\end{definition}

The $\eta$-Hermitian matrices can be used in statistical signal processing and widely linear modelling  (\cite{Took1}-\cite{Took4}). Horn and Zhang \cite{FZhang3} presented an analogous special singular value decomposition for an $\eta$-Hermitian matrix.
\begin{lemma}
\label{14lemma35}
\cite{FZhang3}
Suppose that $A$ is $\eta$-Hermitian. Then there is a unitary matrix $U$ and a real nonnegative diagonal matrix $\Sigma$ such that
\begin{align*}
A=U\Sigma U^{\eta*}.
\end{align*}The diagonal entries of $\Sigma$ are the singular values of $A$.
\end{lemma}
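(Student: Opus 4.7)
The plan is to exploit the symmetry $A=-\eta A^{*}\eta$ to produce a second quaternion SVD of $A$ with the same singular values, then use the uniqueness of SVD to collapse both SVDs into the required single-sided form $A=U\Sigma U^{\eta *}$.

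First I would apply the quaternion SVD (Lemma~\ref{15lemma31}) to write $A=U_{1}\Sigma V_{1}^{*}$, with $U_{1},V_{1}$ unitary and $\Sigma$ real nonnegative diagonal. Substituting into the hypothesis $A=-\eta A^{*}\eta$ and using $A^{*}=V_{1}\Sigma U_{1}^{*}$ yields $A=(-\eta V_{1})\Sigma(-\eta U_{1})^{*}$; a short check confirms that left-multiplication by the unit scalar $-\eta$ preserves unitarity, because $(-\eta)\overline{(-\eta)}=1$, so this is a genuine second SVD of $A$ with the same $\Sigma$. By the uniqueness of SVD, there is then a unitary $T$, block-diagonal with respect to the multiplicity pattern of $\Sigma$, with $-\eta V_{1}=U_{1}T$ and $-\eta U_{1}=V_{1}T$; substituting the first relation into the second forces $T^{2}=-I$, so $T$ is a normal unitary whose right-eigenvalues are all unit pure quaternions.

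Next I would unitarily diagonalize $T$ in the form $T=Q(-\eta I)Q^{*}$ for a unitary $Q$ respecting the block structure of $\Sigma$. The spectral theorem for normal quaternion matrices gives $T=P(\mathbf{i} I)P^{*}$ for some unitary $P$; since any two unit pure quaternions are conjugate by a unit quaternion, a scalar correction $Q=P\bar{s}$ (with $s\mathbf{i}\bar{s}=-\eta$) puts the decomposition in the desired form, and because $T$ is already block-diagonal with respect to $\Sigma$ the whole construction can be carried out block by block, keeping $Q$ block-diagonal. Setting $U:=U_{1}Q$, and using $Q\Sigma=\Sigma Q$ together with the identity $Q(\eta I)Q^{*}=-T$, a direct manipulation collapses $U\Sigma U^{\eta *}=-U_{1}Q\Sigma\eta Q^{*}U_{1}^{*}\eta=U_{1}\Sigma TU_{1}^{*}\eta=-\eta V_{1}\Sigma U_{1}^{*}\eta=-\eta A^{*}\eta=A$. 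The nonnegative real entries of $\Sigma$ are, by construction, the singular values of $A$.

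The main technical hurdle is the block-structured diagonalization of $T$ in the second step, particularly the normalization that turns the standardized eigenvalue $\mathbf{i}$ into the specific scalar $-\eta$ we need. Because $T$ already commutes with $\Sigma$ (forced by SVD uniqueness), the diagonalization can be done block by block and the scalar correction applied uniformly. A cleaner alternative, avoiding the quaternion spectral theorem altogether, is to embed the problem into $\mathbb{C}^{2n\times 2n}$ via the $\eta$-complex adjoint $\chi_{\eta}$, under which $\eta$-Hermiticity becomes an explicit Hermitian-like symmetry $\chi_{\eta}(A)=-D\chi_{\eta}(A)^{*}D$ with $D=\chi_{\eta}(\eta I)$; applying the classical complex spectral theorem to the Hermitian-like matrix and descending back via $\chi_{\eta}^{-1}$ gives the decomposition directly, and this is essentially the route taken in Horn--Zhang~\cite{FZhang3}.
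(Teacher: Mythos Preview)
The paper does not prove this lemma at all: it is quoted verbatim from Horn--Zhang~\cite{FZhang3} and used as a black box in the proof of Theorem~3.6, so there is no in-paper argument to compare against.

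Your proposal is a correct and self-contained alternative to the route actually taken in~\cite{FZhang3}. Horn and Zhang work via the $\eta$-complex adjoint $\chi_{\eta}:\mathbb{H}^{n\times n}\to\mathbb{C}^{2n\times 2n}$, under which $\eta$-Hermiticity becomes complex symmetry and the Autonne--Takagi factorization applies directly; descending back through $\chi_{\eta}^{-1}$ gives the result. You instead stay entirely inside $\mathbb{H}^{n\times n}$: two SVDs from the symmetry $A=-\eta A^{*}\eta$, uniqueness to produce the intertwiner $T$ with $T^{2}=-I$, then quaternionic spectral theory to write $T=Q(-\eta I)Q^{*}$ and collapse everything to $U=U_{1}Q$. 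Your approach is more elementary in that it avoids the $2n\times 2n$ embedding, at the cost of invoking the quaternion spectral theorem for normal matrices and the block-structured SVD uniqueness statement; the Horn--Zhang route trades those for the classical complex Takagi factorization.

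One small point to tighten: SVD uniqueness only forces the \emph{same} $T$ in both relations $-\eta V_{1}=U_{1}T$ and $-\eta U_{1}=V_{1}T$ on the blocks corresponding to nonzero singular values; on the zero block the two intertwiners can differ. This is harmless because $\Sigma$ vanishes there and the last $n-r$ columns of $U$ can be chosen freely, but it is worth saying explicitly.
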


Motivated by the wide application of $\eta$-Hermitian matrices, we define the $\eta$-Hermitian tensor as follows.

\begin{definition}[$\eta$-Hermitian tensor]\label{30defi32}
For $\eta\in \{\mathbf{i},\mathbf{j},\mathbf{k}\}$, a square quaternion tensor $\mathcal{A}\in\mathbb{H}^{I_{1}\times \cdots \times I_{N}\times I_{1}\times \cdots \times I_{N}}$ is said to be $\eta$-Hermitian if $\mathcal{A}=\mathcal{A}^{\eta*}$, where $\mathcal{A}^{\eta*}=-\eta \mathcal{A}^{*} \eta.$
\end{definition}

\begin{example}
Consider the $2\times 2\times 2\times 2$-quaternion tensor $\mathcal{A}$ defined by
\begin{align*}
a_{1111}=\mathbf{j}+\mathbf{k},~a_{1112}=1+\mathbf{i}+\mathbf{j},~a_{1121}=2\mathbf{i}+\mathbf{k},~a_{1122}=\mathbf{i}+\mathbf{j}+\mathbf{k},
\end{align*}
\begin{align*}
a_{1211}=1-\mathbf{i}+\mathbf{j},~a_{1212}=1+\mathbf{j},~a_{1221}=\mathbf{j}+\mathbf{k},~a_{1222}=\mathbf{i},
\end{align*}
\begin{align*}
a_{2111}=-2\mathbf{i}+\mathbf{k},~a_{2112}=\mathbf{j}+\mathbf{k},~a_{2121}=2\mathbf{k},~a_{2122}=2\mathbf{i}-\mathbf{j},
\end{align*}
\begin{align*}
a_{2211}=-\mathbf{i}+\mathbf{j}+\mathbf{k},~a_{2212}=-\mathbf{i},~a_{2221}=-2\mathbf{i}-\mathbf{j},~a_{2222}=1+\mathbf{j}-\mathbf{k}.
\end{align*}
Direct computation yields $\mathcal{A}=\mathcal{A}^{\mathbf{i}*}=-\mathbf{i}\mathcal{A}^{*}\mathbf{i}$. Hence, the quaternion tensor $\mathcal{A}$ is $\mathbf{i}$-Hermitian.
\end{example}

\begin{proposition}
Let $\mathcal{A}\in\mathbb{H}^{I_{1}\times \cdots \times I_{N}\times J_{1}\times \cdots \times J_{N}}$ and $\mathcal{B}\in\mathbb{H}^{J_{1}\times \cdots \times J_{N}\times K_{1}\times \cdots \times K_{M}}$. Then
$$(\mathcal{A}*_{N}\mathcal{B})^{\eta*}=\mathcal{B}^{\eta*}*_{N}\mathcal{A}^{\eta*}.$$
\end{proposition}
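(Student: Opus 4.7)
The plan is to unfold the definition $\mathcal{A}^{\eta*}=-\eta\mathcal{A}^{*}\eta$ on both sides of the asserted identity and reduce it to the ordinary adjoint rule $(\mathcal{A}*_{N}\mathcal{B})^{*}=\mathcal{B}^{*}*_{N}\mathcal{A}^{*}$ already established in the earlier proposition. No machinery beyond this and the scalar arithmetic $\eta^{2}=-1$ in $\mathbb{H}$ is needed.

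First I would record the elementary observation that a scalar quaternion factors through the Einstein product: for any $\eta\in\mathbb{H}$ one has $(\eta\mathcal{X})*_{N}\mathcal{Y}=\eta(\mathcal{X}*_{N}\mathcal{Y})$, $\mathcal{X}*_{N}(\mathcal{Y}\eta)=(\mathcal{X}*_{N}\mathcal{Y})\eta$, and $(\mathcal{X}\eta)*_{N}\mathcal{Y}=\mathcal{X}*_{N}(\eta\mathcal{Y})$; each follows immediately from the summand definition of $*_{N}$ and needs no commutation of $\eta$ with tensor entries. Then I would expand $\mathcal{B}^{\eta*}*_{N}\mathcal{A}^{\eta*}=(-\eta\mathcal{B}^{*}\eta)*_{N}(-\eta\mathcal{A}^{*}\eta)$, use the scalar-slide rules to pull the two outermost $\eta$'s out to the extreme left and right of the Einstein product, and combine the two inner $\eta$'s that thereby become adjacent inside every summand into $\eta^{2}=-1$. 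This brings the expression to $-\eta(\mathcal{B}^{*}*_{N}\mathcal{A}^{*})\eta$, and invoking the earlier adjoint identity $\mathcal{B}^{*}*_{N}\mathcal{A}^{*}=(\mathcal{A}*_{N}\mathcal{B})^{*}$ converts this into $-\eta(\mathcal{A}*_{N}\mathcal{B})^{*}\eta$, which is exactly $(\mathcal{A}*_{N}\mathcal{B})^{\eta*}$ by definition.

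The one step that requires genuine care — and which I would double-check most carefully — is the collapse of the two inner $\eta$'s. Because $\mathbb{H}$ is noncommutative one cannot in general transpose a quaternion past a tensor entry, so this simplification relies on the fact that after the scalar-slide step the two $\eta$'s stand literally adjacent inside each summand $b^{*}_{\cdot j}\,\eta\cdot\eta\,a^{*}_{j\cdot}$, and their product $\eta^{2}=-1$ is a \emph{real} scalar that commutes with every quaternion entry and can therefore be pulled out of the sum as a single global sign. This is the only noncommutativity trap in the argument; once it is verified, the three-line computation closes and the proposition follows.
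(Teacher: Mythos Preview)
Your argument is correct: the scalar-slide identities you record hold by associativity of quaternion multiplication inside each summand of the Einstein product, and the collapse of the two inner $\eta$'s into the real scalar $\eta^{2}=-1$ is exactly the point where noncommutativity could bite but does not, since the two copies of $\eta$ become literally adjacent. The computation then reduces cleanly to the already-known rule $(\mathcal{A}*_{N}\mathcal{B})^{*}=\mathcal{B}^{*}*_{N}\mathcal{A}^{*}$.

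As for comparison with the paper: there is nothing to compare against. The paper states this proposition without proof, treating it as a routine verification in the same spirit as the earlier proposition on $(\mathcal{A}*_{N}\mathcal{B})^{*}$. Your write-up is precisely the direct check one would carry out, and in particular your explicit flagging of the $\eta\cdot\eta=-1$ step as the sole place where noncommutativity needs attention is a useful clarification that the paper does not spell out.
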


Now we give the decomposition for an $\eta$-Hermitian quaternion tensor.
\begin{theorem}(Decomposition for an $\eta$-Hermitian quaternion tensor)
Let $\mathcal{A}=\mathcal{A}^{\eta*}\in\mathbb{H}^{I_{1}\times \cdots \times I_{N}\times I_{1}\times \cdots \times I_{N}}$ with $r=\mbox{rank} (f(\mathcal{A}))$, where $f$ is the transformation in (\ref{eq:transformation}).  Then there exist
 a unitary quaternion tensor $\mathcal{U}\in\mathbb{H}^{I_{1}\times \cdots \times I_{N}\times I_{1}\times \cdots \times I_{N}}$  such that
\begin{align}
\mathcal{A}=\mathcal{U}*_{N}\mathcal{B}*_{N}\mathcal{U}^{\eta*},
\end{align}
where $\mathcal{B}\in\mathbb{R}^{I_{1}\times \cdots \times I_{N}\times I_{1}\times \cdots \times I_{N}}$ has the following structure
\begin{align}
\mathcal{B}_{i_{1}\cdots  i_{N}j_{1}\cdots j_{N}}=
 \left\{\begin{array}{c}
 d_{i}, (i_{1}\cdots  i_{N}j_{1}\cdots j_{N})=(p_{1}^{i}\cdots  p_{N}^{i}p_{1}^{i}\cdots p_{N}^{i}),\\
 0, \mbox{otherwise},
 \end{array}
  \right.
\end{align}
$d_{i}~ (i=1,\ldots,r)$ are the nonzero singular values of the quaternion matrix $f(\mathcal{A})$ and
\begin{align}
p_{N}^{i}=\left[\frac{i-1}{\prod_{s=1}^{N-1}I_{s}}\right]+1,
~
p_{N-1}^{i}=\left[\frac{i-1-(p_{N}^{i}-1)\prod_{s=1}^{N-1}I_{s}}{\prod_{s=1}^{N-2}I_{s}}\right]+1,
\end{align}
\begin{align}
\vdots
\end{align}
\begin{align}
p_{t}^{i}=\left[\frac{i-1-\sum_{k=t+1}^{N}(p_{k}^{i}-1)\prod_{s=1}^{k-1}I_{s}}{\prod_{s=1}^{t-1}I_{s}}\right]+1,
\end{align}
\begin{align}
\vdots
\end{align}
\begin{align}
p_{2}^{i}=\left[\frac{i-1-\sum_{k=3}^{N}(p_{k}^{i}-1)\prod_{s=1}^{k-1}I_{s}}{I_{1}}\right]+1,
~
p_{1}^{i}=i-\sum_{k=2}^{N}(p_{k}^{i}-1)\prod_{s=1}^{k-1}I_{s}.
\end{align}
\end{theorem}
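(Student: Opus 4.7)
The plan is to mimic the proofs of \thmref{23theorem32} and the rank decomposition by pulling the statement back to a matrix identity through the transformation $f$, applying \lemref{14lemma35} (Horn--Zhang's $\eta$-Hermitian SVD), and then pushing the factorization forward through $f^{-1}$ using \lemref{03lemma22} and \lemref{21lemma23}. The new ingredient that did not appear in the SVD or rank-decomposition case is that the $\eta$-conjugation $(\cdot)^{\eta*}$ must be transported across $f$, so the technical heart of the argument is a compatibility lemma of the form $f(\mathcal{A}^{\eta*}) = f(\mathcal{A})^{\eta*}$.

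First I would verify that $f$ is compatible with the ordinary conjugate transpose, i.e. $f(\mathcal{A}^{*}) = f(\mathcal{A})^{*}$. This is immediate from \lemref{21deff}: the two bijections $h_{1},h_{2}$ used to linearize the first and second blocks of indices are interchanged under the transpose, and the entry-wise conjugation on $\mathcal{A}^{*}$ matches the entry-wise conjugation on the matrix transpose. Because the scalar $\eta \in \{\mathbf{i},\mathbf{j},\mathbf{k}\}$ acts on a tensor entry-wise, $f$ also commutes with left- and right-multiplication by $\eta$. Consequently
\[
f(\mathcal{A}^{\eta*}) \;=\; f(-\eta\,\mathcal{A}^{*}\,\eta) \;=\; -\eta\, f(\mathcal{A})^{*}\,\eta \;=\; f(\mathcal{A})^{\eta*}.
\]
Setting $A := f(\mathcal{A})$ and using $\mathcal{A} = \mathcal{A}^{\eta*}$, this gives $A = A^{\eta*}$, so $A$ is an $\eta$-Hermitian quaternion matrix of rank $r$.

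Next I would apply \lemref{14lemma35} to obtain a unitary matrix $U \in \mathbb{H}^{\Pi_{s}I_{s}\times \Pi_{s}I_{s}}$ and a real nonnegative diagonal matrix $\Sigma$ whose nonzero entries are the singular values $d_{1},\ldots,d_{r}$ of $A$, such that $A = U \Sigma U^{\eta*}$. Applying $f^{-1}$ and invoking \lemref{03lemma22} and the identity (\ref{21equ211}) gives
\[
\mathcal{A} \;=\; f^{-1}(U)\,*_{N}\, f^{-1}(\Sigma)\,*_{N}\, f^{-1}(U^{\eta*}) \;=\; \mathcal{U}*_{N}\mathcal{B}*_{N}\mathcal{U}^{\eta*},
\]
where $\mathcal{U} := f^{-1}(U)$ and $\mathcal{B} := f^{-1}(\Sigma) \in \mathbb{R}^{I_{1}\times\cdots\times I_{N}\times I_{1}\times\cdots\times I_{N}}$; the identification $f^{-1}(U^{\eta*}) = \mathcal{U}^{\eta*}$ is just the compatibility lemma above read backwards. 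Unitarity of $\mathcal{U}$ follows verbatim from the computation already carried out in \thmref{23theorem32}.

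Finally, I would determine the support of $\mathcal{B}$. Since $\Sigma$ carries $d_{i}$ at position $(i,i)$ for $1 \le i \le r$ and both the row and column index groups of $\mathcal{A}$ have the same bounds $I_{1},\ldots,I_{N}$, the bijection $h_{1}$ is used on both sides; inverting it via the base-mixed-radix identities (\ref{23equ034})--(\ref{23equ313}) (with the same $I_{s}$ replacing $J_{s}$) produces the multi-index $(p_{1}^{i}\cdots p_{N}^{i})$ on both the row and column blocks. Hence the nonzero entries of $\mathcal{B}$ sit exactly at positions $(p_{1}^{i}\cdots p_{N}^{i}\,p_{1}^{i}\cdots p_{N}^{i})$ with value $d_{i}$, as claimed. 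The main (and only) obstacle is the compatibility identity $f(\mathcal{A}^{\eta*}) = f(\mathcal{A})^{\eta*}$; once this is in hand, the rest of the proof is a direct repetition of the template used for the quaternion tensor SVD.
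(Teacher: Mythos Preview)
Your proposal is correct and follows essentially the same route as the paper: apply \lemref{14lemma35} to $A=f(\mathcal{A})$, push back through $f^{-1}$ via \lemref{03lemma22}, verify unitarity of $\mathcal{U}$ as in \thmref{23theorem32}, and read off the support of $\mathcal{B}$ from the mixed-radix inversion. If anything, you are more careful than the paper, which silently assumes the compatibility $f(\mathcal{A}^{\eta*})=f(\mathcal{A})^{\eta*}$ that you explicitly justify.
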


\begin{proof}
Let $A=f(\mathcal{A}).$ It follows from Lemma \ref{14lemma35} that
\begin{align}
A=U\begin{pmatrix}D_{r}&0\\0&0\end{pmatrix} U^{\eta*},
\end{align}where $U$ is a unitary quaternion matrix and $D_{r}=\mbox{diag} (d_{1},\ldots,d_{r})$ and $d_{i}~ (i=1,\ldots,r)$ are the nonzero singular values of the quaternion matrix $A$. From the property of $f$ in (\ref{21equ211}) and Lemma \ref{14lemma35}, we have
\begin{align}
f^{-1}(A)=f^{-1}\left(U\begin{pmatrix}D_{r}&0\\0&0\end{pmatrix}U^{\eta*} \right)=f^{-1}(U)*_{N}\left(f^{-1}\begin{pmatrix}D_{r}&0\\0&0\end{pmatrix}\right)*_{N}f^{-1}(U^{\eta*})
\end{align}
\begin{align}
\Longrightarrow
\mathcal{A}=\mathcal{U}*_{N}\mathcal{B}*_{N}\mathcal{U}^{\eta*},
\end{align}
where $\mathcal{U}=f^{-1}(U)$ and $\mathcal{B}=f^{-1}\begin{pmatrix}D_{r}&0\\0&0\end{pmatrix}$ is a real tensor whose nonzero entries are $d_{i}$. Note that
\begin{align}
\mathcal{U}*_{N}\mathcal{U}^{*}=f^{-1}(U)*_{N}f^{-1}(U)^{*}=f^{-1}(U)*_{N}f^{-1}(U^{*})=f^{-1}(UU^{*})=f^{-1}(I)=\mathcal{I},
\end{align}
\begin{align}
\mathcal{U}^{*}*_{N}\mathcal{U}=f^{-1}(U)^{*}*_{N}f^{-1}(U)=f^{-1}(U^{*})*_{N}f^{-1}(U)=f^{-1}(U^{*}U)=f^{-1}(I)=\mathcal{I}.
\end{align}
Hence, $\mathcal{U}$ is a unitary quaternion tensor. We can give the subscript of $d_{i}$ in the tensor $\mathcal{B}$ by using the method that in Theorem \ref{23theorem32}.
\end{proof}

\begin{example}
Consider the decomposition for the $2\times 2\times 2\times 2$-quaternion tensor $\mathcal{A}$ defined by
\begin{align*}
a_{1111}=1+\frac{1+\sqrt{3}}{4}\mathbf{i},~a_{1121}=-1+\frac{1+\sqrt{3}}{4}\mathbf{i},~a_{1112}=-\frac{1}{4}+\frac{\sqrt{3}}{4},
~a_{1122}=-\frac{1}{4}+\frac{\sqrt{3}}{4},
\end{align*}
\begin{align*}
a_{2111}=-1+\frac{1+\sqrt{3}}{4}\mathbf{i},~a_{2121}=1+\frac{1+\sqrt{3}}{4}\mathbf{i},~a_{2112}=-\frac{1}{4}+\frac{\sqrt{3}}{4},
~a_{2122}=-\frac{1}{4}+\frac{\sqrt{3}}{4},
\end{align*}
\begin{align*}
a_{1211}=-\frac{1}{4}+\frac{\sqrt{3}}{4},~a_{1221}=-\frac{1}{4}+\frac{\sqrt{3}}{4},
~a_{1212}=1-\frac{1+\sqrt{3}}{4}\mathbf{i},~a_{1222}=-1-\frac{1+\sqrt{3}}{4}\mathbf{i},
\end{align*}
\begin{align*}
a_{2211}=-\frac{1}{4}+\frac{\sqrt{3}}{4},~a_{2221}=-\frac{1}{4}+\frac{\sqrt{3}}{4},
~
a_{2212}=-1-\frac{1+\sqrt{3}}{4}\mathbf{i},
~a_{2222}=1-\frac{1+\sqrt{3}}{4}\mathbf{i}.
\end{align*}
Note that $\mathcal{A}$ is $\mathbf{k}$-Hermitian.
Then we have
\begin{align}
\mathcal{A}=\mathcal{U}*_{2}\mathcal{B}*_{2}\mathcal{U}^{\mathbf{k}*},
\end{align}where $\mathcal{U}\in \mathbb{H}^{2\times 2\times 2\times 2}$ is unitary quaternion tensor, $\mathcal{B}\in \mathbb{R}^{2\times 2\times 2\times 2}$ is a real tensor and
\begin{align*}
\mathcal{B}_{i_{1}i_{2}j_{1}j_{2}}=
 \left\{\begin{array}{c}
 1, ~\mbox{if}~(i_{1}i_{2}j_{1}j_{2})=(1111),\\
 \sqrt{3}, ~\mbox{if}~(i_{1}i_{2}j_{1}j_{2})=(2121),\\
 2, ~\mbox{if}~(i_{1}i_{2}j_{1}j_{2})=(1212),\\
 3, ~\mbox{if}~(i_{1}i_{2}j_{1}j_{2})=(2222),\\
 0,~\mbox{otherwise},
 \end{array}
  \right.
\end{align*}
\begin{align*}
\mathcal{U}_{1111}=\frac{\sqrt{2}}{4}(1+\mathbf{i}),~\mathcal{U}_{1121}=-\frac{\sqrt{2}}{4}(1+\mathbf{i}),
~\mathcal{U}_{1112}=\frac{\sqrt{2}}{2}\mathbf{k},~\mathcal{U}_{1122}=0,
\end{align*}
\begin{align*}
\mathcal{U}_{2111}=\frac{\sqrt{2}}{4}(1+\mathbf{i}),~\mathcal{U}_{2121}=\frac{\sqrt{2}}{4}(1+\mathbf{i}),
~\mathcal{U}_{2112}=-\frac{\sqrt{2}}{2}\mathbf{k},~\mathcal{U}_{2122}=0,
\end{align*}
\begin{align*}
\mathcal{U}_{1211}=\frac{\sqrt{2}}{4}(-1+\mathbf{i}),~\mathcal{U}_{1221}=\frac{\sqrt{2}}{4}(-1+\mathbf{i}),
~\mathcal{U}_{1212}=0,~\mathcal{U}_{1222}=-\frac{\sqrt{2}}{2}\mathbf{k},
\end{align*}
\begin{align*}
\mathcal{U}_{2211}=\frac{\sqrt{2}}{4}(-1+\mathbf{i}),~\mathcal{U}_{2221}=\frac{\sqrt{2}}{4}(-1+\mathbf{i}),
~
\mathcal{U}_{2212}=0,
~\mathcal{U}_{2222}=\frac{\sqrt{2}}{2}\mathbf{k}.
\end{align*}

\end{example}

\section{\textbf{Moore-Penrose inverse of quaternion tensors via Einstein product}}

In this section, we consider the Moore-Penrose inverse of quaternion tensors by using the quaternion tensor SVD. At first, we define the Moore-Penrose inverse of quaternion tensor via Einstein product. The definition of the Moore-Penrose inverse of tensors over the complex number field was given in \cite{Sun}.

\begin{definition}\label{23defi41}
Let $\mathcal{A}\in\mathbb{H}^{I_{1}\times \cdots \times I_{N}\times J_{1}\times \cdots \times J_{N}}$. The tensor
$\mathcal{X}\in\mathbb{H}^{J_{1}\times \cdots \times J_{N}\times I_{1}\times \cdots \times I_{N}}$ satisfying the following four quaternion tensor equations\\
$(1)$ $\mathcal{A}*_{N}\mathcal{X}*_{N}\mathcal{A}=\mathcal{A}$;\\
$(2)$ $\mathcal{X}*_{N}\mathcal{A}*_{N}\mathcal{X}=\mathcal{X}$;\\
$(3)$ $(\mathcal{A}*_{N}\mathcal{X})^{*}=\mathcal{A}*_{N}\mathcal{X}$;\\
$(4)$ $(\mathcal{X}*_{N}\mathcal{A})^{*}=\mathcal{X}*_{N}\mathcal{A}$,\\
is called the Moore-Penrose inverse of $\mathcal{A}$, and is denoted by $\mathcal{A}^{\dag}$.
\end{definition}
Now we give the Moore-Penrose inverse of a quaternion tensor.

\begin{theorem}\label{03theorem41}
Let the SVD of the quaternion tensor $\mathcal{A}$ be
\begin{align}
\mathcal{A}=\mathcal{U}*_{N}\mathcal{B}*_{N}\mathcal{V}^{*},
\end{align}
where $\mathcal{U},\mathcal{V}$ are unitary tensors and the tensor $\mathcal{B}\in\mathbb{R}^{I_{1}\times \cdots \times I_{N}\times J_{1}\times \cdots \times J_{N}}$ is given in (\ref{23equ033}). Then the Moore-Penrose inverse of $\mathcal{A}$ exists and is unique and
\begin{align}
\mathcal{A}^{\dag}=\mathcal{V}*_{N}\mathcal{B}^{\dag}*_{N}\mathcal{U}^{*},
\end{align}
where the tensor $\mathcal{B}^{\dag}\in\mathbb{R}^{J_{1}\times \cdots \times J_{N}\times I_{1}\times \cdots \times I_{N}}$ has the following structure
\begin{align}\label{23equ033}
\mathcal{B}^{\dag}_{j_{1}\cdots  j_{N}i_{1}\cdots i_{N}}=
 \left\{\begin{array}{c}
 d^{-1}_{i}, ~\mbox{if}~(j_{1}\cdots  j_{N}i_{1}\cdots i_{N})=(q_{1}^{i}\cdots  q_{N}^{i}p_{1}^{i}\cdots p_{N}^{i}),\\
 0, \mbox{otherwise},
 \end{array}
  \right.
\end{align}
and $d_{i}~ (i=1,\ldots,r)$ are the positive singular values of the quaternion matrix $f(\mathcal{A})$ and the expressions of $q_{1}^{i},\ldots,  q_{N}^{i},p_{1}^{i},\ldots, p_{N}^{i}$ are given in (\ref{23equ034})-(\ref{23equ313}).
\end{theorem}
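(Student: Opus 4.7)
The plan is to lift the problem to the quaternion matrix level via the bijection $f$, invoke the classical Moore--Penrose inverse via SVD for matrices, and then descend back through $f^{-1}$. Since every operation involved (Einstein product, conjugate transpose) has already been shown to be compatible with $f$, each of the four tensor equations in Definition \ref{23defi41} will reduce to a matrix identity.

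First, I would record the missing compatibility: for any tensor $\mathcal{A}\in\mathbb{H}^{I_{1}\times\cdots\times I_{N}\times J_{1}\times\cdots\times J_{N}}$, the transformation $f$ intertwines conjugate transpose with matrix conjugate transpose, i.e. $f(\mathcal{A}^{*})=f(\mathcal{A})^{*}$. This is immediate from Definition \ref{21deff}, since the index maps $h_{1},h_{2}$ used to build $f$ simply swap roles when we pass from $\mathcal{A}$ to $\mathcal{A}^{*}$, and the conjugation is entrywise. Combined with Lemma \ref{03lemma22}(2), this means each of the four Moore--Penrose identities for a tensor pair $(\mathcal{A},\mathcal{X})$ is equivalent, under $f$, to the corresponding matrix identity for $(f(\mathcal{A}),f(\mathcal{X}))$.

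Second, I would verify existence. Set $A=f(\mathcal{A})=U\left(\begin{smallmatrix}D_{r}&0\\0&0\end{smallmatrix}\right)V^{*}$ from the SVD of $A$, where $U=f(\mathcal{U})$, $V=f(\mathcal{V})$. It is classical that $A^{\dag}=V\left(\begin{smallmatrix}D_{r}^{-1}&0\\0&0\end{smallmatrix}\right)U^{*}$ satisfies the four Moore--Penrose equations. Pulling this back by $f^{-1}$ using Lemma \ref{21lemma23} and the homomorphism property (\ref{21equ211}), the tensor
\[
\mathcal{X}=f^{-1}(V)*_{N}f^{-1}\begin{pmatrix}D_{r}^{-1}&0\\0&0\end{pmatrix}*_{N}f^{-1}(U^{*})=\mathcal{V}*_{N}\mathcal{B}^{\dag}*_{N}\mathcal{U}^{*}
\]
satisfies the four conditions of Definition \ref{23defi41}, so $\mathcal{A}^{\dag}=\mathcal{X}$ exists and has the claimed form.

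Third, uniqueness follows the same lifting strategy: if $\mathcal{X}_{1}$ and $\mathcal{X}_{2}$ both meet (1)--(4) of Definition \ref{23defi41}, then $f(\mathcal{X}_{1})$ and $f(\mathcal{X}_{2})$ both satisfy the four matrix Moore--Penrose identities for $A=f(\mathcal{A})$; uniqueness of the matrix Moore--Penrose inverse forces $f(\mathcal{X}_{1})=f(\mathcal{X}_{2})$, and bijectivity of $f$ (Lemma \ref{03lemma22}(1)) gives $\mathcal{X}_{1}=\mathcal{X}_{2}$. The remaining point is to justify the stated entrywise description of $\mathcal{B}^{\dag}$ in (4.3): this is just $f^{-1}$ applied to the matrix $\left(\begin{smallmatrix}D_{r}^{-1}&0\\0&0\end{smallmatrix}\right)$, so the position of the entry $d_{i}^{-1}$ is computed by the same recursive inversion of the index map $h_{1},h_{2}$ that produced formulas (\ref{23equ034})--(\ref{23equ313}) in the proof of Theorem \ref{23theorem32}, with the roles of the $I$'s and $J$'s interchanged. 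The only mildly technical step in the whole argument is checking $f(\mathcal{A}^{*})=f(\mathcal{A})^{*}$; everything else is a direct translation through $f$ of the matrix theory, so I do not anticipate a substantive obstacle.
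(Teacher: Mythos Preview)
Your proposal is correct. The main difference from the paper's argument is in the uniqueness step. The paper proves uniqueness directly at the tensor level via the standard chain of equalities using only the four Moore--Penrose axioms: if $\mathcal{X}$ and $\mathcal{Y}$ both satisfy Definition~\ref{23defi41}, then $\mathcal{X}=\mathcal{X}*_{N}(\mathcal{A}*_{N}\mathcal{X})^{*}=\cdots=\mathcal{Y}$, a line-by-line replay of the matrix argument written in tensor notation. You instead push both candidates through $f$ to the matrix level, invoke matrix uniqueness, and descend via bijectivity of $f$. For existence, the paper simply asserts that $\mathcal{V}*_{N}\mathcal{B}^{\dag}*_{N}\mathcal{U}^{*}$ satisfies Definition~\ref{23defi41} without spelling out the verification; your route through $f$ makes that verification explicit and is in the same spirit as the paper's own proof of the SVD (Theorem~\ref{23theorem32}). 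Your approach is more uniform and makes transparent that the entire theory is transported from matrices via the isomorphism $f$; the paper's direct tensor-level uniqueness proof has the minor advantage of not needing the compatibility $f(\mathcal{A}^{*})=f(\mathcal{A})^{*}$ at that particular step, though this compatibility is used (implicitly) elsewhere in the paper.
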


\begin{proof}
Note that
\begin{align}
\mathcal{A}^{\dag}=\mathcal{V}*_{N}\mathcal{B}^{\dag}*_{N}\mathcal{U}^{*},
\end{align}satisfy Definition \ref{23defi41}. Thus, the Moore-Penrose inverse of $\mathcal{A}$ exists. Now we want to prove the Moore-Penrose inverse is unique. Let $\mathcal{X}$ and $\mathcal{Y}$ be Moore-Penrose inverses of $\mathcal{A}$. Then we have
\begin{align*}
\mathcal{X}&=\mathcal{X}*_{N}(\mathcal{A}*_{N}\mathcal{X})^{*}=\mathcal{X}*_{N}\mathcal{X}^{*}*_{N}\mathcal{A}^{*}
=\mathcal{X}*_{N}(\mathcal{A}*_{N}\mathcal{X})^{*}*_{N}(\mathcal{A}*_{N}\mathcal{Y})^{*}\\&
=\mathcal{X}*_{N}\mathcal{A}*_{N}\mathcal{Y}=(\mathcal{X}*_{N}\mathcal{A})^{*}*_{N}
(\mathcal{Y}*_{N}\mathcal{A})^{*}*_{N}\mathcal{Y}=\mathcal{A}^{*}*_{N}\mathcal{Y}^{*}*_{N}\mathcal{Y}\\
&=(\mathcal{Y}*_{N}\mathcal{A})^{*}*_{N}\mathcal{Y}=\mathcal{Y}.
\end{align*}

\end{proof}

\begin{remark}
The Moore-Penrose inverse of tensor $\mathcal{A}$ over the complex number field was discussed in \cite{Sun}. Theorem \ref{03theorem41} not only extend it to the quaternion algebra, but also give the positions of $d_{i}^{-1}$ in the real tensor $\mathcal{B}^{\dag}$. Note that the real tensor $\mathcal{B}^{\dag}$ does not necessarily satisfy the following condition:
\begin{align}
\mathcal{B}^{\dag}_{j_{1}\cdots  j_{N}i_{1}\cdots i_{N}}=
 0~~\mbox{if}~~(j_{1}\cdots  j_{N})\neq(i_{1}\cdots i_{N}).
\end{align}
\end{remark}

\begin{example}
The Moore-Penrose inverse of the quaternion tensor $\mathcal{A}$ in Example \ref{27example2} can be expressed as
\begin{align*}
\mathcal{A}^{\dag}=\mathcal{V}*_{2}\mathcal{B}^{\dag}*_{2}\mathcal{U}^{*},
\end{align*}where the unitary quaternion tensors $\mathcal{V}$ and $\mathcal{U}$ are given in Example \ref{27example2} and
$\mathcal{B}^{\dag}\in \mathbb{R}^{3\times 2\times 2\times 2}$ is a real tensor and
\begin{align*}
\mathcal{B}^{\dag}_{j_{1}j_{2}i_{1}i_{2}}=
 \left\{\begin{array}{c}
 1, ~\mbox{if}~(j_{1}j_{2}i_{1}i_{2})=(1111),\\
 \frac{1}{\sqrt{2}}, ~\mbox{if}~(j_{1}j_{2}i_{1}i_{2})=(2121),\\
 \frac{1}{2}, ~\mbox{if}~(j_{1}j_{2}i_{1}i_{2})=(3112),\\
 \frac{1}{4}, ~\mbox{if}~(j_{1}j_{2}i_{1}i_{2})=(1222),\\
 0,~\mbox{otherwise}.
 \end{array}
  \right.
\end{align*}
\end{example}

It is easy to obtain the following results.
\begin{proposition}
 For the quaternion tensor $\mathcal{A}\in\mathbb{H}^{I_{1}\times \cdots \times I_{N}\times J_{1}\times \cdots \times J_{N}}$, the symbols $\mathcal{L}_{\mathcal{A}}$ and $\mathcal{R}_{\mathcal{A}}$ stand for
\begin{align*}
\mathcal{L}_{\mathcal{A}}=\mathcal{I}-\mathcal{A}^{\dag}*_{N}\mathcal{A},\quad
\mathcal{R}_{\mathcal{A}}=\mathcal{I}-\mathcal{A}*_{N}\mathcal{A}^{\dag}.
\end{align*}Then\\
$(1)$ $(\mathcal{A}^{*})^{\dag}=(\mathcal{A}^{\dag})^{*}$;\\
$(2)$ $(\mathcal{A}^{\eta*})^{\dag}=(\mathcal{A}^{\dag})^{\eta*}$;\\
$(3)$ $\mathcal{L}_{\mathcal{A}}=\mathcal{L}^{*}_{\mathcal{A}}$, $\mathcal{R}_{\mathcal{A}}=\mathcal{R}^{*}_{\mathcal{A}}$;\\
$(4)$ $\mathcal{L}^{\eta*}_{\mathcal{A}}=\mathcal{R}_{\mathcal{A}^{\eta*}}$, $\mathcal{R}^{\eta*}_{\mathcal{A}}=\mathcal{L}_{\mathcal{A}^{\eta*}}$.
\end{proposition}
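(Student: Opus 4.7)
The plan is to prove each of the four items by reducing to the defining Moore--Penrose equations in Definition~\ref{23defi41} and invoking the uniqueness established in the proof of Theorem~\ref{03theorem41}. The guiding observation is that both $(\cdot)^{*}$ and $(\cdot)^{\eta*}$ reverse the Einstein product and fix the identity tensor: indeed $\eta^{2}=-1$ gives $\mathcal{I}^{\eta*}=-\eta\mathcal{I}\eta=\mathcal{I}$, and the reversal rule for $(\cdot)^{\eta*}$ is already recorded in the proposition immediately preceding Theorem~\ref{03theorem41}'s sibling decomposition.

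For (1), I set $\mathcal{Y}=(\mathcal{A}^{\dag})^{*}$ and verify that $\mathcal{Y}$ satisfies the four conditions of Definition~\ref{23defi41} with $\mathcal{A}$ replaced by $\mathcal{A}^{*}$. Applying $(\cdot)^{*}$ to each of the MP equations for $\mathcal{A}$ and using $(\mathcal{A}*_{N}\mathcal{B})^{*}=\mathcal{B}^{*}*_{N}\mathcal{A}^{*}$ yields exactly the MP equations for $\mathcal{Y}$ relative to $\mathcal{A}^{*}$; uniqueness then forces $(\mathcal{A}^{*})^{\dag}=\mathcal{Y}$. For (2), the same template applies with $(\cdot)^{\eta*}$ in place of $(\cdot)^{*}$. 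The only subtlety is that conditions~(3) and~(4) of the definition still involve the ordinary $*$, so one needs the auxiliary identity $(\mathcal{X}^{\eta*})^{*}=(\mathcal{X}^{*})^{\eta*}$. This follows from $\mathcal{X}^{\eta*}=-\eta\mathcal{X}^{*}\eta$, $\bar{\eta}=-\eta$, and the scalar-transpose rule $(\alpha\mathcal{Y}\beta)^{*}=\bar{\beta}\mathcal{Y}^{*}\bar{\alpha}$. A direct computation then gives $(\mathcal{A}^{\eta*}*_{N}(\mathcal{A}^{\dag})^{\eta*})^{*}=((\mathcal{A}^{\dag}*_{N}\mathcal{A})^{\eta*})^{*}=((\mathcal{A}^{\dag}*_{N}\mathcal{A})^{*})^{\eta*}=(\mathcal{A}^{\dag}*_{N}\mathcal{A})^{\eta*}=\mathcal{A}^{\eta*}*_{N}(\mathcal{A}^{\dag})^{\eta*}$, with the fourth MP axiom handled symmetrically.

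Items~(3) and~(4) then drop out immediately. For~(3), condition~(4) of Definition~\ref{23defi41} asserts that $\mathcal{A}^{\dag}*_{N}\mathcal{A}$ is Hermitian, so $\mathcal{L}_{\mathcal{A}}^{*}=\mathcal{I}-(\mathcal{A}^{\dag}*_{N}\mathcal{A})^{*}=\mathcal{L}_{\mathcal{A}}$, and the statement for $\mathcal{R}_{\mathcal{A}}$ follows from condition~(3) in the same way. For~(4), applying $(\cdot)^{\eta*}$ to $\mathcal{L}_{\mathcal{A}}$ and invoking $\mathcal{I}^{\eta*}=\mathcal{I}$, product reversal, and part~(2) gives $\mathcal{L}_{\mathcal{A}}^{\eta*}=\mathcal{I}-\mathcal{A}^{\eta*}*_{N}(\mathcal{A}^{\eta*})^{\dag}=\mathcal{R}_{\mathcal{A}^{\eta*}}$, with $\mathcal{R}_{\mathcal{A}}^{\eta*}=\mathcal{L}_{\mathcal{A}^{\eta*}}$ proven the same way. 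The main (and genuinely only) obstacle in the whole proposition is the bookkeeping in~(2), namely checking that $*$-self-adjointness commutes correctly with $(\cdot)^{\eta*}$ via the identity $(\mathcal{X}^{\eta*})^{*}=(\mathcal{X}^{*})^{\eta*}$; once that is in place, the remainder is a mechanical application of MP uniqueness.
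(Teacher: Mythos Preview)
Your argument is correct. The paper itself does not supply a proof of this proposition; it simply introduces it with the phrase ``It is easy to obtain the following results'' and moves on. So there is no approach of the paper's to compare yours against.

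Your route --- verifying the four Moore--Penrose axioms for the candidate $(\mathcal{A}^{\dag})^{*}$ (respectively $(\mathcal{A}^{\dag})^{\eta*}$) and invoking the uniqueness from Theorem~\ref{03theorem41} --- is exactly the natural one, and your handling of the only nontrivial point, the commutation identity $(\mathcal{X}^{\eta*})^{*}=(\mathcal{X}^{*})^{\eta*}$ needed for the Hermiticity axioms in part~(2), is correct: with $\bar{\eta}=-\eta$ one indeed gets $(-\eta\mathcal{X}^{*}\eta)^{*}=\bar{\eta}\,\mathcal{X}\,\overline{(-\eta)}=-\eta\,\mathcal{X}\,\eta=(\mathcal{X}^{*})^{\eta*}$. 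Parts~(3) and~(4) then follow immediately from Definition~\ref{23defi41} and the product-reversal rule for $(\cdot)^{\eta*}$, as you say.
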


\section{\textbf{The general solution to the quaternion tensor equation (\ref{tensorequ01})}}

Our goal of this section is to give some solvability conditions for the generalized Sylvester quaternion tensor equation (\ref{tensorequ01}) to posses a solution and to provide an expression of this general solution when the solvability conditions are met. As an application of the quaternion tensor equation (\ref{tensorequ01}), we can derive some solvability conditions and the general $\eta$-Hermitian solution to a quaternion tensor equation involving $\eta$-Hermicity, i.e.,
\begin{align}\label{htensorequ01}
\mathcal{A}*_{N}\mathcal{X}*_{N}\mathcal{A}^{\eta*}+\mathcal{C}*_{N}\mathcal{Y}*_{N}\mathcal{C}^{\eta*}=\mathcal{E},
\quad \mathcal{X}=\mathcal{X}^{\eta*},\quad \mathcal{Y}=\mathcal{Y}^{\eta*},
\end{align}where $\mathcal{A},~\mathcal{C}$ and $\mathcal{E}=\mathcal{E}^{\eta*}$ are given quaternion tensors with suitable order.

Now we give the main theorem of this section.
\begin{theorem}\label{theorem01}
Let $\mathcal{A}\in\mathbb{H}^{I_{1}\times \cdots \times I_{N}\times J_{1}\times \cdots \times J_{N}},
\mathcal{B}\in\mathbb{H}^{K_{1}\times \cdots \times K_{M}\times L_{1}\times \cdots \times L_{M}},
\mathcal{C}\in\mathbb{H}^{I_{1}\times \cdots \times I_{N}\times G_{1}\times \cdots \times G_{N}},$ $
\mathcal{D}\in\mathbb{H}^{H_{1}\times \cdots \times H_{M}\times L_{1}\times \cdots \times L_{M}},$ and
$\mathcal{E}\in\mathbb{H}^{I_{1}\times \cdots \times I_{N}\times L_{1}\times \cdots \times L_{M}}$. Set
\begin{align}
\mathcal{P}=(\mathcal{R_{A}})*_{N}\mathcal{C},\quad  \mathcal{Q}=\mathcal{D}*_{M}\mathcal{L_{B}},\quad \mathcal{S}=\mathcal{C}*_{N}\mathcal{L_{P}}.
\end{align}
Then the generalized Sylvester quaternion tensor equation (\ref{tensorequ01}) is consistent if and only if
\begin{align}\label{25equ022}
(\mathcal{R}_{\mathcal{P}})*_{N}(\mathcal{R}_{\mathcal{A}})*_{N}\mathcal{E}=0,
\mathcal{E}*_{M}(\mathcal{L}_{\mathcal{B}})*_{M}(\mathcal{L}_{\mathcal{Q}})=0,
(\mathcal{R}_{\mathcal{A}})*_{N}\mathcal{E}*_{M}\mathcal{L_{D}}=0,
(\mathcal{R}_{\mathcal{C}})*_{N}\mathcal{E}*_{M}\mathcal{L_{B}}=0.
\end{align}
In this case, the general solution to (\ref{tensorequ01}) can be
expressed as
\begin{align}\label{25equ023}
\mathcal{X}=&\mathcal{A}^{\dag}*_{N}\mathcal{E}*_{M}\mathcal{B}^{\dag}-
\mathcal{A}^{\dag}*_{N}\mathcal{C}*_{N}\mathcal{P}^{\dag}*_{N}\mathcal{E}*_{M}\mathcal{B}^{\dag}
-\mathcal{A}^{\dag}*_{N}\mathcal{S}*_{N}\mathcal{C}^{\dag}*_{N}\mathcal{E}*_{M}\mathcal{Q}^{\dag}*_{M}\mathcal{D}*_{M}\mathcal{B}^{\dag}
\nonumber\\
&-\mathcal{A}^{\dag}*_{N}\mathcal{S}*_{N}\mathcal{U}_{2}*_{M}(\mathcal{R_{Q}})*_{M}\mathcal{D}*_{M}\mathcal{B}^{\dag}
+(\mathcal{L_{A}})*_{N}\mathcal{U}_{4}+\mathcal{U}_{5}*_{M}\mathcal{R_{B}},
\end{align}
\begin{align}\label{25equ024}
\mathcal{Y}=&\mathcal{P}^{\dag}*_{N}\mathcal{E}*_{M}\mathcal{D}^{\dag}+
\mathcal{S}^{\dag}*_{N}\mathcal{S}*_{N}\mathcal{C}^{\dag}*_{N}\mathcal{E}*_{M}\mathcal{Q}^{\dag}
+(\mathcal{L_{P}})*_{N}(\mathcal{L_{S}})*_{N}\mathcal{U}_{1}\nonumber\\&
+(\mathcal{L_{P}})*_{N}\mathcal{U}_{2}*_{M}\mathcal{R_{Q}}+
\mathcal{U}_{3}*_{M}\mathcal{R_{D}},
\end{align}
where $\mathcal{U}_{1},\mathcal{U}_{2},\mathcal{U}_{3},\mathcal{U}_{4},\mathcal{U}_{5}$ are arbitrary quaternion tensors with suitable order.
\end{theorem}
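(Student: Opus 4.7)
The plan is to reduce the tensor equation (\ref{tensorequ01}) to an equivalent matrix equation over $\mathbb{H}$ via the isomorphism $f$ of Definition \ref{21deff}, invoke the established solvability theory for the generalized Sylvester matrix equation $AXB+CYD=E$ over quaternions, and translate the result back. First I would apply $f$ to both sides of (\ref{tensorequ01}); using Lemma \ref{03lemma22}(2) repeatedly (once for each Einstein product) converts it into
\begin{equation*}
A\,X\,B+C\,Y\,D=E,
\end{equation*}
where $A=f(\mathcal{A})$, $B=f(\mathcal{B})$, $C=f(\mathcal{C})$, $D=f(\mathcal{D})$, $E=f(\mathcal{E})$ and the unknowns are $X=f(\mathcal{X})$, $Y=f(\mathcal{Y})$. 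Because $f$ is a bijection by Lemma \ref{03lemma22}(1) and Lemma \ref{21lemma23}, the tensor equation is consistent if and only if its matrix image is, and their general solutions correspond under $f,f^{-1}$.

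The next step is to verify that $f$ is compatible with all the auxiliary operations appearing in the statement. From the definition of the conjugate transpose one checks $f(\mathcal{A}^{*})=f(\mathcal{A})^{*}$. Combining this with the SVD formula in Theorem \ref{03theorem41} and the multiplicativity of $f$ gives $f(\mathcal{A}^{\dag})=f(\mathcal{A})^{\dag}$. Hence the projection tensors transfer as $f(\mathcal{L}_{\mathcal{A}})=L_{A}=I-A^{\dag}A$ and $f(\mathcal{R}_{\mathcal{A}})=R_{A}=I-AA^{\dag}$, so that $f(\mathcal{P})=R_{A}C$, $f(\mathcal{Q})=DL_{B}$, and $f(\mathcal{S})=CL_{P}$, exactly matching the matrix-level definitions.

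I would then invoke the known solvability theorem for $AXB+CYD=E$ over the quaternion algebra (obtained by the standard Moore-Penrose/projection-reduction technique, e.g.\ the quaternion version of Wang's theorem), which asserts consistency iff $R_{P}R_{A}E=0$, $EL_{B}L_{Q}=0$, $R_{A}EL_{D}=0$, $R_{C}EL_{B}=0$, and provides the general pair $(X,Y)$ as the matrix analogues of (\ref{25equ023})--(\ref{25equ024}) with free matrix parameters $U_{1},\ldots,U_{5}$. Applying $f^{-1}$ to these conditions yields (\ref{25equ022}) by the compatibility established above, and applying $f^{-1}$ to the formulas for $X,Y$, using (\ref{21equ211}) to turn every matrix product into $*_{N}$ or $*_{M}$, produces the expressions (\ref{25equ023})--(\ref{25equ024}) with $\mathcal{U}_{i}=f^{-1}(U_{i})$. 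The arbitrariness of the $U_{i}$ transfers to arbitrariness of the $\mathcal{U}_{i}$ since $f^{-1}$ is a bijection.

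The main obstacle is not the algebra of Moore-Penrose projections at the matrix level (that is a classical, well-documented calculation) but the bookkeeping of tensor orders and dimensions: one must check that every intermediate product $\mathcal{A}^{\dag}*_{N}\mathcal{C}$, $\mathcal{D}*_{M}\mathcal{B}^{\dag}$, $\mathcal{R}_{\mathcal{P}}*_{N}\mathcal{R}_{\mathcal{A}}*_{N}\mathcal{E}$, etc., has indices that line up so that $f$ commutes with the operation. Because the index bookkeeping in Definition \ref{21deff} is systematic and $f$ is multiplicative on matched formats, this reduces to confirming that the dimension patterns of $\mathcal{A},\mathcal{B},\mathcal{C},\mathcal{D},\mathcal{E}$ are precisely those needed for the matrix analogue to be well defined, which the hypotheses of the theorem guarantee. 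Once this is in place, the proof is a direct transcription from the matrix Sylvester equation to its tensor counterpart through the isomorphism $f$.
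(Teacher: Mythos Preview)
Your approach is correct in spirit but takes a genuinely different route from the paper. The paper does \emph{not} pass through the matrix isomorphism $f$ at all: it proves Theorem~\ref{theorem01} by a direct, self-contained computation at the tensor level. First it verifies necessity by multiplying a putative solution $(\mathcal{X}^1,\mathcal{Y}^1)$ on the left and right by the relevant projectors and watching everything vanish; then it checks sufficiency by substituting the formulas (\ref{25equ023})--(\ref{25equ024}) into (\ref{tensorequ01}) and simplifying through a sequence of Moore--Penrose identities (e.g.\ $\mathcal{C}*_N\mathcal{S}^{\dag}*_N\mathcal{S}=\mathcal{S}$, $\mathcal{P}^{\dag}*_N\mathcal{A}=0$, $\mathcal{B}^{\dag}*_M\mathcal{B}*_M\mathcal{Q}^{\dag}*_M\mathcal{Q}=0$); finally it shows that any solution $(\mathcal{X}^0,\mathcal{Y}^0)$ is recovered by a specific choice of the $\mathcal{U}_i$. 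Your proposal instead outsources all of this to the known quaternion matrix theorem for $AXB+CYD=E$ and uses $f$ only as a transport mechanism. What you gain is brevity and the avoidance of the long projector calculus; what the paper gains is a self-contained argument that does not presuppose the matrix result and that makes the tensor-level identities explicit.

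One technical point you should address if you pursue your route: Definition~\ref{21deff} and Lemma~\ref{03lemma22} are stated for tensors whose two index blocks have the \emph{same} length $N$, whereas the unknown $\mathcal{X}\in\mathbb{H}^{J_1\times\cdots\times J_N\times K_1\times\cdots\times K_M}$ and the right-hand side $\mathcal{E}\in\mathbb{H}^{I_1\times\cdots\times I_N\times L_1\times\cdots\times L_M}$ carry blocks of lengths $N$ and $M$. The extension of $f$ to this mixed setting (row-flatten the first $N$ modes, column-flatten the last $M$) is entirely routine, and the multiplicativity $f(\mathcal{A}*_N\mathcal{X})=f(\mathcal{A})f(\mathcal{X})$ and $f(\mathcal{X}*_M\mathcal{B})=f(\mathcal{X})f(\mathcal{B})$ still hold, but you should state this explicitly rather than silently rely on Lemma~\ref{03lemma22} as written. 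With that patched, your reduction is valid.
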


\begin{proof}
$\Longrightarrow:$ If the generalized Sylvester quaternion tensor equation (\ref{tensorequ01}) has a
solution, say $(\mathcal{X}^{1},\mathcal{Y}^{1})$, then we have
\begin{align}\label{25equ025}
\mathcal{A}*_{N}\mathcal{X}^{1}*_{M}\mathcal{B}+\mathcal{C}*_{N}\mathcal{Y}^{1}*_{M}\mathcal{D}=\mathcal{E}.
\end{align}It follows from
\begin{align}
(\mathcal{R}_{\mathcal{A}})*_{N}\mathcal{A}=0,
~\mathcal{B}*_{M}\mathcal{L}_{\mathcal{B}}=0,~
(\mathcal{R}_{\mathcal{C}})*_{N}\mathcal{C}=0,~
\mathcal{D}*_{M}\mathcal{L_{D}}=0,
\end{align}
\begin{align}
~\mathcal{P}=(\mathcal{R_{A}})*_{N}C,
~\mathcal{Q}=\mathcal{D}*_{M}\mathcal{L_{B}},
~(\mathcal{R}_{\mathcal{P}})*_{N}\mathcal{P}=0,~
\mathcal{Q}*_{M}\mathcal{L}_{\mathcal{Q}}=0,
\end{align}
and (\ref{25equ025}) that
\begin{align}
(\mathcal{R}_{\mathcal{P}})*_{N}(\mathcal{R}_{\mathcal{A}})*_{N}\mathcal{E}&=
(\mathcal{R}_{\mathcal{P}})*_{N}(\mathcal{R}_{\mathcal{A}})*_{N}
(\mathcal{A}*_{N}\mathcal{X}^{1}*_{M}\mathcal{B}+\mathcal{C}*_{N}\mathcal{Y}^{1}*_{M}\mathcal{D})\nonumber
\\&=(\mathcal{R}_{\mathcal{P}})*_{N}(\mathcal{R}_{\mathcal{A}})*_{N}
\mathcal{A}*_{N}\mathcal{X}^{1}*_{N}\mathcal{A}^{*}+
(\mathcal{R}_{\mathcal{P}})*_{N}(\mathcal{R}_{\mathcal{A}})*_{N}\mathcal{C}*_{N}\mathcal{Y}^{1}*_{N}\mathcal{C}^{*}
\nonumber
\\&=(\mathcal{R}_{\mathcal{P}})*_{N}(\mathcal{R}_{\mathcal{A}})*_{N}
\mathcal{A}*_{N}\mathcal{X}^{1}*_{N}\mathcal{A}^{*}+
(\mathcal{R}_{\mathcal{P}})*_{N}\mathcal{P}*_{N}\mathcal{Y}^{1}*_{N}\mathcal{C}^{*}\nonumber\\
&=0,
\end{align}
\begin{align}
\mathcal{E}*_{M}(\mathcal{L}_{\mathcal{B}})*_{M}(\mathcal{L}_{\mathcal{Q}})&=
(\mathcal{A}*_{N}\mathcal{X}^{1}*_{M}\mathcal{B}+\mathcal{C}*_{N}\mathcal{Y}^{1}*_{M}\mathcal{D})*_{M}
(\mathcal{L}_{\mathcal{B}})*_{M}(\mathcal{L}_{\mathcal{Q}})\nonumber\\
&=\mathcal{A}*_{N}\mathcal{X}^{1}*_{M}\mathcal{B}*_{M}
(\mathcal{L}_{\mathcal{B}})*_{M}(\mathcal{L}_{\mathcal{Q}})+
\mathcal{C}*_{N}\mathcal{Y}^{1}*_{M}\mathcal{D}*_{M}
(\mathcal{L}_{\mathcal{B}})*_{M}(\mathcal{L}_{\mathcal{Q}})\nonumber\\
&=\mathcal{A}*_{N}\mathcal{X}^{1}*_{M}\mathcal{B}*_{M}
(\mathcal{L}_{\mathcal{B}})*_{M}(\mathcal{L}_{\mathcal{Q}})+
\mathcal{C}*_{N}\mathcal{Y}^{1}*_{M}\mathcal{Q}*_{M}(\mathcal{L}_{\mathcal{Q}})\nonumber\\
&=0,
\end{align}
\begin{align}
(\mathcal{R}_{\mathcal{A}})*_{N}\mathcal{E}*_{M}\mathcal{L_{D}}&=
(\mathcal{R}_{\mathcal{A}})*_{N}(\mathcal{A}*_{N}\mathcal{X}^{1}*_{M}\mathcal{B}+\mathcal{C}*_{N}\mathcal{Y}^{1}*_{M}\mathcal{D})*_{M}\mathcal{L_{D}}\nonumber\\&=
(\mathcal{R}_{\mathcal{A}})*_{N}\mathcal{A}*_{N}\mathcal{X}^{1}*_{M}\mathcal{B}*_{M}\mathcal{L_{D}}+
\mathcal{C}*_{N}\mathcal{Y}^{1}*_{M}\mathcal{D}*_{M}\mathcal{L_{D}}
\nonumber\\&=0,
\end{align}
\begin{align}
(\mathcal{R}_{\mathcal{C}})*_{N}\mathcal{E}*_{M}\mathcal{L_{B}}&=
(\mathcal{R}_{\mathcal{C}})*_{N}(\mathcal{A}*_{N}\mathcal{X}^{1}*_{M}\mathcal{B}
+\mathcal{C}*_{N}\mathcal{Y}^{1}*_{M}\mathcal{D})*_{M}\mathcal{L_{B}}\nonumber\\
&=(\mathcal{R}_{\mathcal{C}})*_{N}\mathcal{A}*_{N}\mathcal{X}^{1}*_{M}\mathcal{B}*_{M}\mathcal{L_{B}}+
(\mathcal{R}_{\mathcal{C}})*_{N}\mathcal{C}*_{N}\mathcal{Y}^{1}*_{M}\mathcal{D}*_{M}\mathcal{L_{B}}\nonumber\\
&=0.
\end{align}

$\Longleftarrow:$ Now we want to prove that the tensors $\mathcal{X}$ and $\mathcal{Y}$ having the form of (\ref{25equ023}) and (\ref{25equ024}), respectively, are a solution to (\ref{tensorequ01}) under the equalities in (\ref{25equ022}). Substituting (\ref{25equ023}) and (\ref{25equ024}) into (\ref{tensorequ01}) yields
\begin{align}\label{28equ212}
&\mathcal{A}*_{N}\mathcal{X}*_{M}\mathcal{B}+\mathcal{C}*_{N}\mathcal{Y}*_{M}\mathcal{D}\nonumber\\=&
\mathcal{A}*_{N}\mathcal{A}^{\dag}*_{N}\mathcal{E}*_{M}\mathcal{B}^{\dag}*_{M}\mathcal{B}-
\mathcal{A}*_{N}\mathcal{A}^{\dag}*_{N}\mathcal{C}*_{N}\mathcal{P}^{\dag}*_{N}\mathcal{E}*_{M}\mathcal{B}^{\dag}*_{M}\mathcal{B}\nonumber\\&
-\mathcal{A}*_{N}\mathcal{A}^{\dag}*_{N}\mathcal{S}*_{N}\mathcal{C}^{\dag}*_{N}\mathcal{E}*_{M}\mathcal{Q}^{\dag}*_{M}\mathcal{D}*_{M}\mathcal{B}^{\dag}*_{M}\mathcal{B}
\nonumber\\
&-\mathcal{A}*_{N}\mathcal{A}^{\dag}*_{N}\mathcal{S}*_{N}\mathcal{U}_{2}*_{M}(\mathcal{R_{Q}})*_{M}\mathcal{D}*_{M}\mathcal{B}^{\dag}*_{M}\mathcal{B}
+\mathcal{C}*_{N}\mathcal{P}^{\dag}*_{N}\mathcal{E}*_{M}\mathcal{D}^{\dag}*_{M}\mathcal{D}\nonumber\\&+
\mathcal{C}*_{N}\mathcal{S}^{\dag}*_{N}\mathcal{S}*_{N}\mathcal{C}^{\dag}*_{N}\mathcal{E}*_{M}\mathcal{Q}^{\dag}*_{M}\mathcal{D}
+\mathcal{S}*_{N}\mathcal{U}_{2}*_{M}(\mathcal{R_{Q}})*_{M}\mathcal{D}.
\end{align}From
\begin{align}
(\mathcal{R_{A}})*_{N}\mathcal{S}=(\mathcal{R_{A}})*_{N}\mathcal{C}*_{N}\mathcal{L_{P}}=
\mathcal{P}*_{N}\mathcal{L_{P}}=0
\end{align} and
\begin{align}\label{26equ214}
\mathcal{D}*_{M}\mathcal{B}^{\dag}*_{M}\mathcal{B}=
\mathcal{D}-\mathcal{Q},
\end{align}
we see that
\begin{align}\label{26equ215}
\mathcal{A}*_{N}\mathcal{A}^{\dag}*_{N}\mathcal{S}*_{N}=\mathcal{S}*_{N},\qquad
(\mathcal{R_{Q}})*_{M}\mathcal{D}*_{M}\mathcal{B}^{\dag}*_{M}\mathcal{B}=\mathcal{R_{Q}}*_{M}\mathcal{D}.
\end{align}
Hence, we have
\begin{align}\label{28equ216}
&\mathcal{S}*_{N}\mathcal{U}_{2}*_{M}(\mathcal{R_{Q}})*_{M}\mathcal{D}-
\mathcal{A}*_{N}\mathcal{A}^{\dag}*_{N}\mathcal{S}*_{N}\mathcal{U}_{2}*_{M}(\mathcal{R_{Q}})*_{M}\mathcal{D}*_{M}\mathcal{B}^{\dag}*_{M}\mathcal{B}\nonumber\\
=&\mathcal{S}*_{N}\mathcal{U}_{2}*_{M}(\mathcal{R_{Q}})*_{M}\mathcal{D}-
\mathcal{S}*_{N}\mathcal{U}_{2}*_{M}(\mathcal{R_{Q}})*_{M}\mathcal{D}\nonumber\\
=&0.
\end{align}
From
\begin{align}\label{29equ217}
\mathcal{P}^{\dag}*_{N}\mathcal{P}*_{N}\mathcal{S}^{\dag}*_{N}\mathcal{S}&=
(\mathcal{P}^{\dag}*_{N}\mathcal{P})^{*}*_{N}(\mathcal{S}^{\dag}*_{N}\mathcal{S})^{*}\nonumber\\&
=
(\mathcal{S}^{\dag}*_{N}\mathcal{S}*_{N}\mathcal{P}^{\dag}*_{N}\mathcal{P})^{*}\nonumber\\&
=(\mathcal{S}^{\dag}*_{N}\mathcal{C}*_{N}(\mathcal{L_{P}})*_{N}\mathcal{P}^{\dag}*_{N}\mathcal{P})^{*}\nonumber\\&
=0,
\end{align}
we infer that
\begin{align}
\mathcal{C}*_{N}\mathcal{S}^{\dag}*_{N}\mathcal{S}-\mathcal{S}
&=\mathcal{C}*_{N}\mathcal{S}^{\dag}*_{N}\mathcal{S}-\mathcal{S}*_{N}\mathcal{S}^{\dag}*_{N}\mathcal{S}\nonumber\\&
=(\mathcal{C}-\mathcal{S})*_{N}\mathcal{S}^{\dag}*_{N}\mathcal{S}\nonumber\\&
=\mathcal{C}*_{N}\mathcal{P}^{\dag}*_{N}\mathcal{P}*_{N}\mathcal{S}^{\dag}*_{N}\mathcal{S}\nonumber\\&
=0,
\end{align}
i.e.,
\begin{align}\label{26equ219}
\mathcal{C}*_{N}\mathcal{S}^{\dag}*_{N}\mathcal{S}=\mathcal{S}.
\end{align}
Since $\mathcal{Q}*_{M}\mathcal{B}^{\dag}=\mathcal{D}*_{M}(\mathcal{L_{B}})*_{M}\mathcal{B}^{\dag}=0$, we have
\begin{align}\label{26equ220}
\mathcal{B}^{\dag}*_{M}\mathcal{B}*_{M}\mathcal{Q}^{\dag}*_{M}\mathcal{Q}
=(\mathcal{B}^{\dag}*_{M}\mathcal{B})^{*}*_{M}(\mathcal{Q}^{\dag}*_{M}\mathcal{Q})^{*}
=(\mathcal{Q}^{\dag}*_{M}\mathcal{Q}*_{M}\mathcal{B}^{\dag}*_{M}\mathcal{B})^{*}=0.
\end{align}
Thus from (\ref{26equ214}), (\ref{26equ215}), (\ref{26equ219}), (\ref{26equ220}) and $\mathcal{E}*_{M}(\mathcal{L}_{\mathcal{B}})*_{M}(\mathcal{L}_{\mathcal{Q}})=0$, we infer that
\begin{align}\label{28equ221}
&\mathcal{C}*_{N}\mathcal{S}^{\dag}*_{N}\mathcal{S}*_{N}\mathcal{C}^{\dag}*_{N}\mathcal{E}*_{M}\mathcal{Q}^{\dag}*_{M}\mathcal{D}-
\mathcal{A}*_{N}\mathcal{A}^{\dag}*_{N}\mathcal{S}*_{N}\mathcal{C}^{\dag}*_{N}\mathcal{E}*_{M}\mathcal{Q}^{\dag}*_{M}\mathcal{D}*_{M}
\mathcal{B}^{\dag}*_{M}\mathcal{B}\nonumber\\
=&\mathcal{S}*_{N}\mathcal{C}^{\dag}*_{N}\mathcal{E}*_{M}\mathcal{Q}^{\dag}*_{M}\mathcal{D}-
\mathcal{S}*_{N}\mathcal{C}^{\dag}*_{N}\mathcal{E}*_{M}\mathcal{Q}^{\dag}*_{M}\mathcal{D}*_{M}
\mathcal{B}^{\dag}*_{M}\mathcal{B}\nonumber\\
=&\mathcal{S}*_{N}\mathcal{C}^{\dag}*_{N}\mathcal{E}*_{M}\mathcal{Q}^{\dag}*_{M}\mathcal{D}-
\mathcal{S}*_{N}\mathcal{C}^{\dag}*_{N}\mathcal{E}*_{M}\mathcal{Q}^{\dag}*_{M}(\mathcal{D}-\mathcal{Q})\nonumber\\
=&\mathcal{S}*_{N}\mathcal{C}^{\dag}*_{N}\mathcal{E}*_{M}\mathcal{Q}^{\dag}*_{M}\mathcal{Q}\nonumber\\
=&\mathcal{S}*_{N}\mathcal{C}^{\dag}*_{N}\mathcal{E}*_{M}(\mathcal{B}^{\dag}*_{M}\mathcal{B}+\mathcal{L_{B}})*_{M}\mathcal{Q}^{\dag}*_{M}\mathcal{Q}\nonumber\\
=&\mathcal{S}*_{N}\mathcal{C}^{\dag}*_{N}\mathcal{E}*_{M}\mathcal{B}^{\dag}*_{M}\mathcal{B}*_{M}\mathcal{Q}^{\dag}*_{M}\mathcal{Q}
+\mathcal{S}*_{N}\mathcal{C}^{\dag}*_{N}\mathcal{E}*_{M}(\mathcal{L_{B}})*_{M}\mathcal{Q}^{\dag}*_{M}\mathcal{Q}\nonumber\\
=&\mathcal{S}*_{N}\mathcal{C}^{\dag}*_{N}\mathcal{E}*_{M}\mathcal{L_{B}}.
\end{align}
Since $\mathcal{A}^{\dag}*_{N}\mathcal{P}=0,$ we see from $(\mathcal{R}_{\mathcal{A}})*_{N}\mathcal{E}*_{M}\mathcal{L_{D}}=0$ that
\begin{align}
\mathcal{C}*_{N}\mathcal{P}^{\dag}*_{N}\mathcal{E}*_{M}(\mathcal{L_{D}})=&
\mathcal{C}*_{N}\mathcal{P}^{\dag}*_{N}\mathcal{A}*_{N}\mathcal{A}^{\dag}*_{N}\mathcal{E}*_{M}(\mathcal{L_{D}})
\nonumber\\=&\mathcal{C}*_{N}\mathcal{P}^{\dag}*_{N}\mathcal{P}*_{N}\mathcal{P}^{\dag}*_{N}\mathcal{A}*_{N}\mathcal{A}^{\dag}*_{N}\mathcal{E}*_{M}(\mathcal{L_{D}})
\nonumber\\=&\mathcal{C}*_{N}\mathcal{P}^{\dag}*_{N}(\mathcal{P}*_{N}\mathcal{P}^{\dag})^{*}*_{N}(\mathcal{A}*_{N}\mathcal{A}^{\dag})^{*}*_{N}\mathcal{E}*_{M}(\mathcal{L_{D}})
\nonumber\\=&\mathcal{C}*_{N}\mathcal{P}^{\dag}*_{N}(\mathcal{A}*_{N}\mathcal{A}^{\dag}*_{N}\mathcal{P}*_{N}\mathcal{P}^{\dag})^{*}*_{N}\mathcal{E}*_{M}(\mathcal{L_{D}})
\nonumber\\=&0,
\end{align}
that is
\begin{align}\label{28equ223}
\mathcal{C}*_{N}\mathcal{P}^{\dag}*_{N}\mathcal{E}*_{M}\mathcal{D}^{\dag}*_{M}\mathcal{D}=\mathcal{C}*_{N}\mathcal{P}^{\dag}*_{N}\mathcal{E}.
\end{align}
It follows from (\ref{28equ223}) and
\begin{align}
(\mathcal{R}_{\mathcal{P}})*_{N}(\mathcal{R}_{\mathcal{A}})*_{N}\mathcal{E}=0,~(\mathcal{R}_{\mathcal{C}})*_{N}\mathcal{E}*_{M}\mathcal{L_{B}}=0,
\end{align}
\begin{align}
\mathcal{P}=\mathcal{C}-\mathcal{A}*_{N}\mathcal{A}^{\dag}*_{N}\mathcal{C},~
\mathcal{S}=\mathcal{C}*_{N}\mathcal{L_{P}}=\mathcal{C}-\mathcal{C}*_{N}\mathcal{P}^{\dag}*_{N}\mathcal{P},
\end{align}
\begin{align}\label{29equ225}
\mathcal{P}^{\dag}*_{N}\mathcal{A}&=\mathcal{P}^{\dag}*_{N}\mathcal{P}*_{N}\mathcal{P}^{\dag}*_{N}\mathcal{A}*_{N}\mathcal{A}^{\dag}*_{N}\mathcal{A}
\nonumber\\&=\mathcal{P}^{\dag}*_{N}(\mathcal{P}*_{N}\mathcal{P}^{\dag})^{*}*_{N}(\mathcal{A}*_{N}\mathcal{A}^{\dag})^{*}*_{N}\mathcal{A}\nonumber\\&
=\mathcal{P}^{\dag}*_{N}(\mathcal{A}*_{N}\mathcal{A}^{\dag}*_{N}\mathcal{P}*_{N}\mathcal{P}^{\dag})^{*}*_{N}\mathcal{A}\nonumber\\&=0,
\end{align}
that
\begin{align}\label{28equ226}
&\mathcal{C}*_{N}\mathcal{P}^{\dag}*_{N}\mathcal{E}*_{M}\mathcal{D}^{\dag}*_{M}\mathcal{D}-
\mathcal{A}*_{N}\mathcal{A}^{\dag}*_{N}\mathcal{C}*_{N}\mathcal{P}^{\dag}*_{N}\mathcal{E}*_{M}\mathcal{B}^{\dag}*_{M}\mathcal{B}\nonumber\\
=&\mathcal{C}*_{N}\mathcal{P}^{\dag}*_{N}\mathcal{E}+(\mathcal{P-C})*_{N}\mathcal{P}^{\dag}*_{N}\mathcal{E}*_{M}\mathcal{B}^{\dag}*_{M}\mathcal{B}\nonumber\\
=&\mathcal{C}*_{N}\mathcal{P}^{\dag}*_{N}\mathcal{E}+\mathcal{P}*_{N}\mathcal{P}^{\dag}*_{N}\mathcal{E}*_{M}\mathcal{B}^{\dag}*_{M}\mathcal{B}-
\mathcal{C}*_{N}\mathcal{P}^{\dag}*_{N}\mathcal{E}*_{M}\mathcal{B}^{\dag}*_{M}\mathcal{B}\nonumber\\
=&\mathcal{P}*_{N}\mathcal{P}^{\dag}*_{N}\mathcal{E}*_{M}\mathcal{B}^{\dag}*_{M}\mathcal{B}+
\mathcal{C}*_{N}\mathcal{P}^{\dag}*_{N}\mathcal{E}*_{M}(\mathcal{L_{B}})\nonumber\\
=&\mathcal{P}*_{N}\mathcal{P}^{\dag}*_{N}(\mathcal{R_{A}}+\mathcal{A*_{N}A^{\dag}*_{N}})*_{N}\mathcal{E}*_{M}\mathcal{B}^{\dag}*_{M}\mathcal{B}+
\mathcal{C}*_{N}\mathcal{P}^{\dag}*_{N}\mathcal{C}*_{N}\mathcal{C}^{\dag}*_{N}\mathcal{E}*_{M}(\mathcal{L_{B}})\nonumber\\
=&\mathcal{P}*_{N}\mathcal{P}^{\dag}*_{N}(\mathcal{R_{A}})*_{N}\mathcal{E}*_{M}\mathcal{B}^{\dag}*_{M}\mathcal{B}
+\mathcal{P}*_{N}\mathcal{P}^{\dag}*_{N}\mathcal{A*_{N}A^{\dag}*_{N}}\mathcal{E}*_{M}\mathcal{B}^{\dag}*_{M}\mathcal{B}\nonumber\\
&+\mathcal{C}*_{N}\mathcal{P}^{\dag}*_{N}\mathcal{C}*_{N}\mathcal{C}^{\dag}*_{N}\mathcal{E}*_{M}(\mathcal{L_{B}})-
\mathcal{C}*_{N}\mathcal{P}^{\dag}*_{N}\mathcal{A}*_{N}\mathcal{A}^{\dag}*_{N}\mathcal{C}*_{N}\mathcal{C}^{\dag}*_{N}\mathcal{E}*_{M}(\mathcal{L_{B}})\nonumber\\
=&\mathcal{P}*_{N}\mathcal{P}^{\dag}*_{N}(\mathcal{R_{A}})*_{N}\mathcal{E}*_{M}\mathcal{B}^{\dag}*_{M}\mathcal{B}+
\mathcal{C}*_{N}\mathcal{P}^{\dag}*_{N}(\mathcal{R_{A}})*_{N}\mathcal{C}*_{N}\mathcal{C}^{\dag}*_{N}\mathcal{E}*_{M}(\mathcal{L_{B}})\nonumber\\
=&(\mathcal{R_{A}})*_{N}\mathcal{E}*_{M}\mathcal{B}^{\dag}*_{M}\mathcal{B}+
\mathcal{C}*_{N}\mathcal{P}^{\dag}*_{N}\mathcal{P}*_{N}\mathcal{C}^{\dag}*_{N}\mathcal{E}*_{M}(\mathcal{L_{B}})\nonumber\\
=&(\mathcal{R_{A}})*_{N}\mathcal{E}*_{M}\mathcal{B}^{\dag}*_{M}\mathcal{B}+
(\mathcal{C}-\mathcal{S})*_{N}\mathcal{C}^{\dag}*_{N}\mathcal{E}*_{M}(\mathcal{L_{B}})\nonumber\\
=&(\mathcal{R_{A}})*_{N}\mathcal{E}*_{M}\mathcal{B}^{\dag}*_{M}\mathcal{B}+
\mathcal{E}*_{M}(\mathcal{L_{B}})-\mathcal{S}*_{N}\mathcal{C}^{\dag}*_{N}\mathcal{E}*_{M}(\mathcal{L_{B}}).
\end{align}
Thus from (\ref{28equ212}), (\ref{28equ216}), (\ref{28equ221}) and (\ref{28equ226}), we have
\begin{align}
&\mathcal{A}*_{N}\mathcal{X}*_{M}\mathcal{B}+\mathcal{C}*_{N}\mathcal{Y}*_{M}\mathcal{D}\nonumber\\=&
\mathcal{A}*_{N}\mathcal{A}^{\dag}*_{N}\mathcal{E}*_{M}\mathcal{B}^{\dag}*_{M}\mathcal{B}+
\mathcal{S}*_{N}\mathcal{C}^{\dag}*_{N}\mathcal{E}*_{M}\mathcal{L_{B}}
\nonumber\\&+(\mathcal{R_{A}})*_{N}\mathcal{E}*_{M}\mathcal{B}^{\dag}*_{M}\mathcal{B}+
\mathcal{E}*_{M}(\mathcal{L_{B}})-\mathcal{S}*_{N}\mathcal{C}^{\dag}*_{N}\mathcal{E}*_{M}(\mathcal{L_{B}})\nonumber\\=&
\mathcal{E}+\mathcal{A}*_{N}\mathcal{A}^{\dag}*_{N}\mathcal{E}*_{M}\mathcal{B}^{\dag}*_{M}\mathcal{B}-
\mathcal{A}*_{N}\mathcal{A}^{\dag}*_{N}\mathcal{E}*_{M}\mathcal{B}^{\dag}*_{M}\mathcal{B}
\nonumber\\&+\mathcal{E}*_{M}\mathcal{B}^{\dag}*_{M}\mathcal{B}-
\mathcal{E}*_{M}\mathcal{B}^{\dag}*_{M}\mathcal{B}
\nonumber\\=&\mathcal{E}.
\end{align}

Now we want to show that for any arbitrary solution, say $(\mathcal{X}^{0},\mathcal{Y}^{0})$ of the generalized Sylvester quaternion tensor equation (\ref{tensorequ01}) can be expressed as (\ref{25equ023}) and (\ref{25equ024}) under the equalities in (\ref{25equ022}). That is to say, we prove that there exist tensors $\mathcal{U}_{1},\mathcal{U}_{2},\mathcal{U}_{3},\mathcal{U}_{4},\mathcal{U}_{5}$ such that (\ref{25equ023}) and (\ref{25equ024}) hold if we substitute $\mathcal{X}$ and $\mathcal{Y}$ by $\mathcal{X}^{0}$ and $\mathcal{Y}^{0}$, respectively. Note that
\begin{align}
\mathcal{A}*_{N}\mathcal{X}^{0}*_{M}\mathcal{B}+\mathcal{C}*_{N}\mathcal{Y}^{0}*_{M}\mathcal{D}=\mathcal{E},
\end{align}
\begin{align}\label{29equ229}
 \mathcal{P}*_{N}\mathcal{Y}^{0}*_{M}\mathcal{D}=(\mathcal{R_{A}})*_{N}\mathcal{C}*_{N}\mathcal{Y}^{0}*_{M}\mathcal{D}=(\mathcal{R_{A}})*_{N}\mathcal{E},
 \end{align}
\begin{align}\label{29equ230}
\mathcal{C}*_{N}\mathcal{Y}^{0}*_{M}\mathcal{Q}=\mathcal{C}*_{N}\mathcal{Y}^{0}*_{M}\mathcal{D}*_{M}\mathcal{(L_{B})}
=\mathcal{E}*_{M}\mathcal{(L_{B})}.
\end{align}
Put
\begin{align}
\mathcal{U}_{1}=\mathcal{Y}^{0}*_{M}\mathcal{Q}*_{M}\mathcal{Q}^{\dag},~
\mathcal{U}_{2}=\mathcal{Y}^{0}*_{M}\mathcal{D}*_{M}\mathcal{D}^{\dag},~
\mathcal{U}_{3}=\mathcal{Y}^{0},~\mathcal{U}_{4}=\mathcal{X}^{0}*_{M}\mathcal{B}*_{M}\mathcal{B}^{\dag},~\mathcal{U}_{5}=\mathcal{X}^{0}.
\end{align}
From (\ref{29equ225}), (\ref{29equ229}),
\begin{align}
\mathcal{D}*_{M}\mathcal{D}^{\dag}*_{M}\mathcal{Q}*_{M}\mathcal{Q}^{\dag}=
\mathcal{D}*_{M}\mathcal{D}^{\dag}*_{M}\mathcal{D}*_{M}(\mathcal{L_{B}})*_{M}\mathcal{Q}^{\dag}=\mathcal{Q}*_{M}\mathcal{Q}^{\dag},
\end{align}
and
\begin{align}
(\mathcal{L_{P}})*_{N}\mathcal{S}^{\dag}*_{N}\mathcal{S}=(\mathcal{L_{P}})*_{N}(\mathcal{S}^{\dag}*_{N}\mathcal{S})^{*}=
(\mathcal{L_{P}})*_{N}\mathcal{S}^{*}*_{N}(\mathcal{S}^{\dag})^{*}=\mathcal{S}^{*}*_{N}(\mathcal{S}^{\dag})^{*}=\mathcal{S}^{\dag}*_{N}\mathcal{S},
\end{align}
we infer that
\begin{align}\label{29equ234}
\mathcal{Y}=&\mathcal{P}^{\dag}*_{N}\mathcal{E}*_{M}\mathcal{D}^{\dag}+
\mathcal{S}^{\dag}*_{N}\mathcal{S}*_{N}\mathcal{C}^{\dag}*_{N}\mathcal{E}*_{M}\mathcal{Q}^{\dag}
+(\mathcal{L_{P}})*_{N}(\mathcal{L_{S}})*_{N}\mathcal{Y}^{0}*_{M}\mathcal{Q}*_{M}\mathcal{Q}^{\dag}
\nonumber\\&+(\mathcal{L_{P}})*_{N}\mathcal{Y}^{0}*_{M}\mathcal{D}*_{M}\mathcal{D}^{\dag}*_{M}\mathcal{R_{Q}}+
\mathcal{Y}^{0}*_{M}\mathcal{R_{D}}\nonumber\\
=&\mathcal{P}^{\dag}*_{N}\mathcal{E}*_{M}\mathcal{D}^{\dag}+
\mathcal{S}^{\dag}*_{N}\mathcal{S}*_{N}\mathcal{C}^{\dag}*_{N}\mathcal{E}*_{M}\mathcal{Q}^{\dag}
+(\mathcal{L_{P}})*_{N}\mathcal{Y}^{0}*_{M}\mathcal{Q}*_{M}\mathcal{Q}^{\dag}
\nonumber\\
&-\mathcal{S}^{\dag}*_{N}\mathcal{S}*_{N}\mathcal{Y}^{0}*_{M}\mathcal{Q}*_{M}\mathcal{Q}^{\dag}
+(\mathcal{L_{P}})*_{N}\mathcal{Y}^{0}*_{M}\mathcal{D}*_{M}\mathcal{D}^{\dag}
\nonumber\\&-(\mathcal{L_{P}})*_{N}\mathcal{Y}^{0}*_{M}\mathcal{D}*_{M}\mathcal{D}^{\dag}*_{M}\mathcal{Q}*_{M}\mathcal{Q}^{\dag}+
\mathcal{Y}^{0}-\mathcal{Y}^{0}*_{M}\mathcal{D}*_{M}\mathcal{D}^{\dag}\nonumber\\
=&\mathcal{P}^{\dag}*_{N}\mathcal{E}*_{M}\mathcal{D}^{\dag}+
\mathcal{S}^{\dag}*_{N}\mathcal{S}*_{N}\mathcal{C}^{\dag}*_{N}\mathcal{E}*_{M}\mathcal{Q}^{\dag}
+(\mathcal{L_{P}})*_{N}\mathcal{Y}^{0}*_{M}\mathcal{Q}*_{M}\mathcal{Q}^{\dag}
\nonumber\\
&-\mathcal{S}^{\dag}*_{N}\mathcal{S}*_{N}\mathcal{Y}^{0}*_{M}\mathcal{Q}*_{M}\mathcal{Q}^{\dag}
+\mathcal{Y}^{0}*_{M}\mathcal{D}*_{M}\mathcal{D}^{\dag}-\mathcal{P}^{\dag}*_{N}\mathcal{P}*_{N}\mathcal{Y}^{0}*_{M}\mathcal{D}*_{M}\mathcal{D}^{\dag}
\nonumber\\&-(\mathcal{L_{P}})*_{N}\mathcal{Y}^{0}*_{M}\mathcal{D}*_{M}\mathcal{D}^{\dag}*_{M}\mathcal{Q}*_{M}\mathcal{Q}^{\dag}
+\mathcal{Y}^{0}-\mathcal{Y}^{0}*_{M}\mathcal{D}*_{M}\mathcal{D}^{\dag}\nonumber\\
=&\mathcal{P}^{\dag}*_{N}\mathcal{E}*_{M}\mathcal{D}^{\dag}+
\mathcal{S}^{\dag}*_{N}\mathcal{S}*_{N}\mathcal{C}^{\dag}*_{N}\mathcal{E}*_{M}\mathcal{Q}^{\dag}
-\mathcal{P}^{\dag}*_{N}\mathcal{P}*_{N}\mathcal{Y}^{0}*_{M}\mathcal{D}*_{M}\mathcal{D}^{\dag}
\nonumber\\
&-\mathcal{S}^{\dag}*_{N}\mathcal{S}*_{N}\mathcal{Y}^{0}*_{M}\mathcal{Q}*_{M}\mathcal{Q}^{\dag}+
\mathcal{Y}^{0}\nonumber\\
=&\mathcal{P}^{\dag}*_{N}\mathcal{E}*_{M}\mathcal{D}^{\dag}+
\mathcal{S}^{\dag}*_{N}\mathcal{S}*_{N}\mathcal{C}^{\dag}*_{N}\mathcal{E}*_{M}\mathcal{Q}^{\dag}
-\mathcal{P}^{\dag}*_{N}(\mathcal{R_{A}})*_{N}\mathcal{E}*_{M}\mathcal{D}^{\dag}
\nonumber\\
&-\mathcal{S}^{\dag}*_{N}\mathcal{S}*_{N}\mathcal{Y}^{0}*_{M}\mathcal{Q}*_{M}\mathcal{Q}^{\dag}+
\mathcal{Y}^{0}\nonumber\\
=&\mathcal{P}^{\dag}*_{N}\mathcal{E}*_{M}\mathcal{D}^{\dag}+
\mathcal{S}^{\dag}*_{N}\mathcal{S}*_{N}\mathcal{C}^{\dag}*_{N}\mathcal{E}*_{M}\mathcal{Q}^{\dag}
-\mathcal{P}^{\dag}*_{N}\mathcal{E}*_{M}\mathcal{D}^{\dag}
\nonumber\\
&-\mathcal{S}^{\dag}*_{N}\mathcal{S}*_{N}\mathcal{Y}^{0}*_{M}\mathcal{Q}*_{M}\mathcal{Q}^{\dag}+
\mathcal{Y}^{0}\nonumber\\
=&\mathcal{S}^{\dag}*_{N}\mathcal{S}*_{N}\mathcal{C}^{\dag}*_{N}\mathcal{E}*_{M}\mathcal{Q}^{\dag}
-\mathcal{S}^{\dag}*_{N}\mathcal{S}*_{N}\mathcal{Y}^{0}*_{M}\mathcal{Q}*_{M}\mathcal{Q}^{\dag}+
\mathcal{Y}^{0}.
\end{align}
We now want to prove that
\begin{align}
\mathcal{S}^{\dag}*_{N}\mathcal{S}*_{N}\mathcal{Y}^{0}*_{M}\mathcal{Q}*_{M}\mathcal{Q}^{\dag}
=\mathcal{S}^{\dag}*_{N}\mathcal{S}*_{N}\mathcal{C}^{\dag}*_{N}\mathcal{E}*_{M}\mathcal{Q}^{\dag}.
\end{align}
It follows from (\ref{29equ229}), (\ref{29equ230}), $(\mathcal{R}_{\mathcal{C}})*_{N}\mathcal{E}*_{M}\mathcal{L_{B}}=0$, $\mathcal{S}=\mathcal{C}*_{N}\mathcal{L_{P}}$ and
\begin{align}
\mathcal{B}*_{M}\mathcal{Q}^{\dag}=\mathcal{B}*_{M}\mathcal{B}^{\dag}*_{M}\mathcal{B}*_{M}\mathcal{Q}^{\dag}*_{M}\mathcal{Q}*_{M}\mathcal{Q}^{\dag}=
\mathcal{B}*_{M}(\mathcal{Q}^{\dag}*_{M}\mathcal{Q}*_{M}\mathcal{B}^{\dag}*_{M}\mathcal{B})^{*}*_{M}\mathcal{Q}^{\dag}=0,
\end{align}
that
\begin{align}\label{29equ237}
&\mathcal{S}^{\dag}*_{N}\mathcal{S}*_{N}\mathcal{Y}^{0}*_{M}\mathcal{Q}*_{M}\mathcal{Q}^{\dag}\nonumber\\
=&
\mathcal{S}^{\dag}*_{N}\mathcal{C}*_{N}(\mathcal{L_{P}})*_{N}\mathcal{Y}^{0}*_{M}\mathcal{Q}*_{M}\mathcal{Q}^{\dag}\nonumber\\
=&
\mathcal{S}^{\dag}*_{N}(\mathcal{C}*_{N}\mathcal{Y}^{0}*_{M}\mathcal{Q}*_{M}\mathcal{Q}^{\dag}-
\mathcal{C}*_{N}\mathcal{P}^{\dag}*_{N}\mathcal{P}*_{N}\mathcal{Y}^{0}*_{M}\mathcal{Q}*_{M}\mathcal{Q}^{\dag})\nonumber\\
=&
\mathcal{S}^{\dag}*_{N}[\mathcal{E}*_{M}\mathcal{(L_{B})}*_{M}\mathcal{Q}^{\dag}-
\mathcal{C}*_{N}\mathcal{P}^{\dag}*_{N}(\mathcal{R_{A}})*_{N}\mathcal{E}*_{M}(\mathcal{L_{B}})*_{M}\mathcal{Q}^{\dag}]\nonumber\\
=&\mathcal{S}^{\dag}*_{N}[\mathcal{E}*_{M}\mathcal{(L_{B})}-
\mathcal{C}*_{N}\mathcal{P}^{\dag}*_{N}(\mathcal{R_{A}})*_{N}\mathcal{E}*_{M}(\mathcal{L_{B}})]*_{M}\mathcal{Q}^{\dag}\nonumber\\
=&\mathcal{S}^{\dag}*_{N}[\mathcal{E}*_{M}\mathcal{(L_{B})}-
\mathcal{C}*_{N}\mathcal{P}^{\dag}*_{N}(\mathcal{R_{A}})*_{N}\mathcal{C}*_{N}\mathcal{C}^{\dag}*_{N}\mathcal{E}*_{M}(\mathcal{L_{B}})]*_{M}\mathcal{Q}^{\dag}\nonumber\\
=&\mathcal{S}^{\dag}*_{N}[\mathcal{E}*_{M}\mathcal{(L_{B})}-
\mathcal{C}*_{N}\mathcal{P}^{\dag}*_{N}\mathcal{P}*_{N}\mathcal{C}^{\dag}*_{N}\mathcal{E}*_{M}(\mathcal{L_{B}})]*_{M}\mathcal{Q}^{\dag}\nonumber\\
=&\mathcal{S}^{\dag}*_{N}[\mathcal{E}*_{M}\mathcal{(L_{B})}-
\mathcal{C}*_{N}(\mathcal{I}-\mathcal{L_{P}})*_{N}\mathcal{C}^{\dag}*_{N}\mathcal{E}*_{M}(\mathcal{L_{B}})]*_{M}\mathcal{Q}^{\dag}\nonumber\\
=&\mathcal{S}^{\dag}*_{N}[\mathcal{E}*_{M}\mathcal{(L_{B})}-
\mathcal{C}*_{N}\mathcal{C}^{\dag}*_{N}\mathcal{E}*_{M}(\mathcal{L_{B}})+
\mathcal{C}*_{N}(\mathcal{L_{P}})*_{N}\mathcal{C}^{\dag}*_{N}\mathcal{E}*_{M}(\mathcal{L_{B}})]*_{M}\mathcal{Q}^{\dag}\nonumber\\
=&\mathcal{S}^{\dag}*_{N}[\mathcal{E}*_{M}\mathcal{(L_{B})}-
\mathcal{E}*_{M}(\mathcal{L_{B}})+
\mathcal{C}*_{N}(\mathcal{L_{P}})*_{N}\mathcal{C}^{\dag}*_{N}\mathcal{E}*_{M}(\mathcal{L_{B}})]*_{M}\mathcal{Q}^{\dag}\nonumber\\
=&\mathcal{S}^{\dag}*_{N}\mathcal{S}*_{N}\mathcal{C}^{\dag}*_{N}\mathcal{E}*_{M}\mathcal{Q}^{\dag}.
\end{align}
Hence from (\ref{29equ234}) and (\ref{29equ237}), we see that
\begin{align}
\mathcal{Y}=\mathcal{Y}^{0}.
\end{align}

Now we want to show that $\mathcal{X}=\mathcal{X}^{0}$ where
\begin{align}
\mathcal{U}_{2}=\mathcal{Y}^{0}*_{M}\mathcal{D}*_{M}\mathcal{D}^{\dag},~
\mathcal{U}_{4}=\mathcal{X}^{0}*_{M}\mathcal{B}*_{M}\mathcal{B}^{\dag},~\mathcal{U}_{5}=\mathcal{X}^{0}.
\end{align}
First, we prove that
\begin{align}
\mathcal{X}=\mathcal{A}^{\dag}*_{N}(\mathcal{E}-\mathcal{C}*_{N}\mathcal{Y}*_{M}\mathcal{D})*_{M}\mathcal{B}^{\dag}+\mathcal{(L_{A})}*_{N}\mathcal{U}_{4}+
\mathcal{U}_{5*M}\mathcal{R_{B}},
\end{align}where $\mathcal{X}$ and $\mathcal{Y}$ are given in (\ref{25equ023}) and (\ref{25equ024}), respectively.
From (\ref{29equ217}), (\ref{29equ225}), $(\mathcal{R}_{\mathcal{A}})*_{N}\mathcal{E}*_{M}\mathcal{L_{D}}=0$ and $\mathcal{S}=\mathcal{C}*_{N}\mathcal{L_{P}}$, we infer that
\begin{align}
&\mathcal{A}^{\dag}*_{N}(\mathcal{E}-\mathcal{C}*_{N}\mathcal{Y}*_{M}\mathcal{D})*_{M}\mathcal{B}^{\dag}+\mathcal{(L_{A})}*_{N}\mathcal{U}_{4}+
\mathcal{U}_{5}*_{M}\mathcal{R_{B}}\nonumber\\
=&\mathcal{A}^{\dag}*_{N}\mathcal{E}*_{M}\mathcal{B}^{\dag}-\mathcal{A}^{\dag}*_{N}\mathcal{C}*_{N}\mathcal{Y}*_{M}\mathcal{D}*_{M}\mathcal{B}^{\dag}
+\mathcal{(L_{A})}*_{N}\mathcal{U}_{4}+
\mathcal{U}_{5}*_{M}\mathcal{R_{B}}\nonumber\\
=&\mathcal{A}^{\dag}*_{N}\mathcal{E}*_{M}\mathcal{B}^{\dag}-\mathcal{A}^{\dag}*_{N}\mathcal{C}*_{N}\mathcal{Y}*_{M}\mathcal{D}*_{M}\mathcal{B}^{\dag}
+\mathcal{(L_{A})}*_{N}\mathcal{U}_{4}+
\mathcal{U}_{5}*_{M}\mathcal{R_{B}}\nonumber\\
=&\mathcal{A}^{\dag}*_{N}\mathcal{E}*_{M}\mathcal{B}^{\dag}-
\mathcal{A}^{\dag}*_{N}\mathcal{C}*_{N}\mathcal{P}^{\dag}*_{N}\mathcal{E}*_{M}\mathcal{D}^{\dag}*_{M}\mathcal{D}*_{M}\mathcal{B}^{\dag}
\nonumber\\&-\mathcal{A}^{\dag}*_{N}\mathcal{C}*_{N}\mathcal{S}^{\dag}*_{N}\mathcal{S}*_{N}\mathcal{C}^{\dag}*_{N}\mathcal{E}*_{M}\mathcal{Q}^{\dag}*_{M}\mathcal{D}*_{M}\mathcal{B}^{\dag}
\nonumber\\
&-\mathcal{A}^{\dag}*_{N}\mathcal{S}*_{N}\mathcal{U}_{2}*_{M}(\mathcal{R_{Q}})*_{M}\mathcal{D}*_{M}\mathcal{B}^{\dag}
+(\mathcal{L_{A}})*_{N}\mathcal{U}_{4}+\mathcal{U}_{5}*_{M}\mathcal{R_{B}}\nonumber\\
=&\mathcal{A}^{\dag}*_{N}\mathcal{E}*_{M}\mathcal{B}^{\dag}-
\mathcal{A}^{\dag}*_{N}\mathcal{C}*_{N}\mathcal{P}^{\dag}*_{N}(\mathcal{R_{A}}+\mathcal{A}*_{N}\mathcal{A}^{\dag})*_{N}\mathcal{E}*_{M}\mathcal{D}^{\dag}*_{M}\mathcal{D}*_{M}\mathcal{B}^{\dag}
\nonumber\\
&-\mathcal{A}^{\dag}*_{N}\mathcal{C}*_{N}(\mathcal{L_{P}}+\mathcal{P}^{\dag}*_{N}\mathcal{P})*_{N}\mathcal{S}^{\dag}*_{N}\mathcal{S}*_{N}\mathcal{C}^{\dag}*_{N}\mathcal{E}*_{M}\mathcal{Q}^{\dag}*_{M}\mathcal{D}*_{M}\mathcal{B}^{\dag}
\nonumber\\
&-\mathcal{A}^{\dag}*_{N}\mathcal{S}*_{N}\mathcal{U}_{2}*_{M}(\mathcal{R_{Q}})*_{M}\mathcal{D}*_{M}\mathcal{B}^{\dag}
+(\mathcal{L_{A}})*_{N}\mathcal{U}_{4}+\mathcal{U}_{5}*_{M}\mathcal{R_{B}}\nonumber\\
=&\mathcal{A}^{\dag}*_{N}\mathcal{E}*_{M}\mathcal{B}^{\dag}-
\mathcal{A}^{\dag}*_{N}\mathcal{C}*_{N}\mathcal{P}^{\dag}*_{N}(\mathcal{R_{A}})*_{N}\mathcal{E}*_{M}\mathcal{D}^{\dag}*_{M}\mathcal{D}*_{M}\mathcal{B}^{\dag}
\nonumber\\
&-\mathcal{A}^{\dag}*_{N}\mathcal{C}*_{N}(\mathcal{L_{P}})*_{N}\mathcal{S}^{\dag}*_{N}\mathcal{S}*_{N}\mathcal{C}^{\dag}*_{N}\mathcal{E}*_{M}\mathcal{Q}^{\dag}*_{M}\mathcal{D}*_{M}\mathcal{B}^{\dag}
\nonumber\\
&-\mathcal{A}^{\dag}*_{N}\mathcal{S}*_{N}\mathcal{U}_{2}*_{M}(\mathcal{R_{Q}})*_{M}\mathcal{D}*_{M}\mathcal{B}^{\dag}
+(\mathcal{L_{A}})*_{N}\mathcal{U}_{4}+\mathcal{U}_{5}*_{M}\mathcal{R_{B}}\nonumber\\
=&\mathcal{A}^{\dag}*_{N}\mathcal{E}*_{M}\mathcal{B}^{\dag}-
\mathcal{A}^{\dag}*_{N}\mathcal{C}*_{N}\mathcal{P}^{\dag}*_{N}(\mathcal{R_{A}})*_{N}\mathcal{E}*_{M}\mathcal{B}^{\dag}
-\mathcal{A}^{\dag}*_{N}\mathcal{S}*_{N}\mathcal{C}^{\dag}*_{N}\mathcal{E}*_{M}\mathcal{Q}^{\dag}*_{M}\mathcal{D}*_{M}\mathcal{B}^{\dag}
\nonumber\\
&-\mathcal{A}^{\dag}*_{N}\mathcal{S}*_{N}\mathcal{U}_{2}*_{M}(\mathcal{R_{Q}})*_{M}\mathcal{D}*_{M}\mathcal{B}^{\dag}
+(\mathcal{L_{A}})*_{N}\mathcal{U}_{4}+\mathcal{U}_{5}*_{M}\mathcal{R_{B}}\nonumber\\
=&\mathcal{A}^{\dag}*_{N}\mathcal{E}*_{M}\mathcal{B}^{\dag}-
\mathcal{A}^{\dag}*_{N}\mathcal{C}*_{N}\mathcal{P}^{\dag}*_{N}\mathcal{E}*_{M}\mathcal{B}^{\dag}
-\mathcal{A}^{\dag}*_{N}\mathcal{S}*_{N}\mathcal{C}^{\dag}*_{N}\mathcal{E}*_{M}\mathcal{Q}^{\dag}*_{M}\mathcal{D}*_{M}\mathcal{B}^{\dag}
\nonumber\\
&-\mathcal{A}^{\dag}*_{N}\mathcal{S}*_{N}\mathcal{U}_{2}*_{M}(\mathcal{R_{Q}})*_{M}\mathcal{D}*_{M}\mathcal{B}^{\dag}
+(\mathcal{L_{A}})*_{N}\mathcal{U}_{4}+\mathcal{U}_{5}*_{M}\mathcal{R_{B}}\nonumber\\
=&\mathcal{X}.
\end{align}
Hence, we have
\begin{align}
\mathcal{X}=&\mathcal{A}^{\dag}*_{N}(\mathcal{E}-\mathcal{C}*_{N}\mathcal{Y}^{0}*_{M}\mathcal{D})*_{M}\mathcal{B}^{\dag}
+\mathcal{(L_{A})}*_{N}\mathcal{X}^{0}*_{M}\mathcal{B}*_{M}\mathcal{B}^{\dag}+
\mathcal{X}^{0}*_{M}\mathcal{R_{B}}\nonumber\\
=&\mathcal{A}^{\dag}*_{N}\mathcal{A}*_{N}\mathcal{X}^{0}*_{M}\mathcal{B}*_{M}\mathcal{B}^{\dag}
+\mathcal{X}^{0}*_{M}\mathcal{B}*_{M}\mathcal{B}^{\dag}-\mathcal{A}^{\dag}*_{N}\mathcal{A}*_{N}\mathcal{X}^{0}*_{M}\mathcal{B}*_{M}\mathcal{B}^{\dag}
\nonumber\\&+\mathcal{X}^{0}-\mathcal{X}^{0}*_{M}\mathcal{B}*_{M}\mathcal{B}^{\dag}\nonumber\\
=&\mathcal{X}^{0}.
\end{align}
\end{proof}

Now we consider some special cases of the quaternion tensor equation (\ref{tensorequ01}). In Theorem \ref{theorem01}, let $\mathcal{C}$ and $\mathcal{D}$ vanish. Then we can get the general solution to the following quaternion tensor equation
\begin{align}\label{specsys01}
\mathcal{A}*_{N}\mathcal{X}*_{M}\mathcal{B}=\mathcal{E}.
\end{align}

\begin{corollary}\label{coro22}
Let $\mathcal{A}\in\mathbb{H}^{I_{1}\times \cdots \times I_{N}\times J_{1}\times \cdots \times J_{N}},
\mathcal{B}\in\mathbb{H}^{K_{1}\times \cdots \times K_{M}\times L_{1}\times \cdots \times L_{M}}$ \\
and
$\mathcal{E}\in\mathbb{H}^{I_{1}\times \cdots \times I_{N}\times L_{1}\times \cdots \times L_{M}}$. Then the quaternion tensor equation (\ref{specsys01}) is consistent if and only if
\begin{align*}
(\mathcal{R}_{\mathcal{A}})*_{N}\mathcal{E}=0,\qquad
\mathcal{E}*_{M}(\mathcal{L}_{\mathcal{B}})=0.
\end{align*}
In this case, the general solution to (\ref{specsys01}) can be
expressed as
\begin{align*}
\mathcal{X}=&\mathcal{A}^{\dag}*_{N}\mathcal{E}*_{M}\mathcal{B}^{\dag}
+(\mathcal{L_{A}})*_{N}\mathcal{U}+\mathcal{V}*_{M}\mathcal{R_{B}},
\end{align*}
where $\mathcal{U}$ and $\mathcal{V}$ are arbitrary quaternion tensors with suitable order.
\end{corollary}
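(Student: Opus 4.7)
The plan is to derive Corollary \ref{coro22} as a direct specialization of Theorem \ref{theorem01} by setting $\mathcal{C}=0$ and $\mathcal{D}=0$, so that the second Einstein-product term $\mathcal{C}*_{N}\mathcal{Y}*_{M}\mathcal{D}$ in (\ref{tensorequ01}) disappears and the equation reduces to (\ref{specsys01}). With these choices, each of the auxiliary tensors $\mathcal{P}=(\mathcal{R}_{\mathcal{A}})*_{N}\mathcal{C}$, $\mathcal{Q}=\mathcal{D}*_{M}\mathcal{L}_{\mathcal{B}}$, and $\mathcal{S}=\mathcal{C}*_{N}\mathcal{L}_{\mathcal{P}}$ is the zero tensor. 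Since the Moore--Penrose inverse of the zero tensor is zero, it follows that $\mathcal{P}^{\dag}=\mathcal{Q}^{\dag}=\mathcal{S}^{\dag}=0$, and consequently $\mathcal{R}_{\mathcal{P}}=\mathcal{L}_{\mathcal{P}}=\mathcal{R}_{\mathcal{Q}}=\mathcal{L}_{\mathcal{Q}}=\mathcal{I}$, while also $\mathcal{R}_{\mathcal{C}}=\mathcal{I}$ and $\mathcal{L}_{\mathcal{D}}=\mathcal{I}$.

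Next, I would substitute these reductions into the four solvability conditions (\ref{25equ022}). The first condition collapses to $(\mathcal{R}_{\mathcal{A}})*_{N}\mathcal{E}=0$ and the third collapses to the same statement; the second and fourth conditions both collapse to $\mathcal{E}*_{M}(\mathcal{L}_{\mathcal{B}})=0$. Hence the four conditions of Theorem \ref{theorem01} reduce precisely to the two conditions asserted in Corollary \ref{coro22}.

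Finally, I would substitute the same vanishing values into the expression (\ref{25equ023}) for the general $\mathcal{X}$. Every summand that contains a factor of $\mathcal{C}$, $\mathcal{S}$, $\mathcal{P}^{\dag}$, $\mathcal{Q}^{\dag}$, or $\mathcal{D}$ drops out, leaving only $\mathcal{A}^{\dag}*_{N}\mathcal{E}*_{M}\mathcal{B}^{\dag}+(\mathcal{L}_{\mathcal{A}})*_{N}\mathcal{U}_{4}+\mathcal{U}_{5}*_{M}\mathcal{R}_{\mathcal{B}}$; relabeling $\mathcal{U}:=\mathcal{U}_{4}$ and $\mathcal{V}:=\mathcal{U}_{5}$ yields the stated parametrization. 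The companion formula (\ref{25equ024}) for $\mathcal{Y}$ is simply discarded because $\mathcal{Y}$ no longer appears in the reduced equation. The only point of care is to observe that the generality of the parametrization is inherited from Theorem \ref{theorem01}: since that theorem produces \emph{every} solution $(\mathcal{X}^{0},\mathcal{Y}^{0})$ for some choice of $\mathcal{U}_{1},\ldots,\mathcal{U}_{5}$, in particular every solution of (\ref{specsys01}) arises for some choice of the surviving parameters $\mathcal{U}_{4},\mathcal{U}_{5}$. There is no substantive obstacle in this proof; it is purely a matter of bookkeeping the vanishings through the formulas of Theorem \ref{theorem01}.
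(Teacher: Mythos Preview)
Your proposal is correct and follows exactly the route indicated in the paper: immediately before Corollary~\ref{coro22} the authors remark that one obtains it from Theorem~\ref{theorem01} by letting $\mathcal{C}$ and $\mathcal{D}$ vanish, and your bookkeeping of the resulting degenerations $\mathcal{P}=\mathcal{Q}=\mathcal{S}=0$, $\mathcal{R}_{\mathcal{P}}=\mathcal{L}_{\mathcal{Q}}=\mathcal{R}_{\mathcal{C}}=\mathcal{L}_{\mathcal{D}}=\mathcal{I}$ is precisely what is needed to collapse (\ref{25equ022}) to the two stated conditions and (\ref{25equ023}) to the stated parametrization.
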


Let $\mathcal{B}=\mathcal{I}$ and $\mathcal{C}=\mathcal{I}$ in Theorem \ref{theorem01}. Then we can solve the following generalized Sylvester quaternion tensor equation
\begin{align}\label{specsys02}
\mathcal{A}*_{N}\mathcal{X}+\mathcal{Y}*_{M}\mathcal{D}=\mathcal{E}.
\end{align}

\begin{corollary}\label{coro23}
Let $\mathcal{A}\in\mathbb{H}^{I_{1}\times \cdots \times I_{N}\times J_{1}\times \cdots \times J_{N}},
\mathcal{D}\in\mathbb{H}^{H_{1}\times \cdots \times H_{M}\times L_{1}\times \cdots \times L_{M}}$ and\\
$\mathcal{E}\in\mathbb{H}^{I_{1}\times \cdots \times I_{N}\times L_{1}\times \cdots \times L_{M}}$.
Then the generalized Sylvester quaternion tensor equation (\ref{specsys02}) is consistent if and only if
\begin{align*}
(\mathcal{R}_{\mathcal{A}})*_{N}\mathcal{E}*_{M}\mathcal{L_{D}}=0.
\end{align*}
In this case, the general solution to (\ref{specsys02}) can be
expressed as
\begin{align*}
\mathcal{X}=\mathcal{A}^{\dag}*_{N}\mathcal{E}-\mathcal{U}_{1}*_{M}\mathcal{D}
+(\mathcal{L_{A}})*_{N}\mathcal{U}_{2},
\end{align*}
\begin{align*}
\mathcal{Y}=&(\mathcal{R_{A}})*_{N}\mathcal{E}*_{M}\mathcal{D}^{\dag}+\mathcal{A}*_{N}\mathcal{U}_{1}+
\mathcal{U}_{3}*_{M}\mathcal{R_{D}},
\end{align*}
where $\mathcal{U}_{1},\mathcal{U}_{2},\mathcal{U}_{3}$ are arbitrary quaternion tensors with suitable order.
\end{corollary}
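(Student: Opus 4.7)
The plan is to derive Corollary \ref{coro23} as a direct specialization of Theorem \ref{theorem01}, taking $\mathcal{B}$ and $\mathcal{C}$ to be unit tensors of the appropriate orders. First I would evaluate the auxiliary quantities introduced in Theorem \ref{theorem01} under these choices. Since $\mathcal{B}=\mathcal{I}$ and $\mathcal{C}=\mathcal{I}$ are self-adjoint and equal to their own Moore--Penrose inverses, one obtains $\mathcal{L}_{\mathcal{B}}=\mathcal{R}_{\mathcal{B}}=0$ and $\mathcal{L}_{\mathcal{C}}=\mathcal{R}_{\mathcal{C}}=0$. Consequently $\mathcal{P}=\mathcal{R}_{\mathcal{A}}*_N\mathcal{C}=\mathcal{R}_{\mathcal{A}}$, $\mathcal{Q}=\mathcal{D}*_M\mathcal{L}_{\mathcal{B}}=0$, and $\mathcal{S}=\mathcal{C}*_N\mathcal{L}_{\mathcal{P}}=\mathcal{L}_{\mathcal{R}_{\mathcal{A}}}$.

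Next I would reduce the four solvability conditions of (\ref{25equ022}) to the single condition stated in the corollary. Because $\mathcal{R}_{\mathcal{A}}$ is Hermitian and idempotent, $\mathcal{R}_{\mathcal{A}}^{\dag}=\mathcal{R}_{\mathcal{A}}$, whence $\mathcal{R}_{\mathcal{P}}=\mathcal{I}-\mathcal{R}_{\mathcal{A}}*_N\mathcal{R}_{\mathcal{A}}=\mathcal{I}-\mathcal{R}_{\mathcal{A}}$; thus $\mathcal{R}_{\mathcal{P}}*_N\mathcal{R}_{\mathcal{A}}*_N\mathcal{E}=0$ holds automatically. The second and fourth conditions collapse trivially because $\mathcal{L}_{\mathcal{B}}=0$ and $\mathcal{R}_{\mathcal{C}}=0$. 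Only the third condition, $\mathcal{R}_{\mathcal{A}}*_N\mathcal{E}*_M\mathcal{L}_{\mathcal{D}}=0$, genuinely survives.

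Then I would substitute $\mathcal{B}=\mathcal{I}$ and $\mathcal{C}=\mathcal{I}$ into the formulas (\ref{25equ023}) and (\ref{25equ024}) for the general solution. With $\mathcal{B}^{\dag}=\mathcal{I}$ and $\mathcal{Q}=0$ (using the conventions $0^{\dag}=0$ and $\mathcal{R}_{0}=\mathcal{I}$), most of the composite terms collapse: the pieces containing $\mathcal{Q}^{\dag}$ vanish, the $\mathcal{R}_{\mathcal{B}}$ term in the expression for $\mathcal{X}$ is zero, and the surviving contributions can be regrouped and relabelled. After renaming the free parameters, (\ref{25equ023}) reduces to $\mathcal{X}=\mathcal{A}^{\dag}*_N\mathcal{E}-\mathcal{U}_1*_M\mathcal{D}+\mathcal{L}_{\mathcal{A}}*_N\mathcal{U}_2$, and (\ref{25equ024}) reduces to $\mathcal{Y}=\mathcal{R}_{\mathcal{A}}*_N\mathcal{E}*_M\mathcal{D}^{\dag}+\mathcal{A}*_N\mathcal{U}_1+\mathcal{U}_3*_M\mathcal{R}_{\mathcal{D}}$.

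The main obstacle is purely bookkeeping: tracking which of the five arbitrary parameters $\mathcal{U}_1,\ldots,\mathcal{U}_5$ of the master formula become redundant once $\mathcal{Q}=0$, and verifying that the remaining three give exactly the parametrization claimed. As a sanity check I would substitute the reduced $\mathcal{X}$ and $\mathcal{Y}$ directly into $\mathcal{A}*_N\mathcal{X}+\mathcal{Y}*_M\mathcal{D}$: the $\mathcal{A}*_N\mathcal{U}_1*_M\mathcal{D}$ terms cancel, the $\mathcal{L}_{\mathcal{A}}$ and $\mathcal{R}_{\mathcal{D}}$ terms annihilate on the appropriate sides, and the surviving $\mathcal{A}*_N\mathcal{A}^{\dag}*_N\mathcal{E}+\mathcal{R}_{\mathcal{A}}*_N\mathcal{E}*_M\mathcal{D}^{\dag}*_M\mathcal{D}$ equals $\mathcal{E}$ by virtue of the surviving solvability condition. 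This closes the proof.
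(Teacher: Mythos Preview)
Your approach is exactly what the paper does: the corollary is stated immediately after the sentence ``Let $\mathcal{B}=\mathcal{I}$ and $\mathcal{C}=\mathcal{I}$ in Theorem \ref{theorem01},'' with no further proof given. You supply considerably more detail than the paper, correctly computing $\mathcal{P}=\mathcal{R}_{\mathcal{A}}$, $\mathcal{Q}=0$, $\mathcal{S}=\mathcal{A}*_N\mathcal{A}^{\dag}$ and reducing the four conditions of (\ref{25equ022}) to the single surviving one; your final direct verification of the reduced formulas is a sound way to close the argument.
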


Now we consider the quaternion tensor equation (\ref{htensorequ01}).
Observe that the quaternion tensor equation (\ref{tensorequ01}) plays an important role in investigating the solution to (\ref{htensorequ01}).

\begin{theorem}\label{04theorem54}
Let $\mathcal{A}\in\mathbb{H}^{I_{1}\times \cdots \times I_{N}\times J_{1}\times \cdots \times J_{N}},
\mathcal{C}\in\mathbb{H}^{I_{1}\times \cdots \times I_{N}\times G_{1}\times \cdots \times G_{N}}$  and\\
$\mathcal{E}=\mathcal{E}^{\eta*}\in\mathbb{H}^{I_{1}\times \cdots \times I_{N}\times I_{1}\times \cdots \times I_{N}}$. Set
\begin{align*}
\mathcal{P}=(\mathcal{R_{A}})*_{N}\mathcal{C},\quad  \mathcal{S}=\mathcal{C}*_{N}\mathcal{L_{P}}.
\end{align*}
Then the quaternion tensor equation (\ref{htensorequ01}) has an $\eta$-Hermitian solution if and only if
\begin{align*}
(\mathcal{R}_{\mathcal{P}})*_{N}(\mathcal{R}_{\mathcal{A}})*_{N}\mathcal{E}=0,\qquad
(\mathcal{R}_{\mathcal{A}})*_{N}\mathcal{E}*_{N}\mathcal{R_{C}}=0.
\end{align*}
In this case, the general $\eta$-Hermitian solution to (\ref{htensorequ01}) can be
expressed as
\begin{align*}
\mathcal{X}=\frac{\widehat{\mathcal{X}}+\widehat{\mathcal{X}}^{\eta*}}{2},\qquad
\mathcal{Y}=\frac{\widehat{\mathcal{Y}}+\widehat{\mathcal{Y}}^{\eta*}}{2},
\end{align*}
where
\begin{align*}
\widehat{\mathcal{X}}=&\mathcal{A}^{\dag}*_{N}\mathcal{E}*_{N}(\mathcal{A}^{\dag})^{\eta*}-
\mathcal{A}^{\dag}*_{N}\mathcal{C}*_{N}\mathcal{P}^{\dag}*_{N}\mathcal{E}*_{N}(\mathcal{A}^{\dag})^{\eta*}
\\&-\mathcal{A}^{\dag}*_{N}\mathcal{S}*_{N}\mathcal{C}^{\dag}*_{N}\mathcal{E}*_{N}(\mathcal{P}^{\dag})^{\eta*}*_{N}\mathcal{C}^{*}*_{N}(\mathcal{A}^{\dag})^{\eta*}
\nonumber\\
&-\mathcal{A}^{\dag}*_{N}\mathcal{S}*_{N}\mathcal{U}_{2}*_{M}(\mathcal{L_{P}})*_{N}(\mathcal{C})^{\eta*}*_{N}(\mathcal{A}^{\dag})^{\eta*}
+(\mathcal{L_{A}})*_{N}\mathcal{U}_{4}+\mathcal{U}_{5}*_{N}\mathcal{L_{A}},
\end{align*}
\begin{align*}
\widehat{\mathcal{Y}}=&\mathcal{P}^{\dag}*_{N}\mathcal{E}*_{N}(\mathcal{C}^{\dag})^{\eta*}+
\mathcal{S}^{\dag}*_{N}\mathcal{S}*_{N}\mathcal{C}^{\dag}*_{N}\mathcal{E}*_{N}(\mathcal{P}^{\dag})^{\eta*}
\\&+(\mathcal{L_{P}})*_{N}(\mathcal{L_{S}})*_{N}\mathcal{U}_{1}+(\mathcal{L_{P}})*_{N}\mathcal{U}_{2}*_{N}\mathcal{L_{P}}+
\mathcal{U}_{3}*_{N}\mathcal{L_{C}},
\end{align*}
where $\mathcal{U}_{1},\mathcal{U}_{2},\mathcal{U}_{3},\mathcal{U}_{4},\mathcal{U}_{5}$ are arbitrary quaternion tensors with suitable order.
\end{theorem}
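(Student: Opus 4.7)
The plan is to reduce Theorem \ref{04theorem54} to Theorem \ref{theorem01} by viewing equation (\ref{htensorequ01}) as the generalized Sylvester equation (\ref{tensorequ01}) with the choices $\mathcal{B} = \mathcal{A}^{\eta*}$ and $\mathcal{D} = \mathcal{C}^{\eta*}$, and then to enforce $\eta$-Hermicity by symmetrization. First I would verify that under these substitutions the auxiliary tensors coincide with the ones declared here. The tensor $\mathcal{P} = \mathcal{R}_{\mathcal{A}} *_{N} \mathcal{C}$ is unchanged, and using the identities $(\mathcal{A}^{\eta*})^{\dag} = (\mathcal{A}^{\dag})^{\eta*}$, $\mathcal{R}^{\eta*}_{\mathcal{A}} = \mathcal{L}_{\mathcal{A}^{\eta*}}$, and $\mathcal{I}^{\eta*} = \mathcal{I}$ recorded in the preceding proposition, one finds $\mathcal{L}_{\mathcal{B}} = \mathcal{R}_{\mathcal{A}}^{\eta*}$, $\mathcal{L}_{\mathcal{D}} = \mathcal{R}_{\mathcal{C}}^{\eta*}$, $\mathcal{Q} = \mathcal{P}^{\eta*}$, and $\mathcal{L}_{\mathcal{Q}} = \mathcal{R}_{\mathcal{P}}^{\eta*}$. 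With these identifications in hand, and using $\mathcal{E} = \mathcal{E}^{\eta*}$, the four solvability conditions of Theorem \ref{theorem01} collapse in pairs (each pair related by applying $\eta*$ and invoking $\mathcal{E}^{\eta*} = \mathcal{E}$), leaving exactly the two conditions displayed in Theorem \ref{04theorem54}.

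Next, Theorem \ref{theorem01} furnishes a general (not necessarily $\eta$-Hermitian) solution $(\widehat{\mathcal{X}}, \widehat{\mathcal{Y}})$ parametrized by free tensors $\mathcal{U}_{1}, \ldots, \mathcal{U}_{5}$. Substituting $\mathcal{B}^{\dag} = (\mathcal{A}^{\dag})^{\eta*}$, $\mathcal{D}^{\dag} = (\mathcal{C}^{\dag})^{\eta*}$, $\mathcal{Q}^{\dag} = (\mathcal{P}^{\dag})^{\eta*}$, and the tensor projections above into (\ref{25equ023}) and (\ref{25equ024}) should reproduce the formulas for $\widehat{\mathcal{X}}$ and $\widehat{\mathcal{Y}}$ stated in Theorem \ref{04theorem54}. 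To obtain an $\eta$-Hermitian solution I would then symmetrize: since $(\mathcal{A} *_{N} \mathcal{X} *_{N} \mathcal{A}^{\eta*})^{\eta*} = \mathcal{A} *_{N} \mathcal{X}^{\eta*} *_{N} \mathcal{A}^{\eta*}$ and $\mathcal{E}^{\eta*} = \mathcal{E}$, taking the $\eta$-conjugate transpose of (\ref{htensorequ01}) shows that $(\widehat{\mathcal{X}}^{\eta*}, \widehat{\mathcal{Y}}^{\eta*})$ is again a solution, so by linearity the averages $\mathcal{X} = (\widehat{\mathcal{X}} + \widehat{\mathcal{X}}^{\eta*})/2$ and $\mathcal{Y} = (\widehat{\mathcal{Y}} + \widehat{\mathcal{Y}}^{\eta*})/2$ are $\eta$-Hermitian and solve (\ref{htensorequ01}).

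For the converse direction, given any $\eta$-Hermitian solution $(\mathcal{X}_{0}, \mathcal{Y}_{0})$ of (\ref{htensorequ01}), Theorem \ref{theorem01} provides a specialization of the parameters $\mathcal{U}_{i}$ for which $(\widehat{\mathcal{X}}, \widehat{\mathcal{Y}}) = (\mathcal{X}_{0}, \mathcal{Y}_{0})$; averaging with the $\eta$-conjugate then preserves the pair and exhibits $(\mathcal{X}_{0}, \mathcal{Y}_{0})$ in the stated form. The main obstacle will be the careful bookkeeping that the collapse of the four consistency conditions really yields the two stated ones, and that after symmetrization the free parameters $\mathcal{U}_{1}, \ldots, \mathcal{U}_{5}$ still sweep out the entire $\eta$-Hermitian solution set: a priori only half the degrees of freedom survive after averaging, but the $\eta*$-symmetric structure of the projections $\mathcal{L}_{\mathcal{A}}$, $\mathcal{L}_{\mathcal{P}}$, $\mathcal{L}_{\mathcal{S}}$ and the orthogonality relations used in the proof of Theorem \ref{theorem01} should absorb the redundancy, so this identification step is the one requiring the most care.
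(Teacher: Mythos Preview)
Your proposal is correct and follows essentially the same route as the paper: reduce to Theorem~\ref{theorem01} with $\mathcal{B}=\mathcal{A}^{\eta*}$, $\mathcal{D}=\mathcal{C}^{\eta*}$, observe that the $\eta$-Hermitian equation has an $\eta$-Hermitian solution iff it has any solution (via the symmetrization $(\widehat{\mathcal{X}}+\widehat{\mathcal{X}}^{\eta*})/2$), and then read off the conditions and the parametrized solution. The paper's proof is in fact terser than yours---it simply records the equivalence and then says ``Applying Theorem~\ref{theorem01}\ldots''---so your explicit verification that $\mathcal{Q}=\mathcal{P}^{\eta*}$, $\mathcal{L}_{\mathcal{B}}=\mathcal{R}_{\mathcal{A}}^{\eta*}$, etc., and that the four conditions of (\ref{25equ022}) collapse in $\eta*$-conjugate pairs, fills in details the paper leaves implicit; your worry about whether the symmetrized parameters sweep out all $\eta$-Hermitian solutions is resolved exactly as you outline in your converse paragraph, since any $\eta$-Hermitian $(\mathcal{X}_0,\mathcal{Y}_0)$ already satisfies $\mathcal{X}_0=(\mathcal{X}_0+\mathcal{X}_0^{\eta*})/2$.
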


\begin{proof}
We first prove that the quaternion tensor equation (\ref{htensorequ01}) has an $\eta$-Hermitian solution
if and only if the following quaternion tensor equation
\begin{align}\label{30equ041}
\mathcal{A}*_{N}\widehat{\mathcal{X}}*_{N}\mathcal{A}^{\eta*}+\mathcal{C}*_{N}\widehat{\mathcal{Y}}*_{N}\mathcal{C}^{\eta*}=\mathcal{E}
\end{align}
has a solution. If the quaternion tensor equation (\ref{htensorequ01}) has an $\eta$-Hermitian solution,
say, $(\mathcal{X}^{0},\mathcal{Y}^{0})$, then the quaternion tensor equation (\ref{30equ041}) clearly
has a solution $(\widehat{\mathcal{X}},\widehat{\mathcal{Y}})=(\mathcal{X}^{0},\mathcal{Y}^{0})$. Conversely, if the quaternion tensor equation (\ref{30equ041}) has a solution $(\widehat{\mathcal{X}},\widehat{\mathcal{Y}})$, then
\begin{align*}
(\mathcal{X},\mathcal{Y})=\Big(\frac{\widehat{\mathcal{X}}+\widehat{\mathcal{X}}^{\eta*}}{2},\frac{\widehat{\mathcal{Y}}+\widehat{\mathcal{Y}}^{\eta*}}{2}\Big)
\end{align*}
is an $\eta$-Hermitian solution of (\ref{htensorequ01}). Applying Theorem \ref{theorem01}  we can give some solvability conditions and general $\eta$-Hermitian solution to the quaternion tensor equation (\ref{htensorequ01}).
\end{proof}

In Theorem \ref{04theorem54}, let $\mathcal{C}=\mathcal{I}$. Then we can solve the following quaternion tensor equation
\begin{align}\label{htensorequ02}
\mathcal{A}*_{N}\mathcal{X}*_{N}\mathcal{A}^{\eta*}=\mathcal{E},
\quad \mathcal{X}=\mathcal{X}^{\eta*}.
\end{align}

\begin{corollary}
Let $\mathcal{A}\in\mathbb{H}^{I_{1}\times \cdots \times I_{N}\times J_{1}\times \cdots \times J_{N}}$  and
$\mathcal{E}=\mathcal{E}^{\eta*}\in\mathbb{H}^{I_{1}\times \cdots \times I_{N}\times I_{1}\times \cdots \times I_{N}}$.
Then the quaternion tensor equation (\ref{htensorequ02}) has an $\eta$-Hermitian solution if and only if
\begin{align*}
(\mathcal{R}_{\mathcal{A}})*_{N}\mathcal{E}=0.
\end{align*}
In this case, the general $\eta$-Hermitian solution to (\ref{htensorequ02}) can be
expressed as
\begin{align*}
\mathcal{X}=&\mathcal{A}^{\dag}*_{N}\mathcal{E}*_{N}(\mathcal{A}^{\dag})^{\eta*}
+(\mathcal{L_{A}})*_{N}\mathcal{U}+\mathcal{U}^{\eta*}*_{N}(\mathcal{L_{A}})^{\eta*},
\end{align*}
where $\mathcal{U}$ is an arbitrary quaternion tensor with suitable order.
\end{corollary}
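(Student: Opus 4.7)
The plan is to specialize to $\mathcal{C}=\mathcal{I}$, but rather than substituting directly into Theorem \ref{04theorem54} (whose conditions would collapse to trivialities in that case), I would mimic the symmetrization argument from the proof of Theorem \ref{04theorem54}. Specifically, observe that (\ref{htensorequ02}) admits an $\eta$-Hermitian solution if and only if the unconstrained auxiliary equation $\mathcal{A}*_{N}\widehat{\mathcal{X}}*_{N}\mathcal{A}^{\eta*}=\mathcal{E}$ admits some (arbitrary) solution $\widehat{\mathcal{X}}$: the forward direction is immediate, and for the converse the tensor $\tfrac{1}{2}(\widehat{\mathcal{X}}+\widehat{\mathcal{X}}^{\eta*})$ is automatically $\eta$-Hermitian and solves (\ref{htensorequ02}) since $\mathcal{E}^{\eta*}=\mathcal{E}$ and $(\mathcal{A}*_{N}\widehat{\mathcal{X}}*_{N}\mathcal{A}^{\eta*})^{\eta*}=\mathcal{A}*_{N}\widehat{\mathcal{X}}^{\eta*}*_{N}\mathcal{A}^{\eta*}$.

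Having reduced to the unconstrained problem, I would apply Corollary \ref{coro22} with $\mathcal{B}:=\mathcal{A}^{\eta*}$. This yields two consistency conditions, $\mathcal{R}_{\mathcal{A}}*_{N}\mathcal{E}=0$ and $\mathcal{E}*_{N}\mathcal{L}_{\mathcal{A}^{\eta*}}=0$. Using property $(4)$ of the preceding proposition, $\mathcal{L}_{\mathcal{A}^{\eta*}}=\mathcal{R}_{\mathcal{A}}^{\eta*}$, and $\eta$-conjugate-transposing gives
\begin{equation*}
(\mathcal{R}_{\mathcal{A}}*_{N}\mathcal{E})^{\eta*}=\mathcal{E}^{\eta*}*_{N}\mathcal{R}_{\mathcal{A}}^{\eta*}=\mathcal{E}*_{N}\mathcal{L}_{\mathcal{A}^{\eta*}},
\end{equation*}
so under $\mathcal{E}=\mathcal{E}^{\eta*}$ the two conditions collapse to the single stated condition $\mathcal{R}_{\mathcal{A}}*_{N}\mathcal{E}=0$.

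For the solution formula, Corollary \ref{coro22} produces the general solution $\widehat{\mathcal{X}}=\mathcal{A}^{\dag}*_{N}\mathcal{E}*_{N}(\mathcal{A}^{\eta*})^{\dag}+\mathcal{L}_{\mathcal{A}}*_{N}\mathcal{U}+\mathcal{V}*_{N}\mathcal{R}_{\mathcal{A}^{\eta*}}$. Invoking $(\mathcal{A}^{\eta*})^{\dag}=(\mathcal{A}^{\dag})^{\eta*}$ and $\mathcal{L}_{\mathcal{A}}^{\eta*}=\mathcal{R}_{\mathcal{A}^{\eta*}}$ from the preceding proposition, together with $\mathcal{E}^{\eta*}=\mathcal{E}$, the particular part is already $\eta$-Hermitian. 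Symmetrizing, the homogeneous portion becomes $\mathcal{L}_{\mathcal{A}}*_{N}\mathcal{W}+\mathcal{W}^{\eta*}*_{N}\mathcal{L}_{\mathcal{A}}^{\eta*}$ after setting $\mathcal{W}=\tfrac{1}{2}(\mathcal{U}+\mathcal{V}^{\eta*})$, matching the formula claimed in the corollary.

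The main technical point to verify is that this reparametrization captures \emph{every} $\eta$-Hermitian solution, i.e.\ that $\mathcal{W}$ may be treated as a free parameter. This is transparent, however: any $\eta$-Hermitian solution $\mathcal{X}$ is a fortiori a solution of the unconstrained equation, so by Corollary \ref{coro22} it has the form $\mathcal{X}=\mathcal{A}^{\dag}*_{N}\mathcal{E}*_{N}(\mathcal{A}^{\eta*})^{\dag}+\mathcal{L}_{\mathcal{A}}*_{N}\mathcal{U}_{0}+\mathcal{V}_{0}*_{N}\mathcal{R}_{\mathcal{A}^{\eta*}}$ for some $\mathcal{U}_{0},\mathcal{V}_{0}$, and applying $\mathcal{X}=\tfrac{1}{2}(\mathcal{X}+\mathcal{X}^{\eta*})$ recovers the stated expression with $\mathcal{W}=\tfrac{1}{2}(\mathcal{U}_{0}+\mathcal{V}_{0}^{\eta*})$. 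Conversely, for arbitrary $\mathcal{W}$ one may take $\mathcal{U}=2\mathcal{W}$, $\mathcal{V}=0$, showing the parametrization is onto.
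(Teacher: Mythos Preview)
Your proposal is correct and follows essentially the same strategy the paper uses: reduce the $\eta$-Hermitian problem to the unconstrained equation via the symmetrization $\tfrac{1}{2}(\widehat{\mathcal{X}}+\widehat{\mathcal{X}}^{\eta*})$, then invoke the general solvability theory. The paper derives the corollary by specializing Theorem~\ref{04theorem54}, whereas you bypass that theorem and appeal directly to Corollary~\ref{coro22} with $\mathcal{B}=\mathcal{A}^{\eta*}$; this is slightly cleaner here and, as you observed, avoids the degeneracy that occurs when one literally sets $\mathcal{C}=\mathcal{I}$ in Theorem~\ref{04theorem54}.
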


\section{\textbf{Conclusion}}

We have derived the quaternion tensor SVD, quaternion tensor rank decomposition, and $\eta$-Hermitian quaternion tensor decomposition with the isomorphic group structures and Einstein product. Using the quaternion tensor SVD, we have given the expression of the Moore-Penrose inverse of a quaternion tensor. We have also established some necessary and sufficient conditions for the existence of the general solution to the generalized Sylvester quaternion tensor equation (\ref{tensorequ01}) in terms of the Moore-Penrose inverses of the quaternion tensors $\mathcal{A},\mathcal{B},\mathcal{C},\mathcal{D},\mathcal{E}$. The expression of the general solution to this tensor equation (\ref{tensorequ01}) has also been given when it is solvable. Finally, we have presented necessary and sufficient conditions for the quaternion tensor equation (\ref{htensorequ01}) to have an $\eta$-Hermitian solution. The expression of the general $\eta$-Hermitian to (\ref{htensorequ01}) has also been provided when its solvability conditions are satisfied.


\begin{thebibliography}{99}

\bibitem {orly04}O. Alter, G.H. Golub, Reconstructing the pathways of a cellular system from genome-scale signals by using matrix and tensor computations, \textit{Proc. Natl. Acad. Sci. USA}. 102 (49) (2005)\textbf{ } 17559--17564.

    \bibitem {N. LE Bihan}N.L. Bihan, J. Mars, Singular value decomposition of
quaternion matrices: A new tool for vector-sensor signal processing,
\textit{Signal Processing} 84 (7) (2004)\textbf{ } 1177--1199.

\bibitem {navasca}M. Brazell, N. Li, C. Navasca, C. Tamon, Solving multilinear systems via tensor inversion,
\textit{SIAM J. Matrix Anal. Appl.} 34 (2013)\textbf{ } 542--570.


\bibitem {quaternionapp01}J.H. Chang, J.J. Ding, Quaternion matrix singular value decomposition and its applications for color image processing, in ICIP 2003: Proceeding of the 2003 IEEE International Conference on Image Processing, 2003.

\bibitem {changkungching01}K.C. Chang, K. Pearson, T. Zhang, Primitivity, the convergence of the NQZ method, and the largest eigenvalue for nonnegative tensors, \textit{SIAM J. Matrix Anal. Appl.} 32 (3) (2011)\textbf{ } 806--819.

\bibitem {changkungching04}K.C. Chang, L.Q. Qi, T. Zhang, A survey on the spectral theory of nonnegative tensors,
 \textit{ Numer. Linear Algebra Appl}. 20 (6) (2013)\textbf{ } 891--912.

\bibitem {changkungching02}K.C. Chang, K. Pearson, T. Zhang, Perron-Frobenius theorem for nonnegative tensors,
 \textit{Commun. Math. Sci}. 6 (2) (2008)\textbf{ }  507--520.

\bibitem {changkungching03}K.C. Chang, K. Pearson, T. Zhang, On eigenvalue problems of real symmetric tensors,
  \textit{J. Math. Anal. Appl}. 350 (1) (2009)\textbf{ }  416--422.

\bibitem {chenzhen01}Z. Chen, L.Z. Lu, A projection method and Kronecker product preconditioner for solving Sylvester tensor equations,
\textit{Sci. China Math.}  55 (6) (2012)\textbf{ } 1281--1292.

\bibitem {comon01}P. Comon, Tensor decompositions: State of the art and applications, in Mathematics in Signal Processing V, J. G. McWhirter and
I. K. Proudler, eds., Oxford University Press, 2001, 1--24.

\bibitem {comon02}P. Comon, X. Luciani, A.L.F. de Almeida, Tensor decompositions, alternating least squares and other tales,
\textit{J. Chemom}. 23 (7-8) (2009)\textbf{ } 393--405.

\bibitem {comon03}P. Comon, Independent component analysis, a new concept?,
\textit{Signal Process.} Special Issue on Higher Order Statistics,  36 (3) (1994)\textbf{ } 287--314.

\bibitem {comon04}P. Comon, B. Mourrain, Decomposition of quantics in sums of powers of linear forms,
\textit{Signal Process.} Special Issue on Higher Order Statistics,  53 (2) (1996)\textbf{ } 93--107.

\bibitem {comon05}P. Comon, G.H. G, L.H. Lim, B. Mourrain, Symmetric tensors and symmetric tensor rank,
 \textit{SIAM J. Matrix Anal. Appl.}  30(3) (2008) \textbf{ }1254--1279.

\bibitem {lieven01}L. De Lathauwer, J. Castaing, Tensor-based techniques for the blind separation of DS–CDMA signals,
\textit{Signal Process.} 87 (2) (2007)\textbf{ } 322-336.

\bibitem {lieven02}L. De Lathauwer, A survey of tensor, ISCAS, Taipei, 2009.

\bibitem {lieven03}L. De Lathauwer, J. Castaing, J.-F. Cardoso, Fourth-order cumulant-based blind identification of underdetermined mixtures,
 \textit{IEEE Trans. Signal Process.} 55 (6) (2007)\textbf{ } 2965--2973.


\bibitem {lieven04}L. De Lathauwer, A. De Baynast, Blind Deconvolution of DS-CDMA Signals by Means of Decomposition in Rank-$(1, L, L)$ Terms,
 \textit{IEEE Trans. Signal Process.} 56 (4) (2008)\textbf{ }  1562--1571.

\bibitem {lieven05}L. De Lathauwer, Signal processing based on multilinear algebra, Ph.D. thesis, KU Leuven,
E.E. Dept.-ESAT, Belgium, 1997.

\bibitem {lieven06}L. De Lathauwer, B. De Moor, J. Vandewalle, A multilinear singular value decomposition,
\textit{SIAM J. Matrix Anal. Appl.}  21 (4) (2000)\textbf{ }  1253--1278.


\bibitem {lieven07}L. De Lathauwer, B. De Moor, J. Vandewalle, On the best rank-1 and rank-$(R_{1},R_{2},\ldots,R_{N})$ approximation of higher-order tensors,  \textit{SIAM J. Matrix Anal. Appl.} 21 (4) (2000)\textbf{ }  1324--1342.

\bibitem {weiym01}W.Y. Ding, L.Q. Qi, Y.M. Wei, $\mathcal{M}$-tensors and nonsingular $\mathcal{M}$-tensors,
\textit{Linear Algebra Appl.} 439 (10) (2013)\textbf{ }  3264--3278.

\bibitem {weiym02}W.Y. Ding, Y.M. Wei, Solving multi-linear systems with $\mathcal{M}$-tensors,
\textit{J. Sci. Comput.} 68 (2) (2016)\textbf{ } 689--715.

\bibitem {Doostan}A. Doostan, G. Iaccarino, N. Etemadi, A least-squares approximation of high-dimensional uncertain systems,
 Annual Research Briefs, Center for Turbulence Research, Stanford University (2007)\textbf{ }121--132.

\bibitem {einstein}A. Einstein, The foundation of the general theory of relativity. In: Kox AJ, Klein MJ, Schulmann R, editors. The Collected Papers of Albert Einstein 6. Princeton (NJ): Princeton University Press; 2007. p. 146--200.

\bibitem {hamilton}W.R. Hamilton, Elements of quaternions, Longmans Green and Co. London, 1866.

\bibitem {hewangamc2017}Z.H. He, Q.W. Wang, Y. Zhang, Simultaneous decomposition of quaternion matrices involving $\eta$-Hermicity with applications,
\textit{Appl. Math. Comput.}  298 (2017)\textbf{ } 13--35.

\bibitem {wanghe2}Z.H. He, Q.W. Wang, A real quaternion matrix equation with with applications,
\textit{Linear and Multilinear Algebra} 61 (2013)\textbf{ } 725--740.

\bibitem {FZhang3}R.A. Horn, F.Z. Zhang, A generalization of the complex
Autonne-Takagi factorization to quaternion matrices, \textit{Linear and
Multilinear Algebra} 60 (11-12) (2012)\textbf{ }1239--1244.

\bibitem {YDKim01}Y.D. Kim, S. Choi, Nonnegative tucker decomposition,  IEEE Conference on Computer Vision and Pattern Recognition, Minneapolis, MN, USA, 17-22 June 2007.

\bibitem {S. De Leo}S.D. Leo, G. Scolarici, Right eigenvalue equation in
quaternionic quantum mechanics, \textit{J. Phys.} A 33 (2000)\textbf{ } 2971--2995.

\bibitem {liwen01}W. Li, Wen, M. K. Ng, On the limiting probability distribution of a transition probability tensor,
\textit{Linear and Multilinear Algebra} 62 (3) (2014)\textbf{ } 362--385.

\bibitem {limlh01}L.H. Lim, P. Comon, Multiarray signal processing: Tensor decomposition meets compressed sensing,
 \textit{Comptes Rendus M$\acute{e}$canique} 338 (6) (2010) \textbf{ }311--320.

\bibitem {limlh02}L.H. Lim, Singular values and eigenvalues of tensors: a variational approach, in CAMSAP2005: 1st IEEE International Workshop on Computational Advances in Multi-Sensor Adaptive Processing, 2005, 129--132.

\bibitem {limlh03}L.H. Lim, M.K. Ng, L.Q. Qi, The spectral theory of tensors and its applications,
 \textit{ Numer. Linear Algebra Appl}. 20 (6) (2013)\textbf{ } 889--890.

\bibitem {kolda01}T.G. Kolda, B.W. Bader, J.P. Kenny, Higher-order web link analysis using multilinear algebra, in ICDM 2005:
 Proceeding of the 5th IEEE International Conference on Data Mining, IEEE Computer Society Press, 2005, 242--249.

\bibitem {kolda02}T.G. Kolda, Orthogonal tensor decompositions, \textit{SIAM J. Matrix Anal. Appl.} 23 (1) (2001)\textbf{ } 243--255.

\bibitem {kolda03}T.G. Kolda, A countexample to the possibility of an extension of the Eckart-Young low-rank approximation theorem for the
 orthogonal rank tensor decomposition, \textit{SIAM J. Matrix Anal. Appl.} 24 (2003)\textbf{ } 762--767.

 \bibitem {kolda04}T.G. Kolda, B.W. Bader, Tensor decompositions and applications, \textit{SIAM Rev.} 51 (3) (2009)\textbf{ }  455--500.

 \bibitem {lai01}W.M. Lai, D.H. Rubin, E. Krempl, Introduction to continuum mechanics, Butterworth-Heinemann, Oxford, 2009.

 \bibitem {nliu001}N. Liu, B. Zhang, J. Yan, Z. Chen, W. Liu, F. Bai, L. Chien,
 Text representation: From vector to tensor, in ICDM 2005: Proceeding of the 5th IEEE International Conference on Data Mining,
 IEEE Computer Society Press,  725--728.

\bibitem {alter03}C. Muralidhara, A.M. Gross, R.R. Gutell, O. Alter, Tensor decomposition reveals concurrent evolutionary convergences and divergences and correlations with structural motifs in ribosomal RNA, \textit{PloS one}, 6 (4) (2011)\textbf{ } e18768.

 \bibitem {muti01}D. Muti, S. Bourennane, Multidimensional filtering based on a tensor approach,
  \textit{ Signal Process.} 85 (12) (2005) \textbf{ } 2338--2353.

\bibitem {navasca02} C. Navasca, L. De Lathauwer, S. Kindermann, Swamp reducing technique for tensor decomposition, in 16th European Siganl Processing Conference, Lausanne, August 2008, 25--29.


\bibitem {alter01}L. Omberg, G.H. Golub, O. Alter, A tensor higher-order singular value decomposition for integrative analysis of DNA microarray data from different studies,
  \textit{Proc. Natl. Acad. Sci. USA}.   104(47) (2007)\textbf{ } 18371--18376.


\bibitem {alter02}S.P. Ponnapalli, M.A. Saunders, C.F. Van Loan, O. Alter, A higher-order generalized singular value decomposition for comparison of global mRNA expression from multiple organisms, \textit{PloS one}, 6(12) (2011)\textbf{ } e28072.

\bibitem {qi01}L.Q. Qi, Eigenvalues of a real supersymmetric tensor,
 \textit{J. Symb. Comput.} 40 (6) (2005)\textbf{ }1302--1324.


 \bibitem {qi02}L.Q. Qi, Symmetric nonnegative tensors and copositive tensors,
 \textit{Linear Algebra Appl.} 439 (1) (2013)\textbf{ }228--238.


 \bibitem {qi03}L.Q. Qi, G.H. Yu, E.X. Wu, Higher order positive semidefinite diffusion tensor imaging,
  \textit{SIAM J. Imaging Sci.} 3 (3) (2010)\textbf{ } 416--433.

 \bibitem {qi04}L.Q. Qi, Y.S. Song, An even order symmetric B tensor is positive definite,
   \textit{Linear Algebra Appl.}  457 (2014)\textbf{ }  303--312.

 \bibitem {qi06}L.Q. Qi, Y.M. Wei, C.Q. Xu, T. Zhang, Linear algebra and multilinear algebra,
    \textit{Front. Math. China}.  11 (3) (2016)\textbf{ }   509--510.

  \bibitem {qi05}L.Q. Qi, C.Q. Xu, Y. Xu, Nonnegative tensor factorization, completely positive tensors, and a hierarchical elimination algorithm,
  \textit{SIAM J. Matrix Anal. Appl.} 35 (4) (2014)\textbf{ }  1227--1241.

\bibitem {rodman}L. Rodman, Topics in quaternion linear algebra, Princeton University Press, 2014.

\bibitem {bsavas01}B. Savas, Analyses and tests of handwritten digit recognition algorithms, Master;s thesis, Link$\ddot{o}$ping University, Link$\ddot{o}$ping, Sweden, 2003.

\bibitem {bsavas02}B. Savas, L. Eld$\acute{e}$n, Handwritten digit classification using higher order singular value decomposition,
\textit{Pattern recognition} 40 (3) (2007)\textbf{ } 993--1003.

\bibitem {shaojiayu01}J.Y. Shao, A general product of tensors with applications,
\textit{Linear Algebra Appl.} 439 (8) (2013)\textbf{ } 2350--2366.

\bibitem {shaojiayu02}J.Y. Shao, H.Y. Shan, L. Zhang, On some properties of the determinants of tensors,
\textit{Linear Algebra Appl.} 439 (10) (2013) \textbf{ } 3057--3069.

\bibitem {shaojiayu03}J.Y. Shao, L.Q. Qi, S.L. Hu,
Some new trace formulas of tensors with applications in spectral hypergraph theory,
 \textit{Linear and Multilinear Algebra} 63 (5) (2015)\textbf{ } 971--992.

\bibitem {shaojiayu04}J.Y. Shao, L.H. You, On some properties of three different types of triangular blocked tensors,
\textit{Linear Algebra Appl.} 511 (2016)\textbf{ }  110-140.



\bibitem {shashua01}A. Shashua, T. Hazan, Non-negative tensor factorization with applications to statistics and computer vision, in ICML 2005:
 Proceedings of the 22nd international conference on Machine learning, 2005, 792--799.

\bibitem {Sidiropoulos01}N. Sidiropoulos,  R. Bro, G.B. Giannakis, Parallel factor analysis in sensor array processing,
  \textit{IEEE Trans. Signal Process.} 48 (8) (2000)\textbf{ } 2377--2388.

\bibitem {chemical02}A. Smilde, R. Bro, P. Geladi, Multi-way analysis: applications in the chemical sciences,
 John Wiley and Sons, Chichester, UK, 2004.

\bibitem {jsun01}J.M. Sun, S. Papadimitriou, P.S. Yu, Window-based Tensor Analysis on High-dimensional and Multi-aspect Streams,
 in ICDM 2006: Proceeding of the 6th IEEE International Conference on Data Mining,
 IEEE Computer Society Press,  1076--1080.

 \bibitem {Sun}L.Z. Sun, B.D. Zheng, C.J. Bu, Y.M. Wei, Moore-Penrose inverse of tensors via Einstein product,
\textit{Linear and Multilinear Algebra} 64 (2016)\textbf{ } 686--698.

 \bibitem {Took1}C.C. Took, D.P. Mandic, Augmented second-order
statistics of quaternion random signals, \textit{Signal Processing} 91
(2011)\textbf{ } 214--224.


\bibitem {Took3}C.C. Took, D.P. Mandic, Quaternion-valued stochastic
gradient-based adaptive IIR filtering, \textit{IEEE Trans. Signal Process.} 58
(7) (2010)\textbf{ } 3895--3901.

\bibitem {Took4}C.C. Took, D.P. Mandic, F.Z. Zhang, On the unitary
diagonalization of a special class of quaternion matrices, \textit{Appl. Math. Lett.} 24 (2011)\textbf{ } 1806--1809.

  \bibitem {VASILESCU01}M.A.O. Vasilescu, D. Terzopoulos, Multilinear image analysis for facial recognition, in ICPR 2002:
  Proceedings of the 16th International Conference on Pattern Recognition, 2002, 511--514.

    \bibitem {VASILESCU02}M.A.O. Vasilescu, D. Terzopoulos, Multilinear subspace analysis of image ensembles, in CVPR 2003:
  Proceedings of the 2003 IEEE Computer Society Conference on Computer Vision and Pattern Recognition, IEEE Computer Society Press, 2003, 93--99.

 \bibitem {yangyuning02}Q.Z. Yang, Y.N. Yang, Further results for Perron–Frobenius theorem for nonnegative tensors II,
\textit{SIAM J. Matrix Anal. Appl.} 32 (4) (2011)\textbf{ } 1236--1250.

 \bibitem {yangyuning01}Y.N. Yang, Q.Z. Yang, Further results for Perron–Frobenius theorem for nonnegative tensors,
\textit{SIAM J. Matrix Anal. Appl.} 31 (5) (2010)\textbf{ } 2517--2530.

 \bibitem {F. Zhang}F.Z. Zhang, Quaternions and matrices of quaternions,
\textit{Linear Algebra Appl.} 251 (1997)\textbf{ } 21--57.

\bibitem {tzhang01}T. Zhang, G.H. Golub, Rank-one approximation to high order tensors,
\textit{SIAM J. Matrix Anal. Appl.} 23 (2) (2001)\textbf{ }  534--550.

\end{thebibliography}
\end{document}